\def\input@path{{../}{./}}
\newtheorem{remark}[theorem]{Remark}
\newcommand{\norm}[1]{\left\Vert#1\right\Vert}
\newcommand{\tnorm}{\@ifstar\@tnorms\@tnorm}
\newcommand{\@tnorms}[1]{%
\left|\mkern-2.5mu\left|\mkern-2.5mu\left|
#1
\right|\mkern-2.5mu\right|\mkern-2.5mu\right|
}
\newcommand{\@tnorm}[2][]{%
\mathopen{#1|\mkern-2.5mu #1|\mkern-2.5mu #1|}
#2
\mathclose{#1|\mkern-2.5mu #1|\mkern-2.5mu #1|}
}
\newcommand{\jump}[2]{\lbrack\hspace{-1.5pt}\lbrack {#1} 
\rbrack\hspace{-1.5pt}\rbrack_{\raisebox{-2pt}{\scriptsize$#2$}} }
\newcommand{\dd}{\,{\rm d}}
\newcommand{\bs}{\boldsymbol}
\newcommand{\SC}[1]{\textcolor{red}{#1}}
\newcommand{\revision}[1]{\textcolor{black}{#1}}
\newcounter{mnote}
\let\oldmarginpar\marginpar
\renewcommand\marginpar[1]{\-\oldmarginpar[\raggedleft\footnotesize #1]%
{\raggedright\footnotesize #1}}
\numberwithin{theorem}{section}
\numberwithin{equation}{section}
\title{{Anisotropic 
Error Estimates of The Linear Nonconforming Virtual Element Methods
}
\thanks{This paper is based upon work supported by the National Science 
Foundation under Grant No. DMS-1418934.}}
\author{
  Shuhao Cao\thanks{Department of Mathematics, University of California Irvine, 
  Irvine, CA 92697 (\email{scao@math.uci.edu}, \email{chenlong@math.uci.edu}).}
  \and
  Long Chen\footnotemark[2]
}
\date{\today}
\begin{document}

\maketitle

\begin{abstract}
A refined a priori error analysis of the lowest order (linear) nonconforming virtual 
element method (VEM) for approximating a model Poisson problem is developed in both 
two and three dimensions. A set of new geometric assumptions is proposed for the 
shape regularity of polytopal meshes. A new error equation for the lowest order 
(linear) nonconforming VEM is derived for any choice of stabilization, and a new 
stabilization using a projection on an extended element patch is introduced for the 
error analysis on anisotropic elements.
\end{abstract}

\begin{keywords}
Virtual element methods, polytopal finite elements, anisotropic error analysis, 
nonconforming method
\end{keywords}

\begin{AMS}
65N12, 65N15, 65N30, 46E35
\end{AMS}

\section{Introduction}
In this paper, we develop a modified nonconforming virtual 
element method (VEM), together with a new way to perform the a priori error 
analysis for a model Poisson equation. The new analysis incorporates several new 
geometry assumptions on polytopal partitions in both two and three dimensions.

To approximate multiphysics problems involving complex geometrical features using 
finite element method (FEM) in 2-D and 3-D, how to encode these geometric 
information 
into the discretization is a challenge. To a specific problem's interest, common 
practices include either to generate a body/interface-fitted mesh by cutting a 
shape-regular background mesh, or to build cut-aware approximation 
spaces/variational forms (stencils) on the unfitted background mesh. Some notable 
methods utilizing the latter idea include eXtended FEM (e.g., 
see~\cite{Dolbow;Belytschko:1999finite,Sukumar;Moes;Moran;Belytschko:2000Extended}),
fictitious domain FEM~\cite{Haslinger;Renard:2009fictitious}, cut FEM 
\cite{Burman;Claus;Hansbo;Larson:2015CutFEM:}, and immersed FEM 
\cite{Li;Lin;Wu:2003Cartesian}.

One resolution combining the advantages of both approaches in 3-D was  
proposed in~\cite{Chen;Wei;Wen:2017interface-fitted} by using polyhedral meshes 
rather than the tetrahedral ones. It avoids manually tweaking problematic 
tetrahedra like slivers with four vertices nearly coplanar, which is usually an 
unavoidable problem in generating body-fitted mesh from a background mesh, 
especially when the mesh is fine.

Since arbitrary-shaped polygons or polyhedra are now introduced into the partition, 
it requires that the underlying finite element methods can handle these kinds 
of general meshes. There are several classes of modifications of classical numerical 
methods to work on the polytopal meshes including mimetic finite difference 
(MFD)~\cite{Brezzi;Buffa;Lipnikov:2009Mimetic,Beirao-da-Veiga;Lipnikov;Manzini:2014mimetic},
generalized barycentric 
coordinates~\cite{Gillette;Rand;Bajaj:2012Estimates}, compatible discrete operator 
scheme~\cite{Bonelle;Ern:2014Analysis}, composite/agglomerated discontinuous 
Galerkin finite element 
methods (DGFEM)~\cite{Antonietti;Cangiani;Collis;Dong:2016Review}, 
hybridizable discontinuous Galerkin (HDG) 
methods~\cite{Castillo;Cockburn;Perugia;Schotzau:2000priori,Cockburn;Di-Pietro;Ern:2016Bridging},
 hybrid high-order (HHO) methods
\cite{Di-Pietro;Ern;Lemaire:2014arbitrary-order,Di-Pietro;Ern:2015hybrid}, weak 
Galerkin (WG) methods~\cite{Wang;Ye:2014Galerkin,Mu;Wang;Ye:2015Galerkin}, 
discontinuous Petrov-Galerkin 
(PolyDPG) methods~\cite{Astaneh;Fuentes;Mora;Demkowicz:2018High-order}, etc. 
Among them, the virtual element method 
(VEM) introduced in~\cite{Beirao-da-Veiga;Brezzi;Cangiani;Manzini:2012Principles} 
proposed a universal framework for constructing approximation spaces and proving 
optimal order convergence on polytopal elements. Until now VEMs for elliptic 
problems have been developed with elaborated details (e.g., 
see~\cite{Ahmad;Alsaedi;Brezzi;Marini:2013Equivalent,
Beirao-da-Veiga;Brezzi;Marini;Russo:2014hitchhikers,Brezzi;Falk;Marini:2014principles,
Ayuso-de-Dios;Lipnikov;Manzini:2016nonconforming,Cangiani;Manzini;Sutton:2016Conforming,
Beirao-da-Veiga;Brezzi;Marini;Russo:2016Virtual}).

The nonconforming finite element method for elliptic problems, better known as 
Crouzeix-Raviart element, was introduced in~\cite{Crouzeix;Raviart:1973Conforming}.  
It is nonconforming in the sense that the approximation polynomial space is not a 
subspace of the underlying Sobolev space corresponding to the continuous weak 
formulation. Its VEM counterpart was constructed 
in~\cite{Ayuso-de-Dios;Lipnikov;Manzini:2016nonconforming}. The degrees of freedom 
(DoFs) of a nonconforming VEM function on an element $K$ are the 
natural dual to this function's values according to a Neumann boundary 
value problem on $K$, which are induced by the integral by parts. 
When a locally constructed stabilization term satisfies the patch test, the 
convergence in 
broken $H^1$-seminorm is obtained through a systematized approach by showing the 
norm equivalence for the VEM 
functions between the broken Sobolev norm and the norm induced by the bilinear 
form~\cite{Ayuso-de-Dios;Lipnikov;Manzini:2016nonconforming}.

Establishing the norm equivalence above requires geometric constraints on the shape 
regularity of the mesh. Almost all VEM error analyses to date are performed on 
star-shaped elements, and the mostly used assumptions are 
(1) every element $K$ and every face $F\subset \partial K$ are star-shaped with the 
chunkiness parameter uniformly bounded above; 
(2) no short edge/small face, i.e., $h_F \eqsim h_K$ for every face $F\subset 
\partial K$.  In the former condition, the so-called 
chunkiness parameter of a star-shaped domain $E$ is the ratio of the diameter of $E$ 
over 
the radius of the largest inscribed ball with respect to which $E$ is star-shaped, 
which may become unbounded for anisotropic elements or anisotropic faces in 3-D 
star-shaped elements.

\revision{Recently, some refined VEM error analyses 
(see~\cite{Beirao-da-Veiga;Lovadina;Russo:2017Stability,Brenner;Sung:2018Virtual}) 
have removed the ``no short edge'' assumption in the 2-D conforming VEM by 
introducing a new tangential derivative-type stabilization first proposed in 
\cite{Wriggers;Rust;Reddy:2016virtual}. 
In the 3-D case~\cite{Brenner;Sung:2018Virtual}, the removal of the ``no small 
face'' comes at a price in that the convergence constant depends on the log 
of the ratio of the longest edge and the shortest edge on a face of a polyhedral 
element, which also appears in the 2-D analysis using the traditional DoF-type 
stabilization.} This factor seems non-removable due to the norm equivalence being 
used in these approaches, and it 
excludes anisotropic elements and/or \revision{isotropic} elements with anisotropic 
faces with high aspect ratios in 3-D (e.g. see Figure~\ref{fig:anisoelem}). However, 
in a variety of 
numerical tests, some of which even use the traditional stabilization 
that is suboptimal in theory, VEM performs robustly 
regardless of these seemingly artificial geometric constraints in situations 
like random-control-points Voronoi meshes, irregular 
concave meshes, a polygon degenerating to a line, interface/crack-fitted meshes (see 
\cite{Beirao-da-Veiga;Brezzi;Marini;Russo:2016Virtual,
Beirao-da-Veiga;Dassi;Russo:2017High-order,
Benedetto;Berrone;Pieraccini;Scialo:2014virtual,
Berrone;Borio:2017Orthogonal,Chen;Wei;Wen:2017interface-fitted,
Dassi;Mascotto:2018Exploring,Mascotto:2018Ill-conditioning}). Especially, 
anisotropic elements and/or elements with anisotropic faces pose no 
bottlenecks to the convergence of VEM numerically.

In an effort to partially explain the robustness of VEM regarding the shape 
regularity of the mesh, in~\cite{Cao;Chen:2018Anisotropic}, an a priori error 
analysis for the lowest order conforming VEM is conducted based on a mesh dependent 
norm $\tnorm{\cdot}$ induced by the bilinear form, which is weaker than 
$H^1$-seminorm. The main instrument is an error equation similar to the ones used in 
the error analysis in Discontinuous Galerkin (DG)-type methods, thus bypassing the 
norm equivalence. In this way, less geometric constraints are required than the 
error analysis using the norm 
equivalence. However, results in~\cite{Cao;Chen:2018Anisotropic} are restricted to 
2-D, and the anisotropic error analysis is restricted to a special class of elements 
cut from a shape regular mesh. In particular, long edges in an anisotropic element 
are required to be paired in order to control the interpolation error in different 
directions. A precise quantitative characterization of such anisotropic meshes, on 
which the analysis can be applied, is not explicitly given 
in~\cite{Cao;Chen:2018Anisotropic}. 

In this paper, we follow this approach, and derive an error equation for the 
lowest order nonconforming VEM. Thanks to the natural definition of DoFs, the 
nonconforming interpolation defined using DoFs brings no 
error into the error estimate in the sense that $\tnorm{u-u_I} = 0$, compared with 
the error estimates of the conforming interpolant being proved using an intricate 
edge-pairing technique in~\cite{Cao;Chen:2018Anisotropic}. As a result, under 
geometric conditions introduced in~\cite{Cao;Chen:2018Anisotropic}, the
anisotropic error analysis can be extended to the lowest order nonconforming VEM in 
both two and three dimensions. 

The findings in this paper strengthens our 
opinion: one of the reasons why VEM is immune to badly shaped elements is that the 
approximation to the gradient of an $H^1$-function is handled by the projection of 
the gradient of a VEM function, not the exact gradient of it. On the other hand, the 
flexibility of the VEM framework allows 
us to modify the stabilization in two ways from the 
one used in~\cite{Ayuso-de-Dios;Lipnikov;Manzini:2016nonconforming} tailored for the 
anisotropic elements: 
(1) the weight is changed from the size of each face, respectively, to the diameter 
of an extended element patch; (2) the stabilization stencil enlarges to this 
extended element patch, and its form remains the same with the 
original DoF-type integral, in which the penalization computes now the difference of 
the VEM functions and their projections onto this extended element patch, not the 
underlying anisotropic element.
In this way, the anisotropic elements can be integrated into the analysis naturally 
using the tools improved from the results 
in~\cite{Wang;Ye:2014Galerkin,Li;Melenk;Wohlmuth;Zou:2010Optimal}, and an 
optimal order convergence can be proved in this mesh dependent norm $\tnorm{\cdot}$. 
\revision{Our stabilization has the same spirit as the so-called ghost penalty 
method introduced in~\cite{Burman:2010penalty} for fictitious domain methods.}

When extending the geometric conditions in~\cite{Cao;Chen:2018Anisotropic} from 2-D 
to 3-D in Section \ref{sec:geometry}, some commonly used tools in finite element 
analysis, including various trace 
inequalities and Poincar\'e inequalities, for simplexes are revisited for polyhedron 
elements. The conditions these inequalities hold serve as a motivation to propose a 
set of constraints as minimal as possible on the shapes of elements. In this regard, 
Assumptions \hyperref[asp:B]{{\bf B--C}} are proposed with more 
local geometric conditions than the star-shaped condition, which in our opinion is a 
more ``global''-oriented condition for a certain element. Moreover, the hourglass 
condition in Assumption \hyperref[asp:C]{{\bf C}} allows the approximation on 
``nice'' hourglass-shaped elements, which further relaxes a constraint in the 
conforming case in~\cite{Cao;Chen:2018Anisotropic} in which vertices have to be 
artificially added to make hourglass-shaped elements isotropic. 

As mentioned earlier, the way to deal with an anisotropic element is to assume one 
can embed this element into an isotropic extended element patch in Assumption 
\hyperref[asp:D]{{\bf D}}.  However the current analysis forbids the existence of 
a cube/square being cut into thin slabs, in which the number of cuts $\to \infty$ 
when $h\to 0$. \revision{From the standpoint of the implementation, the total number 
of the anisotropic elements cannot make up a significant portion of all elements in 
practice, as the enlarged stencil for the modified stabilization makes the stiffness 
matrix denser.}

This paper is organized as follows: In Section \ref{sec:ncvem}, the 
linear nonconforming VEM together with our modification are introduced. Section 
\ref{sec:geometry} discusses the aforementioned set of new geometric assumptions in 
2-D and 3-D. In Section \ref{sec:error}, we derive a new error equation and an 
\textit{a priori} error bound for the linear nonconforming VEM. Lastly in Section 
\ref{sec:remark}, we study how to alter the assembling procedure in the 
implementation. 

For convenience, $x \lesssim y$ and $z \gtrsim w$ are used to represent $x \leq c_1 
y$ and $z \geq c_2 w$ respectively, and $a \eqsim b$ means  $a \lesssim b$ and $a 
\gtrsim b$. The constants involved are independent of the mesh size $h$. When there 
exists certain dependence of these relations to certain geometric properties, then 
such dependence shall be stated explicitly.

\section{Nonconforming Virtual Element Methods}
\label{sec:ncvem}
In this section we shall introduce the linear nonconforming virtual element space 
and corresponding discretization of a model Poisson equation. In order to deal with 
anisotropic elements, we shall propose a new stabilization term.

Let $\Omega$ be a bounded polytopal domain in $\mathbb{R}^d$ $(d=2,3)$, consider the 
model Poisson equation in the weak form with data $f\in 
L^2(\Omega)$: to find $u\in H_0^1(\Omega)$ such that
\begin{equation}
\label{eq:model-weak}
a(u,v) :=(\nabla u, \nabla v) = (f, v)\quad \forall v\in H_0^1(\Omega).
\end{equation}

Provided with the mesh satisfying the assumptions to be discussed in Section 
\ref{sec:geometry}, the goal of this subsection is to build the following 
discretization using a bilinear form $a_h(\cdot,\cdot)$ in a VEM approximation 
space $V_h$ on a given mesh $\mathcal{T}_h$, which approximates the original 
bilinear form $a(\cdot,\cdot)$:
\begin{equation}
\label{eq:dis}
\text{To find }\; u_h\in V_h, \; \text{ such that} \;\; a_h( u_h, v_h) = 
\langle f, v_h\rangle \quad \forall v_h \in 
V_h,
\end{equation}
where $\langle f, v_h\rangle \approx (f, v_h)$.
 
\subsection{Notation}
Throughout the paper the standard notation $(\cdot,\cdot)_D$ are used to denote the 
$L^2$-inner product on a domain/hyperplane $D$, and the subscript is omitted when 
$D = \Omega$. For every geometrical object $D$ and for every 
integer $k \geq 0$, $\mathbb{P}_k(D)$ denotes the set of polynomials of degree 
$\leq k$ on 
$D$. The average of an 
$L^1$-integrable function or vector field $v$ over $D$, endowed with the usual 
Lebesgue measure, is denoted by: $\overline{v}^{D} = |D|^{-1}\int_{D} 
v$, where $|D| = \operatorname{meas}(D)$.

To approximate problem \eqref{eq:model-weak}, firstly $\Omega$ is partitioned into 
a polytopal mesh $\mathcal{T}_h$, each polytopal element is either a simple polygon 
($d=2$) or a simple polyhedron ($d=3$). The set 
of the elements contained in a subset $D\subset \Omega$ is denoted by 
$\mathcal{T}_h(D):=\{K\in \mathcal{T}_h: K\subset \bar D \}$.
$h := \max\limits_{K\in \mathcal{T}_h} h_K$ 
stands for the mesh size, with $h_D:= \operatorname{diam} D$ for any bounded 
geometric object $D$. Denote $\operatorname{conv}(D)$ be the convex 
hull of $D$. The term ``face'' $F$ is usually used to refer to the 
$(d-1)$-flat face of a $d$-dimensional polytope in this partition ($d=2,3$). For 
$d=2$ case, a face refers to an edge unless being otherwise specifically stated. The 
set of all the faces in $\mathcal T_h$ is denoted by $\mathcal{F}_h$. The 
set of the face $F$ on the boundary of an element $K$ is denoted by 
$\mathcal{F}_h(K)$, and $n_K:= |\mathcal{F}_h(K)|$ is the number of faces on the 
boundary of $K$. More generally $\mathcal 
F_h(D):=\{F\in \mathcal{F}_h: F\subset \bar{D} \}$ denotes faces restricted to a 
bounded domain $D$. With the help from the context, $\bm{n}_F$ denotes the outward 
unit normal vector of face $F$ with 
respect to the element $K$. An interior face $F\in \mathcal{F}_h$ is shared by two 
elements $K^{\pm}$. For any function $v$, define the jump 
of $v$ as $\jump{v}{F} = v^- - v^+$ on $F$, where $v^{\pm} = 
\lim\limits_{\epsilon\to 0}v(\bm{x}-\epsilon 
\bm{n}_{F}^{\pm})$, and $\bm{n}_{F}^{\pm}$ represents the outward unit normal vector 
respect to $K^{\pm}$. For a boundary face $F\subset \partial \Omega$, $\jump{v}{F} 
:= v|_F$. 

For a bounded Lipschitz domain $D$, $\norm{\cdot}_{0,D}$ denotes the $L^2$-norm, and 
$|\cdot|_{s,D}$ is the $H^s(D)$-seminorm. Again when $D = \Omega$ being the whole 
domain, the subscript $\Omega$ will be omitted. 

\subsection{Nonconforming VEM spaces}
The lowest order, i.e., the 
linear nonconforming VEM~\cite{Ayuso-de-Dios;Lipnikov;Manzini:2016nonconforming}, is 
the main focus of this article. The 
linear nonconforming VEM has rich enough content to demonstrate anisotropic meshes' 
local impact on the a priori error analysis, and yet elegantly simple enough to be 
understood without many technicalities. Our main goal is to develop the tools 
for the linear nonconforming VEM to improve the anisotropic error analysis for the 
VEM.

The lowest order nonconforming virtual element space $V_h$, restricted 
on an element $K$, can be defined as 
follows~\cite{Ayuso-de-Dios;Lipnikov;Manzini:2016nonconforming}:
\begin{equation}
\label{eq:space-local}
V_h(K):= \bigl\{v\in H^1(K): \quad 
\Delta v = 0\text{ in } K,\, \nabla v\cdot \bm{n}\big\vert_F\in 
\mathbb{P}_{0}(F),\forall F\in \mathcal{F}_h(K) 
\bigr\}.
\end{equation}
The degrees of freedom (DoFs) for the local space $V_h(K)$ is the average  
of $v_h\in V_h(K)$ on every face $F\in \mathcal{F}_h(K)$:
\begin{equation}
\label{eq:dofs-face}
\chi_F(v_h)  = \frac{1}{|F|}\int_{F} v_h \,\dd S.
\end{equation}
Denote by this set of DoFs by $\mathcal N(K) = \{ \chi_F, F\in \mathcal{F}_h(K) \}$ 
with 
cardinality $|\mathcal N(K)| = n_K$, then one can easily verify that $(K, V_h(K), 
\mathcal N(K))$ forms a finite element triple in the sense of 
Chapter 2.3 in~\cite{Ciarlet:1978Finite} 
(see~\cite{Ayuso-de-Dios;Lipnikov;Manzini:2016nonconforming}). 

The global nonconforming VEM space $V_h$ can be then defined as:
\begin{equation}
\label{eq:space-global}
V_h = \bigl\{ v\in L^2(\Omega): \;
v\big\vert_{K} \in V_h(K), \;\forall K\in \mathcal{T}_h, \;\int_F \jump{v}{F} \dd S 
= 0, \; \forall 
F\in \mathcal{F}_h \bigr\}.
\end{equation}
The canonical interpolation 
$v_I\big\vert_K\in V_h(K)$ in the nonconforming VEM local space of $v\in H^1(K)$ is 
defined using the DoFs:
\begin{equation}
\label{eq:intp-loc}
\chi_F(v) = \chi_F(v_I),   \quad \forall F\in \mathcal{F}_h(K),
\end{equation}
and the canonical interpolation $v_I\in V_h$ is then defined using the global DoFs: 
\begin{equation}
\label{eq:intp}
\chi_F(v) = \chi_F(v_I),  \quad \forall F\in \mathcal{F}_h.
\end{equation}

\subsection{Local projections}
The shape functions in $V_h(K)$ do not have to be formed explicitly in assembling 
the stiffness matrix. Based on the construction in 
\eqref{eq:space-local}, locally on an 
element $K$, a certain shape function is the solution to a Neumann boundary value 
problem, the exact pointwise value of which is unknown. Instead, for $u_h,v_h\in 
V_h(K)$, some computable quantities based on the DoFs of $u_h$ and $v_h$ are used to 
compute $a_h(u_h,v_h)$, which approximates the original 
continuous bilinear form $a(u_h,v_h)$. We now explore what quantities can be 
computed explicitly using DoFs.

First of all, the $L^2$-projection $Q_F: v\mapsto Q_F v \in \mathbb{P}_{0}(F)$ for 
any $v\in L^1(F)$ to piecewise constant space on a face $F$ is defined as:
\begin{equation}
\label{eq:pr-L2}
\bigl(v - Q_F v, q\bigr)_F = 0, \quad \forall q \in \mathbb{P}_{0}(F).
\end{equation}
For a VEM function $v_h\in V_h(K)$, this projection can be directly derived from the 
DoFs \eqref{eq:dofs-face}, since $Q_F(v_h) =\chi_F (v_h)$ by definition. 
In contrast, the $L^2$-projection $Q_K: L^1(K) \to \mathbb P_0(K)$:
\begin{equation}
\label{eq:pr-L2K}
\bigl(v - Q_K v, q\bigr)_K = 0, \quad \forall q \in \mathbb{P}_{0}(K).
\end{equation}
 is not computable for $v_h\in V_h(K)$ by using only the DoFs of $v_h$.

On an element $K$, we can also compute an elliptic projection to the linear 
polynomial space: for any $v \in 
H^1(K)$, $\Pi_K v \in \mathbb{P}_1(K)$ satisfies
\begin{equation}
\label{eq:pr}
\left(\nabla \Pi_K v , \nabla q\right)_K = \left(\nabla v, \nabla 
q\right)_K, \quad 
\text{for all } q \in \mathbb{P}_1(K).
\end{equation}
By choosing $q = x_i, i=1,\ldots, d,$ one can easily verify $\nabla \Pi_K v = 
Q_K(\nabla u)$. Namely $\nabla \Pi_K v$ is the best constant approximation of 
$\nabla u$ in $K$.

As $H^1$-semi-inner product is used in \eqref{eq:pr}, $\Pi_K v$ is unique up to a 
constant. The constant kernel will be eliminated by 
the following constraint:
\begin{equation}
\label{eq:pr-constraint}
\begin{aligned}
\int_{\partial K} \Pi_K v \dd S= \int_{\partial K} v \dd S = \sum_{F\in 
\mathcal{F}_h(K)}\,\chi_F(v) |F|.
\end{aligned}
\end{equation}

Using integration by part, and the fact $\Delta q = 
0$, $\nabla q$ being constant for $q\in \mathbb P_1(K)$, the right hand side of 
\eqref{eq:pr} can be written as
\begin{equation}
\label{eq:rhspr}
\left(\nabla v, \nabla q\right)_K = (v, \nabla q\cdot \bs n)_{\partial K} = 
\sum_{F\in \mathcal{F}_h(K)}\nabla q\cdot \bs n_F\,\chi_F(v) |F|.
\end{equation}
Thus for a VEM function $v_h\in V_h(K)$, $\Pi_Kv_h$ can be computed by the DoFs of 
$v_h$.

The following lemma shows that $\Pi_K$ mapping depends only on DoFs. In this 
regards, the elliptic projection 
$\Pi_K$ works in a more natural way for nonconforming VEM local space, 
thanks to the choice of DoFs being the natural dual from the integration by parts.
\begin{lemma}
\label{lemma:pr-id}
For $v,w\in H^1(K)$, where $K\in \mathcal{T}_h$, if for all $F\in \mathcal{F}_h(K)$,
$\chi_F(v) = \chi_F (w)$, then $\Pi_Kv= \Pi_Kw$.
\end{lemma}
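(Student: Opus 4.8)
The plan is to observe that both relations defining $\Pi_K$ — the variational identity \eqref{eq:pr} and the normalization \eqref{eq:pr-constraint} — depend on the argument $v$ only through its face averages $\chi_F(v)$. This is exactly the content of the integration-by-parts rewriting \eqref{eq:rhspr}: for every $q\in\mathbb{P}_1(K)$,
\[
\left(\nabla v,\nabla q\right)_K=\sum_{F\in\mathcal{F}_h(K)}\nabla q\cdot\bs n_F\,\chi_F(v)\,|F|,
\]
while the constraint reads $\int_{\partial K}\Pi_K v\dd S=\sum_{F\in\mathcal{F}_h(K)}\chi_F(v)|F|$. Since by hypothesis $\chi_F(v)=\chi_F(w)$ for every $F\in\mathcal{F}_h(K)$, the right-hand sides of both relations are unchanged when $v$ is replaced by $w$. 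Hence $\Pi_K v$ and $\Pi_K w$ are characterized by one and the same linear system, and uniqueness of its solution forces $\Pi_K v=\Pi_K w$.

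To make the uniqueness step explicit, I would set $p=\Pi_K v$ and $r=\Pi_K w$, both in $\mathbb{P}_1(K)$. Subtracting the two instances of \eqref{eq:pr} and applying the rewriting above together with $\chi_F(v)=\chi_F(w)$ yields $\left(\nabla(p-r),\nabla q\right)_K=0$ for all $q\in\mathbb{P}_1(K)$. Choosing $q=p-r$ gives $|p-r|_{1,K}^2=0$, so $\nabla(p-r)\equiv 0$ and $p-r$ is a constant on $K$. Finally, subtracting the two instances of the constraint \eqref{eq:pr-constraint} gives $\int_{\partial K}(p-r)\dd S=\sum_{F\in\mathcal{F}_h(K)}\bigl(\chi_F(v)-\chi_F(w)\bigr)|F|=0$; since $p-r$ is constant, this integral equals $(p-r)\,|\partial K|$, which forces the constant to vanish. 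Therefore $p=r$, that is, $\Pi_K v=\Pi_K w$.

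There is no genuine obstacle here: the substance of the lemma is already contained in the fact, established via \eqref{eq:rhspr} and \eqref{eq:pr-constraint}, that the elliptic projection is computable from the DoFs alone. The only point deserving slight care is the two-stage uniqueness argument — first killing the gradient to reduce $p-r$ to a constant, then using the boundary-average constraint to eliminate that constant — which is precisely the reason why \eqref{eq:pr} determines $\Pi_K$ only up to an additive constant that the normalization \eqref{eq:pr-constraint} subsequently fixes.
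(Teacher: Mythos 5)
Your proof is correct and follows essentially the same route as the paper, which disposes of the lemma in one line as ``a direct consequence of definition of $\Pi_K$ in view of \eqref{eq:pr-constraint}--\eqref{eq:rhspr}.'' You have simply spelled out the uniqueness argument the paper leaves implicit: the right-hand sides of \eqref{eq:pr} (via \eqref{eq:rhspr}) and of the constraint \eqref{eq:pr-constraint} depend only on the DoFs $\chi_F$, and the difference of the two projections is killed first in gradient and then in its constant part.
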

\begin{proof}
This is a direct consequence of definition of $\Pi_K$ in view of 
\eqref{eq:pr-constraint}-\eqref{eq:rhspr}. 
\end{proof}

To incorporate the possibility of the 
anisotropic analysis, we shall define an extended element patch containing $K$
$$\omega_K:={\textstyle \bigcup}_{\alpha\in \Lambda} K_{\alpha}$$ where  
$\Lambda = 
\Lambda(K)$ is an index set related to $K$ such that $K \subseteq \omega_K$, 
$K_{\alpha}\in \mathcal{T}_h$ for all $\alpha\in 
\Lambda$, and $\omega_K$ is 
isotropic in the sense of Assumption \hyperref[asp:A]{\bf A--B--C} that 
shall be elaborated in Section \ref{sec:geometry}; for example, see Figure 
\ref{fig:cutelem}. When $K$ 
itself is isotropic, $\omega_K = K$. 
\begin{figure}[h]
  \centering
  \begin{subfigure}[b]{0.3\linewidth}
    \centering\includegraphics[width=90pt]{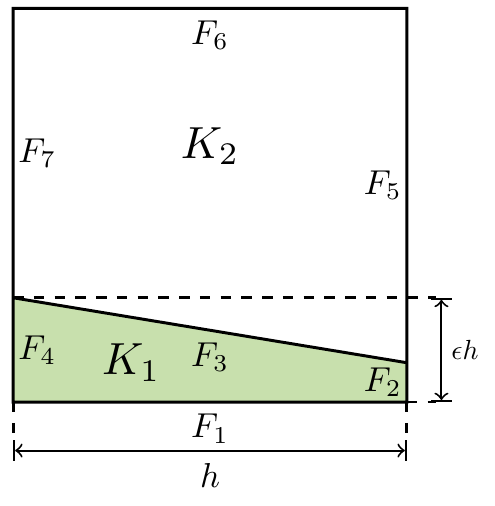}
    \caption{\label{fig:cutelem}}
  \end{subfigure}%
\quad 
\begin{subfigure}[b]{0.33\linewidth}
      \centering\includegraphics[width=110pt]{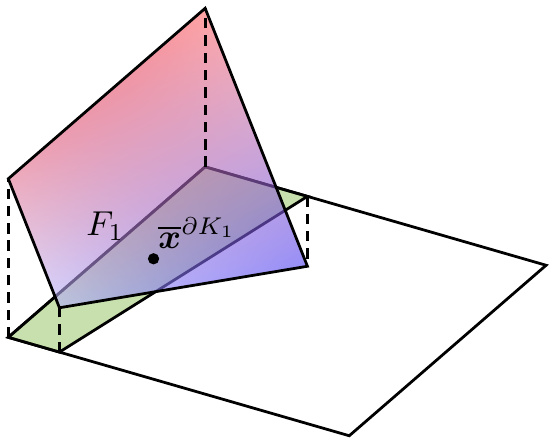}
      \caption{\label{fig:projK}}
    \end{subfigure}
  \begin{subfigure}[b]{0.33\linewidth}
    \centering
    \centering\includegraphics[width=110pt]{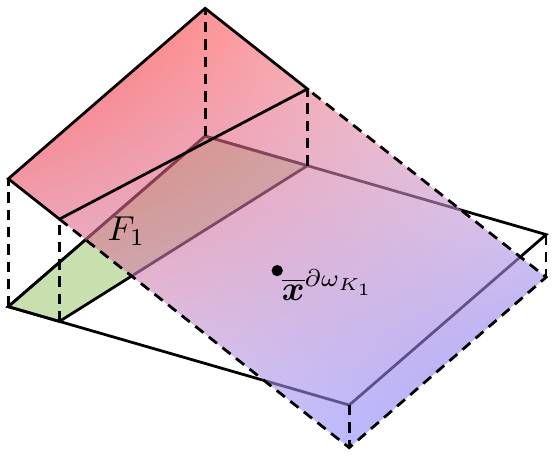}
    \caption{\label{fig:projomega}}
  \end{subfigure}
\vspace{-10pt}
\caption{An illustration of the extended element patch and the elliptic projections 
on it. As $h\to 0$, $\epsilon\to 0$. (\subref{fig:cutelem}) $K_1$ is anisotropic 
and $\omega_{K_1} = K_1\cup K_2$ is isotropic. (\subref{fig:projK}) 
$\Pi_{K_1}\phi_{F_1}$ in \eqref{eq:pr} has sharp 
gradient. (\subref{fig:projomega}) $\Pi_{\omega_{K_1}}\phi_{F_1}$ in 
\eqref{eq:pr-pw} has smoother 
gradient over $\omega_{K_1}$ and is used only in the stabilization term on $\partial 
K_1$, not on $\partial K_2$.}
\label{fig:proj-example}
\end{figure}

We define a discrete $H^1$-type projection on $\omega_K$ as follows: given 
a $v_h \in V_h$
\begin{equation}
\label{eq:pr-pw}
\left(\nabla  \Pi_{\omega_K}v_h, \nabla q\right)_{\omega_K}
= \sum_{K\in \mathcal{T}_h(\omega_K)} \left( \nabla v_h, \nabla q\right)_{K}, 
\quad \forall  q\in \mathbb{P}_1(\omega_K).
\end{equation}
Notice here by the continuity condition in \eqref{eq:space-global}, it is 
straightforward to verify that using 
the integration by parts, for $q\in \mathbb{P}_1(\omega_K)$, on any $F\in 
\mathcal{F}_{h}(\omega_K)$, $\nabla q\cdot \bm{n}\big\vert_F \in \mathbb{P}_0(F)$, 
and $\Delta q = 0$, we have
\begin{equation*}
\begin{aligned}
& \sum_{K\in \mathcal{T}_h(\omega_K)} \left( \nabla v_h, \nabla q\right)_{K}
=\sum_{K\in \mathcal{T}_h(\omega_K)} \left( v_h, \nabla q\cdot 
\bm{n}\right)_{\partial K}
\\
=\! & \sum_{F\in \mathcal{F}_{h}(\partial \omega_K)}
\left( v_h, \nabla q\cdot \bm{n}\right)_{F}
+ \sum_{\substack{F\in \mathcal{F}_{h}(\omega_K),\\F\not\subset \partial 
\omega_K}}\left( \jump{v_h}{F}, \nabla q\cdot \bm{n}\right)_{F}
=\sum_{F\in \mathcal{F}_{h}(\partial \omega_K)}  \left( v_h, \nabla q\cdot 
\bm{n}\right)_{F}, 
\end{aligned}
\end{equation*}
of which the right hand side can be evaluated using the DoFs of $v_h$ similar to 
\eqref{eq:rhspr}.
When $K\neq \omega_K$, the constraint for $\Pi_{\omega_K}$, as well as for 
$\Pi_{K}$ (cf. \eqref{eq:pr-constraint}), is chosen as the average on the 
boundary of $\omega_K$: 
for $v_h \in V_h$
\begin{equation}
\label{eq:pr-constraint-aniso}
\int_{\partial \omega_K} \Pi_{K} v_h \dd S  = \int_{\partial \omega_K} 
\Pi_{\omega_K} v_h \dd S = \int_{\partial \omega_K} v_h \,{\rm d} S,
\end{equation}
which are both computable using DoFs of $v_h$. 

In summary, although we do not have access to the pointwise value of $v_h\in 
V_h(K)$, we can find its average on each face and a linear polynomial $\Pi_K v_h$ 
inside $K$, whose gradient is the best piecewise constant approximation of the 
element-wise gradient of $v_h$. When needed, we can compute another linear 
polynomial 
$\Pi_{\omega_K} v_h$ on an extended patch $\omega_K$ (e.g., see Figure 
\ref{fig:projomega}), \revision{the implementation 
details of which we refer the reader to Section \ref{sec:remark}.}

\subsection{Discretization}
As the $H^1$-projection, $\bigl(\nabla \Pi_K u_h, \nabla \Pi_K v_h\bigr)_K$ is a 
good approximation of $(\nabla u_h, \nabla v_h)_K$. However, $\bigl(\nabla \Pi_K 
u_h, \nabla \Pi_K v_h\bigr)_K$ alone will not lead to a stable method as 
$|\ker(\Pi_K)|  = \dim (V_h(K)) - \dim \mathbb P_1(K)\geq 0$ and the equality holds 
only if $K$ is a simplex. The so-called stabilization term is needed to have a 
well-posed discretization. The principle of designing a stabilization is 
two-fold~\cite{Beirao-da-Veiga;Brezzi;Cangiani;Manzini:2012Principles}:  
\begin{enumerate}
\item Consistency. $S_K(u,v)$ should vanish when either $u$ or 
$v$ is in $\mathbb P_1(K)$. This can be ensured to use the slice operator 
$(\operatorname{I}-\Pi_K)$ in the inputs of $S_K(\cdot,\cdot)$ beforehand. 

\item Stability and continuity. $S_K(\cdot,\cdot)$ is chosen so that the following 
norm equivalence holds
\begin{equation}\label{eq:normequivalence}
a(v, v) \lesssim a_h(v, v) \lesssim a(v, v)  \quad \forall v\in V_h.
\end{equation}
\end{enumerate}

The original bilinear form used in 
\cite{Ayuso-de-Dios;Lipnikov;Manzini:2016nonconforming} 
for problem \eqref{eq:dis} is: for $u_h,v_h\in V_h$
$$
a_h^{\text{orig}}(u_h,v_h) := \sum_{K\in \mathcal  T_h} 
\bigl(\nabla \Pi_K u_h, \nabla \Pi_K v_h\bigr)_K + 
\sum_{K\in \mathcal  T_h} 
S_{K}^{\text{orig}}\bigl((\mathrm{I}-\Pi_K) u_h,(\mathrm{I}-\Pi_K) v_h\bigr),
$$
where the stabilization term $S_{K}^{\text{orig}}(\cdot,\cdot)$ penalizes the 
difference between 
the VEM space and the polynomial projection using DoFs \eqref{eq:dofs-face}, while 
gluing the local spaces together using a weak continuity 
condition in \eqref{eq:space-global}: 
for $u_h,v_h\in V_h$
\begin{equation}
\label{eq:sta-orig}
S_K^{\text{orig}} (u_h,v_h) := \sum_{F\in \mathcal{F}_h(K)} h_F^{d-2}\chi_F(u_h) 
\chi_F(v_h)
\end{equation}
The dependence of constants in the norm equivalence \eqref{eq:normequivalence} to 
the geometry of the element $K$ is, however, not carefully studied in literature. 
Especially on anisotropic elements, constants hidden in \eqref{eq:normequivalence} 
could be very large. In 2D and the 3D case when every face $F\in \mathcal{F}_h(K)$ 
is shape-regular, we have the following relation:
\begin{equation}
S_K^{\text{orig}} (u_h,v_h) \eqsim \sum_{F\in \mathcal{F}_h(K)} h_F^{-1} \bigl(Q_F 
u_h, Q_F v_h\bigr)_F. 
\end{equation}
Inspired by this equivalence, we shall use a modified bilinear form: for 
$u_h,v_h\in V_h$
\begin{equation}
\label{eq:bilinear-aniso}
a_h(u_h,v_h) :=  \sum_{K\in \mathcal  T_h} 
\Bigl\{\bigl(\nabla \Pi_K u_h, \nabla \Pi_K v_h\bigr)_K + 
\underbrace{S_{K}\bigl(u_h-\Pi_{\omega_K} u_h,v_h- \Pi_{\omega_K} 
v_h\bigr)}_{(\mathfrak{s})}\Bigr\}.
\end{equation}
In \eqref{eq:bilinear-aniso}, the stabilization on element $K$ is 
\begin{equation}
\label{eq:sta-PiomegaK}
(\mathfrak{s})
:= \sum_{F\in \mathcal{F}_h(K)} h_{\omega_K}^{-1} \bigl(Q_F 
(u_h-\Pi_{\omega_K} u_h), Q_F (v_h- \Pi_{\omega_K} v_h)\bigr)_F,
\end{equation}
which penalizes the difference between a VEM 
function with its projection $\Pi_{\omega_K}$ on the boundary of $K$. To allow faces 
with small $h_{F}$, the weight is changed to $h_{\omega_K}^{-1}$ as well.

Now a nonconforming VEM discretization of~\eqref{eq:model-weak} is: for the bilinear 
form \eqref{eq:bilinear-aniso}, find $u_h\in V_h$ such that
\begin{equation}
\label{eq:VEM}
a_h( u_h, v_h) = \displaystyle\sum_{K\in \mathcal{T}_h} 
\bigl(f, \Pi_K v_h\bigr)_K  \quad \forall v_h \in V_h.
\end{equation} 
In Section \ref{sec:error} we shall derive a general error equation for the 
difference of the VEM approximation $u_h$ to the interpolation $u_I$ under the 
bilinear form induced norm, and present an a priori error bound.

\section{Geometric Assumptions and Inequalities}
\label{sec:geometry}
In this section, we explore some constraints to put on the meshes $\mathcal{T}_h$ in 
order that problem \eqref{eq:VEM} yields a sensible a priori error estimate.

An element $K\in \mathcal{T}_h$ shall be categorized into either 
``isotropic'' or ``anisotropic'' using some of the following assumptions on the 
geometry of the mesh. In the following assumptions, the uniformity of the constants 
is with respect to the mesh size $h \to 0$ in a family of meshes 
$\{\mathcal{T}_h\}$.

\subsection{Isotropic elements}
Firstly, recall that $n_{K}$ represents the number of faces as well as the number of 
DoFs in the element $K$. For both isotropic or anisotropic elements, the following 
assumption shall be fulfilled. 
\smallskip
\begin{itemize}

\item[{\bf A}.] \label{asp:A} For $K\in \mathcal T_h$, the number of faces 
$n_{K}$ is uniformly bounded.

\end{itemize}
\smallskip

Secondly, for a simple polygon/polyhedron that is not self-intersecting, a height 
$l_F$, measuring how far from $F$ one can advance to the interior 
of $K$ in its inward normal direction, determines to what degree of smoothness a 
function defined on $F$ can be extended into the interior of $K$. 

Without loss of generality, the presentation is based on the dimension $d=3$ here, 
after which the case $d=2$ follows naturally. For a given flat face 
$F\in\mathcal{F}_h(K)$, we choose a local Cartesian coordinate $(\xi,\eta,\tau)$ 
such that the face $F$ is on the $\tau =0$ plane. For any $x_F\in F$, $\bm{x}_F 
=  \xi \bm{t}_{F,1} + \eta\bm{t}_{F,2}$, where $\bm{t}_{F,1}$ 
and $\bm{t}_{F,2}$ are two orthogonal unit vectors that span the hyperplane the face 
$F$ lies on. 

The positive $\tau$-direction is chosen such that it is the inward 
normal of $F$. Now define:
\begin{equation}
\label{eq:local-dF}
\delta_F := \inf \Big\{\tau\in \mathbb{R}^+ : \;K\cap \big(F\times 
(\tau,+\infty)\big) = \varnothing \Big\}.
\end{equation} 
As $K$ is a simply polyhedral, $\delta_F > 0$ although it can be very small. 
\begin{figure}[h]
  \centering
  \begin{subfigure}[b]{0.33\linewidth}
    \centering\includegraphics[width=100pt]{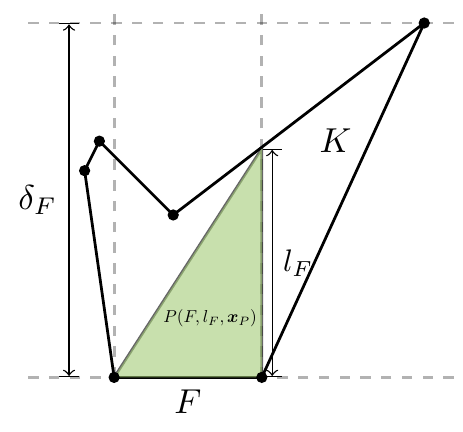}
    \caption{\label{fig:height}}
  \end{subfigure}%
\quad 
\begin{subfigure}[b]{0.33\linewidth}
      \centering\includegraphics[width=100pt]{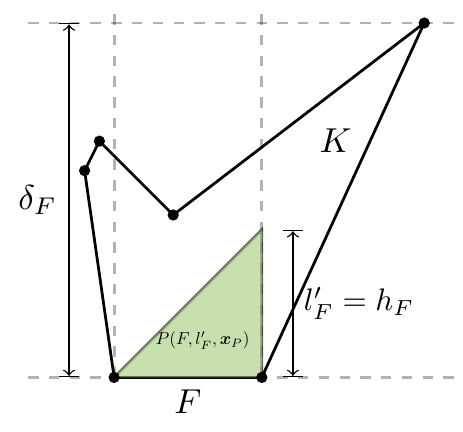}
      \caption{\label{fig:height-rescale}}
\end{subfigure}
\vspace{-10pt}
\caption{(\subref{fig:height}) $l_F\geq \gamma_1 h_F$ with $\gamma_1>1$. 
(\subref{fig:height-rescale}) A rescaled $P(F, l_F', \bs x_P) $ with $l_F' = h_F$.}
\label{fig:localheight}
\end{figure}

A pyramid with base $F$, apex $\bs x_P$, and height $l = 
\operatorname{dist}(\bm{x}_P,F) $ is defined as follows: 
\begin{equation}
\label{eq:pyramid}
P(F,l,\bs x_P) := \{ \bm{x}:\; \bm{x} = (1-t)\bm{x}_F + t \bm{x}_P, t\in (0,1), \bs 
x_F\in F \}.
\end{equation}
Then an inward height $l_F$ associated with face $F$ can be defined as follows:
\begin{equation}
\label{eq:local-lF}
l_F := \sup \Big\{l\in \mathbb{R}^+ : \exists\, P(F, l, \bs x_P) \subset K\cap 
\big(F\times 
(0, \delta_F]\big) \Big\}.
\end{equation}
Here the prism $F\times(0, \delta_F]$ is used to ensure the dihedral angles 
are bounded by $\pi/2$ between $F$ and the side faces of the pyramid 
$P(F,l_F,\bm{x}_P)$.

When $d=2$, as $K$ is non-degenerate (there are no self-intersecting edges) and 
bounded, $0< \delta_F < +\infty$ and $0< l_F \leq  \delta_F$ (see 
Figure.~\ref{fig:height} for example). 
When $d=3$, the 
existence of such pyramid $P(F, l_F, \bs x_P)$ is unclear, since $F$ itself can be 
non-convex. To be able to deal with such case, we impose the following assumption.
\begin{figure}[h]
  \centering
  \begin{subfigure}[b]{0.33\linewidth}
    \centering\includegraphics[width=100pt]{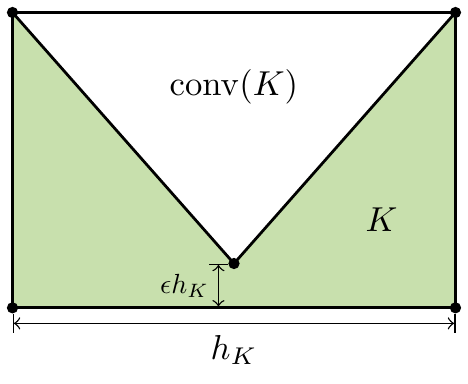}
    \caption{\label{fig:ncelem-hourglass}}
  \end{subfigure}%
\quad
\begin{subfigure}[b]{0.33\linewidth}
      \centering
      \includegraphics[width=75pt,trim=0 0 0.8cm 0]{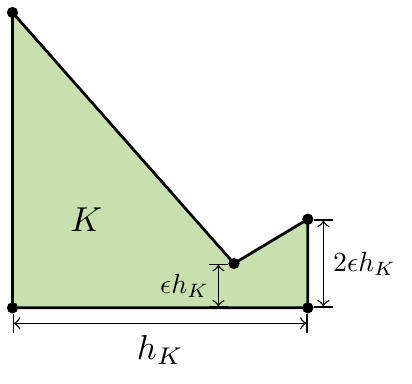}
      \caption{\label{fig:ncelem-epsbump}}
    \end{subfigure}
  \begin{subfigure}[b]{0.3\linewidth}
    \centering
    \centering\includegraphics[width=90pt]{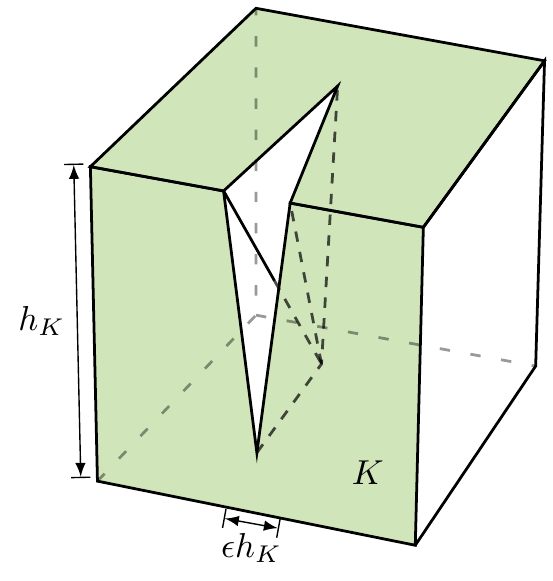}
    \caption{\label{fig:ncelem-crack}}
\end{subfigure}
\vspace{-20pt}
\caption{$\epsilon\to 0$ as $h\to 0$. (\subref{fig:ncelem-hourglass}) $K$ has the 
hourglass shape, and is not 
an isotropic element in the sense of the geometry assumptions in 
\cite{Cao;Chen:2018Anisotropic}. Yet this $K$ is isotropic under Assumptions 
\hyperref[asp:A]{\bf A--B--C}. (\subref{fig:ncelem-epsbump}) 
$K$ has a small hourglass-type bump which is 
ruled out by Assumption \hyperref[asp:C]{\bf C}. (\subref{fig:ncelem-crack}) 
$K$ with a crack is isotropic, and it has two faces satisfying Assumptions 
\hyperref[asp:A]{\bf A--B--C} in the sense of decompositions.}
\label{fig:nconv-example}
\end{figure}
\smallskip
\begin{itemize}
\item[{\bf B}.] (Height condition) \label{asp:B} There exists a 
constants $\gamma_1>0$, such that $\forall F\in\mathcal{F}_h(K)$, it has a partition 
$F = \bigcup_{\beta \in  B_1} F_{\beta}$ with $|B_1|$ uniformly bounded, such that 
each $F_{\beta}$ satisfies the height condition $l_{F_\beta}\geq \gamma_1 
h_F$ and consequently $l_F : = \min_{\beta \in B_1}l_{F_\beta}\geq \gamma_1 
h_F$. 
\end{itemize}
\smallskip

In Figure~\ref{fig:ncelem-hourglass}, the bottom edge satisfies the height condition 
\hyperref[asp:B]{\bf B} only when the decomposition argument is added in the 
assumption. In Figure~\ref{fig:ncelem-crack}, for the whole front face $F$ 
without decomposition, no such pyramid in \eqref{eq:pyramid} exists to yield a 
sensible \eqref{eq:local-lF} since there exists points outside $K$ in the line 
connecting the apex of the pyramid with a point on $F$.

Without loss of generality, one can assume that the constant in Assumption 
\hyperref[asp:B]{\bf B} satisfies $0<\gamma_1\leq 1$ when Assumption 
\hyperref[asp:B]{\bf B} is used as a premise of a proposition in later sections. The 
reason is that, when \hyperref[asp:B]{\bf B} holds, one can always rescale the 
height $l_F$ to $l'_F = \gamma'_1 h_F$, for any $0<\gamma'_1\leq \gamma_1$, while 
the new pyramid $P(F, l'_F, \bs x_P)$ still in $K$. When $\gamma_1> 
1$, we can simply set $\gamma'_1=1$ to be the new $\gamma_1$. See the illustration 
in Figure \ref{fig:height-rescale} for an example in 2-D.

Furthermore, in Figure~\ref{fig:ncelem-hourglass}, it shows a ``good'' 
hourglass-shaped element. To avoid small hourglass-type bumps from an element (e.g., 
see Figure~\ref{fig:ncelem-epsbump}, the following assumption is imposed.
\smallskip
\begin{itemize}
\item[{\bf C}.]  (Hourglass condition) \label{asp:C}$\forall F\in\mathcal{F}_h(K)$, 
it has a partition $F = \bigcup_{\beta \in  B_2} F_{\beta}$ with $|B_2|$ uniformly 
bounded, such that each $F_{\beta}$ satisfies the hourglass condition: $\forall 
\beta\in B_2$, 
there exists 
a convex subset $K_{\beta}\subseteq K$ with $h_{K_{\beta}}\eqsim h_K$, such that 
$P(F_{\beta},l_{F_\beta},\bs x_P)\subset K_{\beta}$. 
\end{itemize}
\smallskip

\begin{figure}[h]
  \centering
  \begin{subfigure}[b]{0.33\linewidth}
    \centering
    \includegraphics[width=100pt]{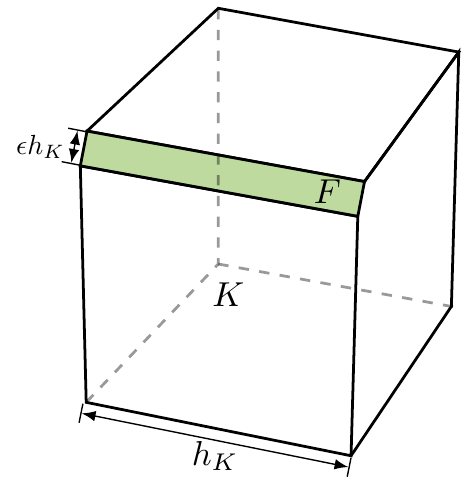}
    \caption{\label{fig:aniso-face}}
  \end{subfigure}%
\begin{subfigure}[b]{0.33\linewidth}
      \centering
      \includegraphics[width=110pt]{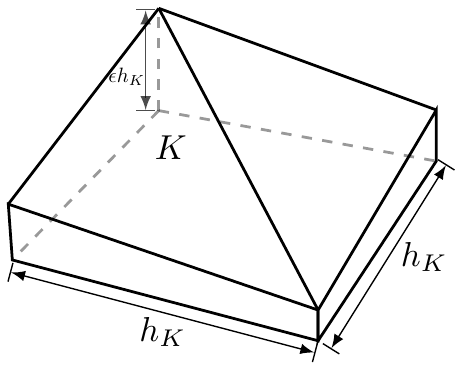}
      \caption{\label{fig:anisoelem-1}}
    \end{subfigure}
  \begin{subfigure}[b]{0.33\linewidth}
    \centering
\includegraphics[width=110pt]{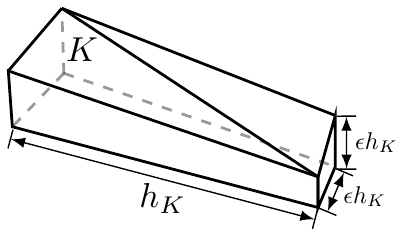}
    \caption{\label{fig:anisoelem-2}}
\end{subfigure}
\vspace{-20pt}
\caption{$\epsilon\to 0$ as $h\to 0$. (\subref{fig:ncelem-hourglass}) $K$ is a cube 
without a prismatic slit, the marked face is 
anisotropic yet the element is isotropic. (\subref{fig:ncelem-epsbump}) 
$K$ itself is anisotropic, all 
four side faces are anisotropic. (\subref{fig:ncelem-crack}) 
$K$ is anisotropic.}
\label{fig:anisoelem}
\end{figure}

Now we say an element $K$ is {\em isotropic} if Assumptions \hyperref[asp:A]{\bf 
A--B--C} hold for $K$, \SC{with the partitions $\{ F_{\beta}\}_{\beta\in B_1} 
= \{ F_{\beta}\}_{\beta\in B_2}$ for the same face $F$ in Assumptions 
\hyperref[asp:B]{\bf B--C} }.
Otherwise it is called {\em anisotropic}. As we mentioned earlier, isotropy and 
anisotropy can be formulated for 2-D polygons using the height condition and 
hourglass condition for edges. A complication in 3-D meshes is that for an 
isotropic polyhedron, we may have an anisotropic face or a tiny face. In both cases, 
$|F| \ll h_K^2$ (see Figure~\ref{fig:anisoelem} for examples of polyhedral 
elements). Henceforth, when Assumptions \hyperref[asp:B]{\bf B} and/or 
\hyperref[asp:C]{\bf C} are met, we denote $P_F:=P(F,l_F,\bs x_P)$, and whether the 
decomposition is used or not should be clear from the context.

\begin{lemma}[\revision{Scale of the volume/area for isotropic elements}]
\label{lemma:iso}
If $K$ is isotropic in the sense of Assumptions \hyperref[asp:A]{\bf 
A--B--C}, then $|K|\eqsim h_K^d$.
\end{lemma}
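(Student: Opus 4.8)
The plan is to prove the two inequalities hidden in $\eqsim$ separately, with all the work in the lower bound. The upper bound $|K|\lesssim h_K^d$ is immediate: fixing any $\bm x_0\in K$, every point of $K$ lies within distance $h_K$ of $\bm x_0$, so $K$ is contained in a ball of radius $h_K$ and $|K|\le c_d h_K^d$. For the lower bound it suffices to exhibit a single subregion of $K$ of measure $\gtrsim h_K^d$, and the natural candidate is one of the pyramids $P(F_\beta,l_{F_\beta},\bm x_P)$ furnished by the height condition.

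First I would locate a ``long'' face. Since $\partial K=\bigcup_{F\in\mathcal F_h(K)}F$ and the diameter $h_K$ is realized by two boundary points $\bm p,\bm q$, projecting $\partial K$ orthogonally onto the line through $\bm p,\bm q$ shows that the projected faces cover a segment of length $h_K$; each projection has length at most $h_F$ and $n_K$ is uniformly bounded by Assumption~\hyperref[asp:A]{\textbf{A}}, so some face $F^{*}$ has $h_{F^{*}}\gtrsim h_K$. Partitioning $F^{*}$ as in the height condition and using that $|B_1|$ is uniformly bounded, one gets a piece $F_\beta$ with $h_{F_\beta}\gtrsim h_K$, and by Assumption~\hyperref[asp:B]{\textbf{B}} the pyramid $P(F_\beta,l_{F_\beta},\bm x_P)\subset K$ has height $l_{F_\beta}\ge\gamma_1 h_{F^{*}}\gtrsim h_K$. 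In the planar case this already closes the argument: $F_\beta$ is a segment, so $|F_\beta|=h_{F_\beta}\gtrsim h_K$, and the triangle $P(F_\beta,l_{F_\beta},\bm x_P)$ has area $\tfrac12|F_\beta|\,l_{F_\beta}\gtrsim h_K^2$.

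The genuinely three-dimensional difficulty, which is the main obstacle, is that for $d=3$ the base $F_\beta$ may be an anisotropic thin strip, so that $|F_\beta|\ll h_K^2$ and the pyramid above it is a thin sliver of volume $\ll h_K^3$; a large-\emph{diameter} face need not be a fat face. To overcome this I would show that an isotropic $K$ cannot be degenerate in any direction, and then assemble a non-degenerate simplex inside $K$ with all altitudes $\gtrsim h_K$. The diameter of $F_\beta$ gives a segment of length $\gtrsim h_K$ in the base plane, and the apex $\bm x_P$ sits at height $\gtrsim h_K$, so $K$ is already thick in two transversal directions; the missing ingredient is a point of $K$ at distance $\gtrsim h_K$ from the plane they span. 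If no such point existed, $K$ would lie in a slab of thickness $\ll h_K$ — a ``flat'' element — and then a flat face of diameter $\gtrsim h_K$ (produced again by projecting onto the slab and using Assumption~\hyperref[asp:A]{\textbf{A}}) would admit no inward pyramid of height $\gtrsim h_K$, contradicting the height condition. This is exactly the point at which Assumption~\hyperref[asp:C]{\textbf{C}} is needed: it supplies a convex $K_\beta\subseteq K$ with $h_{K_\beta}\eqsim h_K$ containing the pyramid, and convexity is what lets a lower bound on the transversal width be converted into a volume bound, e.g. through the standard inequality relating the minimal width and the inradius of a convex body (Steinhagen's inequality), giving $|K|\ge |K_\beta|\gtrsim h_K^d$.

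The crux is therefore the transversal non-degeneracy step in $3$D: turning the qualitative statement ``every large-diameter face of a flat or needle-shaped element violates Assumption~\hyperref[asp:B]{\textbf{B}}'' into a clean quantitative lower bound on the width of $K$ in the missing direction, while ensuring the resulting full-dimensional fat region stays inside the possibly nonconvex $K$. The convex chunk $K_\beta$ from Assumption~\hyperref[asp:C]{\textbf{C}} is what reconciles these two demands, and the uniform bounds on $n_K$ and on the partition cardinalities $|B_1|,|B_2|$ from Assumptions~\hyperref[asp:A]{\textbf{A}}--\hyperref[asp:C]{\textbf{C}} are what keep every comparison constant independent of $h$.
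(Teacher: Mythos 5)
Your two--dimensional argument is correct and self-contained: a long edge exists by projection and Assumption \hyperref[asp:A]{\bf A}, and the triangle supplied by the height condition already has area $\gtrsim h_K^2$. In three dimensions, however, the crux step you flag is a genuine gap, not just a technicality left to the reader. Assumption \hyperref[asp:C]{\bf C} guarantees only a convex $K_\beta\subseteq K$ with $h_{K_\beta}\eqsim h_K$ containing the pyramid $P_{F_\beta}$; it imposes no lower bound on the \emph{width} of $K_\beta$. The convex hull of a sliver pyramid --- base a strip of diameter $\eqsim h_K$ but width $\epsilon h_K$, apex at height $\eqsim h_K$ --- is itself an admissible $K_\beta$: convex, of diameter $\eqsim h_K$, containing $P_{F_\beta}$, yet of volume $\eqsim \epsilon h_K^3$. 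So Steinhagen's inequality has no width hypothesis to act on, and $|K_\beta|\gtrsim h_K^3$ does not follow. Your non-flatness step produces a point $\bm{z}\in K$ at distance $\gtrsim h_K$ from the plane spanned by the base segment and the apex direction, but $\bm{z}$ lies in $K$, not in $K_\beta$, and since $K$ is nonconvex you cannot take $\operatorname{conv}(K_\beta,\bm{z})$ inside $K$. Moreover the dichotomy ``either $K$ sits in a thin slab or $K$ contains a fat region'' is false for nonconvex sets: a tripod of three mutually orthogonal thin rods through a common point has extent $\gtrsim h_K$ in \emph{every} direction yet volume $\ll h_K^3$. Such sets do violate Assumption \hyperref[asp:B]{\bf B} (their long side faces admit no inward pyramid of height $\gamma_1 h_F$ under any bounded partition, since $\delta_{F_\beta}$ is tiny), but your written argument invokes \hyperref[asp:B]{\bf B} only once, on a single long face, and nothing after the slab case excludes these configurations. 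Turning directional thickness into interior volume inside a nonconvex $K$ is precisely what remains unproved.

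The paper sidesteps the whole construction by using Assumptions \hyperref[asp:B]{\bf B}--\hyperref[asp:C]{\bf C} in integrated rather than pointwise form: pick the face $F$ of largest area, so $|F|\gtrsim |\partial K|$ by Assumption \hyperref[asp:A]{\bf A}, and apply the trace inequality of Lemma \ref{lemma:trace-hK} with $v\equiv 1$ (the gradient term drops out) to get $|F|=\norm{v}_{0,F}^2\lesssim h_K^{-1}|K|$; then $|K|\gtrsim h_K|\partial K|\gtrsim h_K |K|^{(d-1)/d}$ by the isoperimetric inequality, i.e. $|K|\gtrsim h_K^d$. There the geometric content of \hyperref[asp:B]{\bf B}--\hyperref[asp:C]{\bf C} enters only through the cone $\operatorname{conv}(\bm{a},P_F)$ in the proof of the trace inequality --- a per-face statement that never requires assembling one fat convex region inside $K$, which is exactly the object your approach cannot produce.
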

\begin{proof}
Obviously $|K| \lesssim h_K^d$ by the definition of diameter. It suffices to bound 
the volume $|K|$ below by $h_K^d$. We choose the face $F$ with the largest area on 
$\partial K$, by Assumption 
\hyperref[asp:A]{\bf A}, $|F| \gtrsim |\partial K|$. With slightly abuse of the 
order of the presentation, by the trace inequality with $v= 1$ in Lemma 
\ref{lemma:trace-hK}, we have $|F| = \norm{v}_{0,F}^2 \lesssim h_K^{-1} 
\norm{v}_{0,K}^2 = h_K^{-1} |K|$. Hence $|K|\gtrsim h_K |F| \gtrsim h_K |\partial 
K|$, and the lemma follows from the isoperimetric inequality $|\partial K| 
\gtrsim |K|^{(d-1)/d}$.  
\end{proof}

\subsection{Anisotropic elements}
For anisotropic elements, by definition, there exists faces such that the 
height condition and/or hourglass condition are violated.  
The case $l_F\ll h_F$ can be caused by either the non-convexity of 
$F$ and $K$, or the chunkiness parameter of $K$ being large.

To be able to use the trace inequalities on a face in an anisotropic 
element, the following condition on this element 
$K$ is proposed:

\smallskip
\begin{itemize}
\item[{\bf D}.] \label{asp:D} There exists an isotropic extended element patch 
$\omega_K$ consists of elements in $\mathcal T_h$ such that 
\begin{enumerate}
\item $K\subseteq \omega_K$;
\item $h_{\omega_K} \leq \gamma_2 h$ with a uniform constant $\gamma_2>0$;
\item $\forall F\in \mathcal{F}_h(K)$, $F$ satisfies Assumptions 
\hyperref[asp:B]{{\bf B}} and \hyperref[asp:C]{{\bf C}} toward 
$\omega_K$;
\item $n_{\omega_K}:= \bigl|\{K'\in \mathcal T_h( \omega_K) \} \bigr|$ is uniformly 
bounded above.
\end{enumerate}
\end{itemize}
\smallskip

By the construction of $\omega_K$ and the definition of an isotropic element, the 
height condition in Assumption \hyperref[asp:B]{{\bf B}} and the hourglass 
condition 
in Assumption \hyperref[asp:C]{{\bf C}} are met for every face $F\in 
\mathcal{F}_h(\partial \omega_K)$. With Assumption \hyperref[asp:D]{{\bf D}}, one 
can then lift a function defined on a boundary face $F\in \mathcal{F}_{h}({K})$  to 
the isotropic element $\omega_K$. 

\subsection{Finite overlapping of convex hulls}
For polytopal meshes, we impose the following conditions on the convex hull of $K$ 
for isotropic elements or $\omega_K$ for anisotropic elements. 
\smallskip
\begin{itemize}
\item[{\bf E}.] \label{asp:E} There exists a uniform constant $\gamma_3>0$ such that 
for each $K\in \mathcal T_h$
$$
| \{ K'\in \mathcal T_h: \operatorname{conv}(\omega_{K'})\cap  
\operatorname{conv}(\omega_{K}) \neq \varnothing \}| \leq \gamma_3.
$$
\end{itemize}
It can be verified that Assumption \hyperref[asp:E]{{\bf E}} is ensured if for any 
vertex, there are uniformly bounded number of polytopal elements 
surrounding this vertex. 

\subsection{Trace inequalities}
When using a trace inequality, one should be extremely careful as the constant  
depends on the shape of the domain. In this subsection, we shall re-examine 
several trace inequalities with more explicit analyses on the geometric conditions. 

\begin{lemma}[A trace inequality on a face in a polytopal element]
\label{lemma:trace-l2}
Suppose for the face $F$, there exists a triangle/pyramid 
$P_F:=P(F,l_F,\bs x_P)\subset K\cap \big(F\times (0, \delta_F]\big)$ with 
height $l_F$, then the following trace inequality holds:
\begin{equation}
\label{eq:tr-l2}
\norm{v}_{0,F} \lesssim l_F^{-1/2} \norm{v}_{0,P_F} 
+ \big(h_Fl_F^{-1/2} + l_F^{1/2} \big) \norm{\nabla v}_{0,P_F}.
\end{equation}
Consequently if furthermore the height condition \hyperref[asp:B]{{\bf B}} is 
satisfied, it holds that for $P_F := \bigcup_{\beta \in B_1} P_{F_{\beta}}$
\begin{equation}\label{eq:tr-h}
\norm{v}_{0,F} \lesssim h_F^{-1/2} \norm{v}_{0,P_F} + h_F^{1/2} 
\norm{\nabla v}_{0,P_F}.
\end{equation}
\end{lemma}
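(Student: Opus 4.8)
The plan is to prove \eqref{eq:tr-l2} by testing $v^2$ against the radial vector field emanating from the apex $\bm{x}_P$ and invoking the divergence theorem on $P_F$, and then to obtain \eqref{eq:tr-h} by applying \eqref{eq:tr-l2} to each piece of the decomposition furnished by the height condition \hyperref[asp:B]{\bf B}.

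First I would set $\bm{\psi}(\bm{x}) := \bm{x} - \bm{x}_P$ on $P_F$, so that $\nabla\cdot\bm{\psi}\equiv d$. The key geometric observation is that $P_F$ is a cone with apex $\bm{x}_P$: by the definition \eqref{eq:pyramid}, its lateral boundary is ruled by the segments joining $\bm{x}_P$ to $\partial F$, and along any such ray $\bm{\psi}$ is tangent to the lateral boundary, whence $\bm{\psi}\cdot\bm{n}=0$ there regardless of convexity or of the dihedral angles. On the base $F$ the outward normal of $P_F$ points along $-\tau$ (opposite the inward normal), and since every point of $F$ sits at $\tau$-height $0$ while $\bm{x}_P$ sits at $\tau$-height $l_F$, one computes $\bm{\psi}\cdot\bm{n}=l_F$ on $F$. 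Applying the divergence theorem to $v^2\bm{\psi}$ collapses the boundary integral to the base and yields
\[
l_F\,\norm{v}_{0,F}^2 = \int_{P_F}\bigl(d\,v^2 + 2\,v\,\nabla v\cdot\bm{\psi}\bigr)\dd x .
\]

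Next I would estimate the right-hand side. Because $P_F\subset K\cap\big(F\times(0,\delta_F]\big)$, every point of $P_F$ projects horizontally into $F$ and lies at height at most $l_F$, so $\operatorname{diam}(P_F)\le (h_F^2+l_F^2)^{1/2}\lesssim h_F+l_F$ and hence $|\bm{\psi}|\lesssim h_F+l_F$ on $P_F$. Cauchy--Schwarz on the cross term then gives
\[
l_F\,\norm{v}_{0,F}^2 \lesssim \norm{v}_{0,P_F}^2 + (h_F+l_F)\,\norm{v}_{0,P_F}\norm{\nabla v}_{0,P_F}.
\]
Dividing by $l_F$ and applying a Young's inequality that assigns weight $l_F^{-1}$ to $\norm{v}_{0,P_F}^2$ converts the cross term into a multiple of $l_F^{-1}(h_F+l_F)^2\norm{\nabla v}_{0,P_F}^2$; taking square roots and using the algebraic identity $l_F^{-1}(h_F+l_F)^2=(h_Fl_F^{-1/2}+l_F^{1/2})^2$ reproduces \eqref{eq:tr-l2} exactly.

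Finally, for \eqref{eq:tr-h} I would invoke the height condition \hyperref[asp:B]{\bf B} to write $F=\bigcup_{\beta\in B_1}F_\beta$ with $|B_1|$ uniformly bounded and, after the admissible rescaling noted beneath Assumption \hyperref[asp:B]{\bf B}, with $l_{F_\beta}\eqsim h_F$. Applying \eqref{eq:tr-l2} on each $P_{F_\beta}$ and using $h_{F_\beta}\le h_F$ (so that $h_{F_\beta}l_{F_\beta}^{-1/2}+l_{F_\beta}^{1/2}\lesssim h_F^{1/2}$) yields $\norm{v}_{0,F_\beta}\lesssim h_F^{-1/2}\norm{v}_{0,P_{F_\beta}}+h_F^{1/2}\norm{\nabla v}_{0,P_{F_\beta}}$; squaring, summing over the finitely many $\beta$, and using $P_F=\bigcup_{\beta\in B_1}P_{F_\beta}$ gives \eqref{eq:tr-h}. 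The step I expect to be the main obstacle is the clean vanishing of the lateral boundary term: making the ruled-cone argument rigorous when $F$ is non-convex, so that $P_F$ is still a Lipschitz domain on which the divergence theorem applies, is the delicate point, and it is precisely what lets the proof avoid any dependence on the dihedral-angle bound; a secondary technical care is the exponent bookkeeping in Young's inequality so that the constant depends only on $d$ and not on the anisotropy ratio $h_F/l_F$.
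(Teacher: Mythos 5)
Your proof is correct, but it takes a genuinely different route from the paper's. The paper proves \eqref{eq:tr-l2} by a mapping argument: for $l_F \eqsim h_F$ it truncates the pyramid to a prismatoid and cites Lemma A.3 of Wang--Ye (estimate \eqref{eq:tr-pyra}), and for $l_F \leq h_F$ it applies the anisotropic change of variables $(\bm{x}_F,\tau)\mapsto(\bm{x}_F,\tau l_F/h_F)$, arriving at the intermediate estimate \eqref{eq:Ftrace}, which is sharper than \eqref{eq:tr-l2} in that it weights the tangential gradient $\nabla_{\bm{x}_F}v$ by $h_F^2 l_F^{-1}$ and the normal derivative $\partial_\tau v$ by $l_F$ separately. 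Your one-shot argument, integrating $\nabla\cdot\bigl(v^2(\bm{x}-\bm{x}_P)\bigr)$ over the cone, merges these into the single factor $l_F^{-1}(h_F+l_F)^2=\bigl(h_Fl_F^{-1/2}+l_F^{1/2}\bigr)^2$ on the full gradient, which is exactly the stated \eqref{eq:tr-l2} and is all that is used downstream. What your route buys: it is self-contained (no external trace lemma, no prismatoid truncation, no Jacobian bookkeeping), the constants are explicitly dimensional ($d$ from the divergence, $2$ from Cauchy--Schwarz and Young), and the vanishing lateral flux handles non-convex $F$ transparently; your identity $\bm{\psi}\cdot\bm{n}=l_F$ on the base is also justified, since the prism constraint $P_F\subset F\times(0,\delta_F]$ forces the apex to lie vertically above $\overline{F}$ at height exactly $l_F = \operatorname{dist}(\bm{x}_P,F)$. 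What the paper's route buys is the finer anisotropic weighting in \eqref{eq:Ftrace}, which your argument cannot separate, though the lemma as stated does not need it. The Lipschitz worry you flag at the end is real but is dispatched without any cone-regularity theory: triangulate the simple polygon $F$ into triangles $T_i$, write $P_F$ as the finite union of the tetrahedra $P(T_i,l_F,\bm{x}_P)$, apply the divergence theorem on each tetrahedron (manifestly Lipschitz), and sum; the fluxes across interior interfaces cancel because $v^2\bm{\psi}\in W^{1,1}(P_F)$ has a single-valued trace there, leaving only the base pieces, where $\bm{\psi}\cdot\bm{n}=l_F$, and the exterior lateral triangles, each of which contains the apex in its plane so that $\bm{\psi}\cdot\bm{n}=0$; in 2-D, $P_F$ is a triangle and no care is needed. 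Your treatment of \eqref{eq:tr-h}---rescaling $l_{F_\beta}$ to $\eqsim h_F$ via the remark below Assumption \hyperref[asp:B]{\bf B}, using $h_{F_\beta}\leq h_F$, and summing the finitely many squared estimates over the (possibly overlapping, but boundedly many) pyramids---coincides with the paper's summation argument.
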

\begin{proof}
We first consider the case $l_F \eqsim h_F$ when $d=3$. 
By Lemma A.3 in~\cite{Wang;Ye:2014Galerkin},
\begin{equation}
\label{eq:tr-pyra}
\norm{v}_{0,F}^2 \lesssim h_F^{-1}\norm{v}_{0,P_{\frac1 2}(F, l_F, \bs x_P)}^2 + h_F 
\norm{\nabla v}_{0,P_{\frac1 2}(F, l_F, \bs x_P)}^2,
\end{equation}
where $P_{\frac12}(F, l_F, \bs x_P):=  \{ \bm{x}:\; \bm{x} = (1-t)\bm{x}_F + t 
\bm{x}_P, t\in (0,1/2), \bs x_F\in F \}$. The motivation to truncate the pyramid 
$P_F$ to the prismatoid is that the Jacobian of the mapping from the 
prismatoid to the prism is bounded.

For a general case, without loss of generality, we assume 
$l_F\leq h_F$, since otherwise, one can set $l_F = h_F$ first and \eqref{eq:tr-l2} 
still holds by \eqref{eq:tr-pyra}: we consider the following mapping
$(\bs x_F, \tau) \mapsto (\bs x_F, \tau l_F /h_F)$, denote $P := P_{\frac12}(F, l_F, 
\bs x_P)$, and let $\nabla_{\bm{x}_F}$ be the gradient taken with respect to 
$(\xi,\eta)$ in $F$'s local coordinate system. Then a straightforward change of 
variable computation yields:
\begin{equation}\label{eq:Ftrace}
\norm{v}_{0,F}^2 \lesssim l_F^{-1} \norm{v}_{0, P}^2 
+ h_F^2 l_F^{-1} \norm{\nabla_{\bm{x}_F} v}_{0,P}^2
+ l_F \norm{\partial_{\tau} v}_{0,P}^2.
\end{equation}
When $d=2$, a similar scaling argument can be found 
in \cite[Lemma 6.3]{Cao;Chen:2018Anisotropic} and estimate \eqref{eq:Ftrace} changes 
to $\norm{v}_{0,e}^2 \lesssim l_e^{-1} \norm{v}_{0,P}^2 
+ h_e^2 l_e^{-1} \norm{\partial_x v}_{0,P}^2 + l_e \norm{\partial_y v}_{0,P}^2$ for 
an edge $e$. As a result, \eqref{eq:tr-l2} holds. 

When $F$ satisfies Assumption \hyperref[asp:B]{{\bf B}}, 
$F = \bigcup_{\beta \in B_1} F_{\beta}$, each of $F_{\beta}$ satisfies the height 
condition 
with disjoint pyramids $P(F_{\beta},l_{F_\beta}, \bs x_{\beta})$. One can 
rescale all $l_{F_{\beta}}$ to be 
$l_F:= \min\limits_{\beta \in B_1} l_{F_{\beta}}$, and 
$P(F_{\beta},l_F)\subset P(F_{\beta},l_{F_{\beta}})\subset K\cap 
\big(F_{\beta}\times (0, \delta_{F_{\beta}}]\big)$. Thus under Assumption 
\hyperref[asp:B]{{\bf B}}, $\norm{v}_{0,F}^2=\sum_{\beta\in 
B_1}\norm{v}_{0,F_{\beta}}^2$ can be estimated by a simple summation of 
\eqref{eq:Ftrace}. 
\end{proof}

As we mentioned before, even for an isotropic element, it may contain a face $F$ 
with $h_F\ll h_K$ and thus the factor $h_F^{-1/2}$ in the trace inequality 
\eqref{eq:tr-h} may be uncontrollable. Next we shall use the hourglass condition 
\hyperref[asp:C]{{\bf C}} to replace $h_F^{-1/2}$ by a smaller factor $h_K^{-1/2}$.

\begin{lemma}[\revision{A trace inequality on a face satisfying the height condition 
and the 
hourglass condition}]
\label{lemma:trace-hK}
If a face $F\in \mathcal{F}_h(K)$ satisfies the height condition 
\hyperref[asp:B]{{\bf B}} and the hourglass condition \hyperref[asp:C]{{\bf C}}, 
then it holds that
\begin{equation}\label{eq:tr-hK}
\norm{v}_{0,F} \lesssim h_K^{-1/2} \norm{v}_{0,K} + h_K^{1/2} 
\norm{\nabla v}_{0,K}.
\end{equation}
\end{lemma}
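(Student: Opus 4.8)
The plan is to localize the trace to the pieces produced by the decomposition, reduce each piece to an $L^2$-mass transfer from a small pyramid onto the full-size convex cap $K_\beta$, and there trade the ``bad'' weight $h_F^{-1}$ for the ``good'' weight $h_K^{-1}$. First I would invoke Assumptions \hyperref[asp:B]{\bf B} and \hyperref[asp:C]{\bf C} (with the common partition $\{F_\beta\}_{\beta\in B_1}=\{F_\beta\}_{\beta\in B_2}$) to write $F=\bigcup_\beta F_\beta$ with $|B_2|$ uniformly bounded, and for each $\beta$ fix the convex cap $K_\beta\subseteq K$ with $h_{K_\beta}\eqsim h_K$ and $P_{F_\beta}=P(F_\beta,l_{F_\beta},\bs x_P)\subset K_\beta$, $l_{F_\beta}\geq\gamma_1 h_F$. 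Since $\norm{v}_{0,F}^2=\sum_\beta\norm{v}_{0,F_\beta}^2$ and each $K_\beta\subseteq K$, it suffices to prove the per-piece estimate $\norm{v}_{0,F_\beta}^2\lesssim h_K^{-1}\norm{v}_{0,K_\beta}^2+h_K\norm{\nabla v}_{0,K_\beta}^2$ and sum over the finitely many $\beta$. Along the way I would record the volume bound $|K_\beta|\gtrsim|F_\beta|\,h_K$, which we would verify from convexity of $K_\beta$ together with the presence of the full-height fin $P_{F_\beta}$ and a diameter-realizing point at distance $\eqsim h_K$.

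For a single piece I would start from the local trace already available in Lemma~\ref{lemma:trace-l2}, namely $\norm{v}_{0,F_\beta}^2\lesssim h_F^{-1}\norm{v}_{0,P_{F_\beta}}^2+h_F\norm{\nabla v}_{0,P_{F_\beta}}^2$. The gradient term is harmless, since $h_F\le h_K$ and $P_{F_\beta}\subset K_\beta$ give $h_F\norm{\nabla v}_{0,P_{F_\beta}}^2\le h_K\norm{\nabla v}_{0,K_\beta}^2$. The whole difficulty is concentrated in upgrading the $L^2$-term, i.e.\ in the mass-transfer inequality
\[
h_F^{-1}\norm{v}_{0,P_{F_\beta}}^2\lesssim h_K^{-1}\norm{v}_{0,K_\beta}^2+h_K\norm{\nabla v}_{0,K_\beta}^2.
\]
Writing $v=\overline{v}^{K_\beta}+(v-\overline{v}^{K_\beta})$, the constant part is controlled by the volume ratio, $h_F^{-1}|P_{F_\beta}|(\overline{v}^{K_\beta})^2\lesssim|F_\beta|\,|K_\beta|^{-1}\norm{v}_{0,K_\beta}^2\lesssim h_K^{-1}\norm{v}_{0,K_\beta}^2$, using $|P_{F_\beta}|\eqsim|F_\beta|h_F$ and the bound $|K_\beta|\gtrsim|F_\beta|h_K$. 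This is exactly where the weight $h_K^{-1}$ is manufactured.

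The main obstacle is the oscillation part $h_F^{-1}\norm{v-\overline{v}^{K_\beta}}_{0,P_{F_\beta}}^2$. A single Poincar\'e inequality on the convex set $K_\beta$ only yields $h_F^{-1}\norm{v-\overline{v}^{K_\beta}}_{0,K_\beta}^2\lesssim h_F^{-1}h_K^2\norm{\nabla v}_{0,K_\beta}^2$, whose weight $h_K^2/h_F$ is far too large when $h_F\ll h_K$. To recover the sharp weight $h_K$ I would instead compare the local mean $\overline{v}^{P_{F_\beta}}$ to $\overline{v}^{K_\beta}$ through a chain of $N\eqsim h_K/h_F$ overlapping balls of radius $\eqsim h_F$ threaded inside $K_\beta$ (convexity guarantees such a channel without pinching), apply a local Poincar\'e inequality on each ball, and telescope the differences with the Cauchy--Schwarz inequality. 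The crucial point is that the chain contributes a factor $N^{1/2}$, not $N$, so that after multiplying by $|F_\beta|$ the powers of $h_F$ cancel and only $h_K\norm{\nabla v}_{0,K_\beta}^2$ survives; together with the local bound $h_F^{-1}\norm{v-\overline{v}^{P_{F_\beta}}}_{0,P_{F_\beta}}^2\lesssim h_F\norm{\nabla v}_{0,P_{F_\beta}}^2\le h_K\norm{\nabla v}_{0,K_\beta}^2$ this closes the estimate, and summing over $\beta$ gives the lemma.

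Equivalently, one could quote a shape-robust trace inequality on the convex cap, in the spirit of the tools of~\cite{Li;Melenk;Wohlmuth;Zou:2010Optimal,Wang;Ye:2014Galerkin}, in the form $\norm{v}_{0,F_\beta}^2\lesssim|F_\beta|\,|K_\beta|^{-1}\norm{v}_{0,K_\beta}^2+h_K\norm{\nabla v}_{0,K_\beta}^2$. The delicate part there is again making the constant independent of the aspect ratio of $K_\beta$: a naive constant would scale with the chunkiness $h_K/l_F$ and reintroduce $h_F^{-1}$. It is precisely the hourglass condition \hyperref[asp:C]{\bf C} and the resulting volume bound $|K_\beta|\gtrsim|F_\beta|h_K$ that render this constant aspect-ratio-free, which I expect to be the decisive step of the proof.
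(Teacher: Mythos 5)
Your reduction matches the paper's exactly: localize to one piece $F_\beta$ of the common partition, dispose of the case $h_F\eqsim h_K$ via Lemma~\ref{lemma:trace-l2}, and reduce everything to the mass-transfer inequality $h_F^{-1}\norm{v}_{0,P_{F_\beta}}^2\lesssim h_K^{-1}\norm{v}_{0,K_\beta}^2+h_K\norm{\nabla v}_{0,K_\beta}^2$, which is precisely \eqref{eq:tr-embed}. But your proof of that inequality has a genuine gap, concentrated in the chaining step. First, the claim that ``convexity guarantees such a channel without pinching'' of radius-$\eqsim h_F$ balls is false for exactly the geometries this lemma must cover: convexity of $K_\beta$ together with $P_{F_\beta}\subset K_\beta$ and $h_{K_\beta}\eqsim h_K$ only guarantees width $\gtrsim h_F$ in the normal direction of $F_\beta$; if $F_\beta$ is an anisotropic $s\times t$ face with $t\ll s\leq h_F$ ($d=3$), the cap $K_\beta$ can be a slab/needle of width $\eqsim t$ in the third direction, so no ball of radius $h_F$ fits anywhere. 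Re-running your bookkeeping with the admissible radius $\rho\eqsim t$ gives $N\eqsim h_K/t$ steps, each contributing $\delta_i^2\lesssim t^{2-d}\norm{\nabla v}_{0,U_i}^2$, and after multiplying by $|F_\beta|\eqsim st$ you obtain $(s/t)\,h_K\norm{\nabla v}^2$ --- off by precisely the aspect ratio the lemma is designed to eliminate. Second, the endpoint of the telescope is unhandled: the last comparison, between the mean over a small ball and $\overline{v}^{K_\beta}$, is not a bounded-overlap local Poincar\'e step; the naive bound $|\overline{w}^{B_N}|^2\leq |B_N|^{-1}\norm{w}_{0,K_\beta}^2$ produces a weight $|F_\beta|\,h_F^{-d}h_K^2$, which exceeds the target $h_K$ by a factor $h_K/h_F$ when $|F_\beta|\eqsim h_F^{d-1}$. (A dyadically growing chain can repair some of this at the cost of a logarithm to be absorbed, but that is a different argument from the one you propose, and it still presupposes a ball channel that the needle-shaped caps do not admit.)

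The paper avoids all of this by using the hourglass condition in a stronger way than a mere volume bound: Assumption \hyperref[asp:C]{\bf C} supplies a point $\bm{a}\in K_F$ with $\operatorname{dist}(\bm{a},P_F)\eqsim h_K$ and $\operatorname{conv}(\bm{a},P_F)\subset K_F$, and the proof writes $P_F$ in polar coordinates about $\bm{a}$. On each ray, the mean value theorem picks $\xi$ with $v^2(\xi,\bm{\omega})\lesssim h_K^{-1}\int_\rho^{r_2}v^2\,{\rm d}t$, the fundamental theorem of calculus plus Young's inequality transports this to all $r\in(r_1,r_2)$, and integrating over the thin shell (thickness $|r_2-r_1|\lesssim h_F$) against $r^{d-1}\,{\rm d}r\,{\rm d}\bm{\omega}$ yields \eqref{eq:tr-embed} in one stroke: no mean/oscillation splitting, no balls, no logarithm, and no dependence on the aspect ratio of $K_\beta$ or $F_\beta$. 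Note also that the $h_K^{-1}\norm{v}_{0,K_\beta}^2$ term is manufactured by the mean value step along rays, not by your volume-ratio bound; indeed the volume estimate you invoke ($|K|\gtrsim|F|h_K$, Lemma~\ref{lemma:iso}) is in the paper a \emph{consequence} of the present lemma (applied with $v=1$), so in your ordering it would have to be proved independently via the cone $\operatorname{conv}(P_{F_\beta}\cup\{\bm{b}\})$ --- plausible, but asserted rather than established in your proposal.
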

\begin{proof}
{As the final inequality \eqref{eq:tr-hK} can be trivially generalized from each 
$F_{\beta}$ to $F = \bigcup_{\beta}F_{\beta}$ using the same argument with the one 
in Lemma \ref{lemma:trace-l2}, we consider only one face $F_{\beta}$ in the 
decomposition, which shall be denoted by $F$ subsequently in the proof.}  First 
Assumption 
\hyperref[asp:B]{{\bf B}} implies the validity of the trace inequality 
\eqref{eq:tr-h}. 
When Assumption \hyperref[asp:C]{{\bf C}} is met, let $K_F$ be 
the convex subset of $K$ containing $P_F$. Since \eqref{eq:tr-hK} holds trivially if 
$h_F\eqsim h_K$, it suffices to consider the case when $h_F\ll h_K$. Due to the 
convexity of $K_F$, $\operatorname{conv}(P_F) \subset K_F$, without loss of 
generality we can assume that $P_F$ is convex. 
Moreover, we recall that the rescaling argument facilitated by Assumption 
\hyperref[asp:B]{{\bf B}} allows us to set $h_{P_F}\eqsim h_F$. By $K_F\subseteq K$, 
it suffices to show that:
\begin{equation}
\label{eq:tr-embed}
h_F^{-1}\norm{v}_{0,P_F}^2 \lesssim h_{K}^{-1} \norm{v}_{0,K_F}^2 
+ h_{K} \norm{\nabla  v}_{0,K_F}^2.
\end{equation}

By Assumption \hyperref[asp:C]{{\bf C}}, there 
exists a point $\bm{a}\in K_F$, such that $\operatorname{dist}(\bm{a}, P_F) \eqsim 
h_{K_F}\eqsim h_K$ and $\operatorname{conv}(\bm{a},P_F)\subset K_F$ (e.g., see 
Figure \ref{fig:trace-ineq}). Now 
thanks to the convexity of $P_F$, it has the following local polar coordinate 
representation using $\bm{a}$ as the origin:
\begin{equation}
P_F = \{ \bm{x} = \bm{x}(r,\bm{\omega}) = r\bm{\omega}: r_1(\bm{\omega}) \leq r \leq 
r_2(\bm{\omega}), \bm{\omega} \in A_F\subset\mathbb{S}^{d-1}, d=2,3 \},
\end{equation}
and $\operatorname{conv}(\bm{a},P_F) = \{ r\bm{\omega}: 
0 \leq r \leq r_2(\bm{\omega}), \;\bm{\omega} \in A_F \}$. 
\begin{figure}[h]
\centering
\includegraphics[width = 0.45\textwidth]{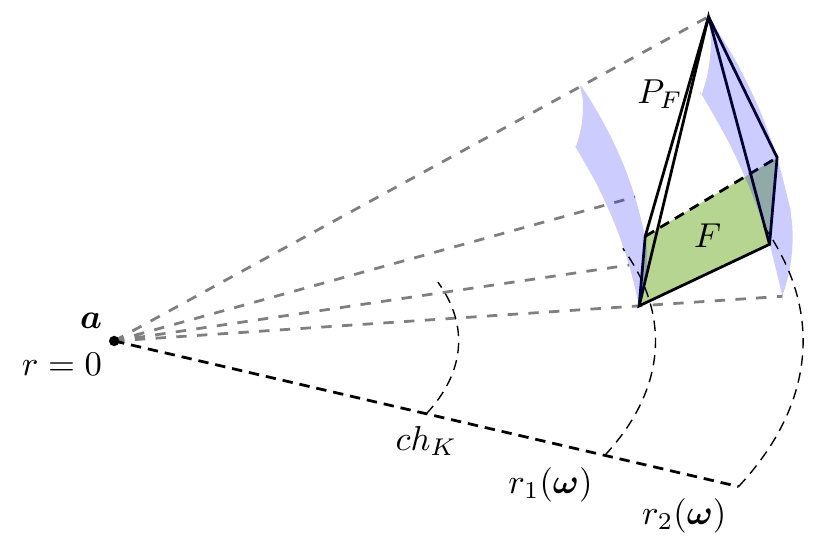}
\caption{An illustration of $\operatorname{conv}(\bm{a},P_F) \subset K_F$.}
\vspace{-20pt}
\label{fig:trace-ineq}
\end{figure}
For $\bm{x}(r,\bm{\omega})\in 
P_F$, we denote $v(r,\bm{\omega}):= 
v\big(\bm{x}(r,\bm{\omega})\big)$. Now for a fixed surface variable 
$\bm{\omega}$, $|r_1(\bm{\omega}) - r_2(\bm{\omega})| \lesssim h_F$, and
$r_i(\bm{\omega}) \eqsim h_K$. Moreover, we can choose $\rho$ and constant 
$c$ bounded away from $0$ independent of $h_K$ such that $ch_K = \rho < 
r_1(\bm{\omega})$, and thus $|r_2(\bm{\omega}) - \rho|\gtrsim h_K$. 
The mean value theorem implies that there exists a $\xi\in (\rho, r_2(\bm{\omega}))$ 
such that
\begin{equation}
v^2(\xi,\bm{\omega}) = \frac{1}{|r_2(\bm{\omega}) - \rho|}
\int^{r_2(\bm{\omega})}_{\rho} v^2(t,\bm{\omega}) \dd t
\lesssim \frac{1}{h_K} \int^{r_2(\bm{\omega})}_{\rho} v^2(t,\bm{\omega}) \dd t.
\end{equation}
By the fundamental theorem of calculus, 
Young's inequality, and the inequality above, we have
\begin{equation*}
\begin{aligned}
v^2(r,\bm{\omega}) &= v^2(\xi,\bm{\omega}) + \int^r_{\xi} \partial_t\big( 
v^2(t,\bm{\omega}) \big) \dd t
\leq  v^2(\xi,\bm{\omega}) 
+ h_K^{-1}\int^r_{\xi}  v^2  \dd t 
+ h_K \int^r_{\xi} |\partial_t v|^2  \dd t
\\
& \lesssim  h_K^{-1}\int^{r_2(\bm{\omega})}_{\rho}  v^2  \dd t 
+ h_K \int^{r_2(\bm{\omega})}_{\rho} |\partial_t v|^2  \dd t,
\end{aligned}
\end{equation*}
where we note that $\xi$ is not present in the final inequality above, thus this 
inequality holds for any $\bm{\omega} \in A_F$.
Integrating above inequality with respect to $r^{d-1}\dd r$ and using the fact that 
$|r_1(\bm{\omega})- r_2(\bm{\omega})| \leq h_{P_F}\eqsim h_F$, we have:
\begin{align*}
\int^{r_2(\bm{\omega})}_{r_1(\bm{\omega})} v^2(r,\bm{\omega}) r^{d-1}\dd r 
& \lesssim \int^{r_2(\bm{\omega})}_{r_1(\bm{\omega})} 
\left(h_K^{-1}\int^{r_2(\bm{\omega})}_{\rho}  
v^2  \dd t + h_K \int^{r_2(\bm{\omega})}_{\rho} |\partial_t v|^2  \dd t 
\right)r^{d-1}dr
\\
& \lesssim h_F h_K^{d-1}\left(h_K^{-1}\int^{r_2(\bm{\omega})}_{\rho}  
v^2  \dd t + h_K \int^{r_2(\bm{\omega})}_{\rho} |\partial_t v|^2  \dd t 
\right):= (\dagger)
\end{align*}
As $t > \rho = ch_K$ in the integrals and $c$ is bounded away from $0$, the factor 
$h_K^{d-1}$ can be moved into the integrals above, thus $(\dagger)$ can be 
bounded by
\begin{equation*}
(\dagger) \lesssim h_F 
\left(h_K^{-1}\int^{r_2(\bm{\omega})}_{\rho}  v^2(t,\bm{\omega}) t^{d-1} \dd t  
+ h_K\int^{r_2(\bm{\omega})}_{\rho} |\partial_t v(t,\bm{\omega})|^2 t^{d-1} \dd t 
\right).
\end{equation*}
Lastly, $\bm{x} = r\bm{\omega}$ together with $|\bm{\omega}|=1$ implies $|\partial_t 
v(t,\bm{\omega})| \leq |\nabla v|$, integrating both sides of the integral above 
with respect to surface measure $\dd \bm{\omega}$ on $A_F$ and rearranging the 
factors yield:  
\begin{equation*}
h_F^{-1}\norm{v}_{0,P_F}^2 \lesssim h_K 
\norm{v}_{0,\operatorname{conv}(\bm{a},P_F)}^2 
+ h_K\norm{\nabla v}_{0,\operatorname{conv}(\bm{a},P_F)}^2.
\end{equation*}
Consequently \eqref{eq:tr-embed} is valid since 
$\operatorname{conv}(\bm{a},P_F)\subset K_F$, and the lemma follows.
\end{proof}

\begin{remark}\rm 
\label{remark:trace}
\revision{When $K$ is uniformly star-shaped, we can choose the vertex $\bs a$ as the 
center of the largest inscribed ball for all faces $F$. With Assumptions 
\hyperref[asp:B]{{\bf B}} and  \hyperref[asp:C]{{\bf C}}, the vertex $\bs a$ could 
vary for different faces, and thus a more flexible geometry is allowed. See 
Fig.~\ref{fig:ncelem-hourglass} and \ref{fig:ncelem-crack} for examples satisfying 
Assumptions \hyperref[asp:A]{{\bf A}}, \hyperref[asp:B]{{\bf B}} and  
\hyperref[asp:C]{{\bf C}} but not uniformly star-shaped.
}
\end{remark}

\subsection{Poincar\'e Inequalities}
In this subsection, we review Poincar\'{e}--Friedrichs inequalities with a constant 
depending only on the diameter of the domain but not on the shape.

\begin{lemma}[Poincar\'e inequality of a linear polynomial on a face]
\label{lemma:poincare-linear-face}
On any face 
$F\in \mathcal{F}_h(K)$, for a linear polynomial $q\in \mathbb P_1(F)$, the estimate
$$
\norm{q- \overline{q}^{F}}_{0,F} \leq h_F\norm{\nabla_F q}_{0,F},
$$
where $\nabla_F$ denotes the surface gradient on $F$.
\end{lemma}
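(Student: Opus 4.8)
The plan is to exploit the fact that a linear polynomial on a flat face has a constant surface gradient, which collapses the Poincar\'e inequality into an elementary geometric estimate. Write $\bm{g} := \nabla_F q$, a constant vector in the hyperplane containing $F$, and let $\overline{\bm{x}}^{F} := |F|^{-1}\int_F \bm{x}\,\dd S$ be the barycenter of $F$. Since $q$ is affine, $q(\bm{x}) = \bm{g}\cdot\bm{x} + c$ for some constant $c$, and averaging over $F$ gives $\overline{q}^{F} = \bm{g}\cdot\overline{\bm{x}}^{F} + c$. Subtracting yields the pointwise identity $q(\bm{x}) - \overline{q}^{F} = \bm{g}\cdot(\bm{x} - \overline{\bm{x}}^{F})$ for every $\bm{x}\in F$.

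Next I would apply the Cauchy--Schwarz inequality pointwise, $|q(\bm{x}) - \overline{q}^{F}| \le |\bm{g}|\,|\bm{x} - \overline{\bm{x}}^{F}|$, and integrate the square over $F$:
\[
\norm{q - \overline{q}^{F}}_{0,F}^2 \le |\bm{g}|^2 \int_F |\bm{x} - \overline{\bm{x}}^{F}|^2 \,\dd S.
\]
The remaining step is to bound $|\bm{x} - \overline{\bm{x}}^{F}|$ by $h_F$ uniformly for $\bm{x}\in F$. This follows because the barycenter lies in $\operatorname{conv}(F)$ while $\bm{x}\in F \subseteq \operatorname{conv}(F)$, so both points belong to a set of diameter $\operatorname{diam}(\operatorname{conv}(F)) = h_F$; hence $|\bm{x} - \overline{\bm{x}}^{F}| \le h_F$ and consequently $\int_F |\bm{x} - \overline{\bm{x}}^{F}|^2\,\dd S \le h_F^2 |F|$.

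Finally, since $\bm{g}$ is constant we have $\norm{\nabla_F q}_{0,F}^2 = |\bm{g}|^2 |F|$, so combining the two displays gives $\norm{q - \overline{q}^{F}}_{0,F}^2 \le h_F^2 |\bm{g}|^2 |F| = h_F^2 \norm{\nabla_F q}_{0,F}^2$, and taking square roots closes the argument. I expect no serious obstacle; the only point requiring care is that $F$ may be non-convex, as permitted by the geometric assumptions, so one should argue through $\operatorname{conv}(F)$ rather than assume the barycenter itself lies in $F$. Notably the estimate is dimension-independent and invokes neither the height nor the hourglass conditions, reflecting that for an affine function on a flat face the Poincar\'e constant is controlled purely by the diameter.
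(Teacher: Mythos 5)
Your proof is correct and follows essentially the same route as the paper's: write $q-\overline{q}^{F}=(\bm{x}-\overline{\bm{x}}^{F})\cdot\nabla_F q$ using the constancy of the surface gradient, bound $|\bm{x}-\overline{\bm{x}}^{F}|\leq h_F$, and integrate. Your extra remark about passing through $\operatorname{conv}(F)$ when $F$ is non-convex is a valid (and slightly more careful) justification of a step the paper leaves implicit, since the diameter of a set equals that of its convex hull.
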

\begin{proof}
Here we use the local Cartesian coordinate $\xi \bm{t}_{F,1} + \eta\bm{t}_{F,2}=: 
\bm{x}\in F$ in defining \eqref{eq:local-dF}, then for $q\in \mathbb P_1(F)$, 
$q = \bm{x}\cdot \nabla_F q + c$, where $\nabla_F q$ is a constant vector. The lemma 
then follows from the a direct calculation: $q- \overline{q}^{F} = (\bm{x} - 
\overline{\bm{x}}^F )\cdot \nabla_F q$, and
\begin{equation}
\norm{q- \overline{q}^{F}}_{0,F}^2 \leq \int_F |\bm{x} - \overline{\bm{x}}^F|^2 \,
|\nabla_F q|^2\,\dd S \leq h_F^2 \norm{\nabla_F q}_{0,F}^2.
\end{equation}
\end{proof}

\begin{lemma}[Poincar\'e inequality of a linear polynomial on the patch] 
\label{lemma:poincare-linear-patch}
For a linear polynomial $q\in \mathbb P_1(\omega_K)$ such that 
$\int_{\partial \omega_K} q \,\dd S = 0$, where $\omega_K$ satisfies 
Assumption D, the following estimate holds with a constant independent of the 
geometries of $K$ or $\omega_K$:
$$
\norm{q}_{0,K} \leq h_{\omega_K}\|\nabla q\|_{0,K}.
$$
\end{lemma}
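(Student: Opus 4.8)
The plan is to exploit that $q$ is affine, so its gradient $\nabla q =: \bm{g}$ is a constant vector and $q(\bm{x}) = \bm{g}\cdot\bm{x} + c$ for some constant $c$. The whole argument is then a pointwise bound followed by integration, in the same spirit as Lemma \ref{lemma:poincare-linear-face}, but with the reference point dictated by the boundary-average constraint.

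First I would rewrite the constraint $\int_{\partial\omega_K} q\,\dd S = 0$ in terms of a single point. Since $q$ is affine, its boundary average equals its value at the surface centroid: setting $\bm{x}_c := |\partial\omega_K|^{-1}\int_{\partial\omega_K}\bm{x}\,\dd S$, one has
\begin{equation*}
0 = \frac{1}{|\partial\omega_K|}\int_{\partial\omega_K} q\,\dd S = \bm{g}\cdot\bm{x}_c + c = q(\bm{x}_c).
\end{equation*}
Hence $q$ vanishes at $\bm{x}_c$, and we may write $q(\bm{x}) = \bm{g}\cdot(\bm{x}-\bm{x}_c)$, so that Cauchy--Schwarz gives the pointwise estimate $|q(\bm{x})| \leq |\bm{g}|\,|\bm{x}-\bm{x}_c|$ for every $\bm{x}\in K$.

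Next I would supply the geometric observation that controls $|\bm{x}-\bm{x}_c|$. The centroid $\bm{x}_c$ is an average of points of $\partial\omega_K\subset\omega_K$, hence a convex combination, so $\bm{x}_c\in\operatorname{conv}(\omega_K)$; and since $\bm{x}\in K\subseteq\omega_K\subseteq\operatorname{conv}(\omega_K)$, both points lie in $\operatorname{conv}(\omega_K)$, whose diameter is exactly $\operatorname{diam}(\omega_K)=h_{\omega_K}$. Therefore $|\bm{x}-\bm{x}_c|\leq h_{\omega_K}$, giving the uniform bound $|q(\bm{x})|\leq h_{\omega_K}|\bm{g}|$ on all of $K$. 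Finally, integrating over $K$ and using that $\nabla q=\bm{g}$ is constant, so $\norm{\nabla q}_{0,K}^2=|\bm{g}|^2|K|$, yields
\begin{equation*}
\norm{q}_{0,K}^2 \leq h_{\omega_K}^2\,|\bm{g}|^2\,|K| = h_{\omega_K}^2\,\norm{\nabla q}_{0,K}^2,
\end{equation*}
and taking square roots gives the claim with constant exactly $1$, independent of the shapes of $K$ and $\omega_K$.

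This lemma is elementary, and I do not expect a serious obstacle. The only point that deserves care is that the constraint is posed on $\partial\omega_K$ rather than on $\omega_K$ or $K$ itself: one must check that the vanishing point $\bm{x}_c$ nonetheless lies in $\operatorname{conv}(\omega_K)$ (which holds because it is a surface average of boundary points) and that the relevant diameter controlling $|\bm{x}-\bm{x}_c|$ is $h_{\omega_K}$ rather than $h_K$. Once this is recognized, the shape-independence of the constant follows immediately, which is precisely the feature the lemma advertises.
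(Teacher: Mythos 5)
Your proof is correct and follows essentially the same route as the paper's: both rewrite $q$ as $\nabla q\cdot(\bm{x}-\overline{\bm{x}}^{\partial\omega_K})$ using the boundary-average constraint, note that the surface centroid lies in $\operatorname{conv}(\omega_K)$ so that $|\bm{x}-\overline{\bm{x}}^{\partial\omega_K}|\leq h_{\omega_K}$, and integrate over $K$. Your observation that the centroid argument and the use of $h_{\omega_K}$ (rather than $h_K$) are the only delicate points matches the paper's emphasis exactly.
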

\begin{proof}
For $q\in \mathbb P_1(\omega_K)$, $q =\bm{x} \cdot \nabla q + c$ with a constant 
$c$. By the fact that the 
constraint is imposed on the boundary integral on $\partial\omega_K$, similar to the 
previous lemma, it can be verified that $q = (\bm{x} - \overline{\bm{x}}^{\partial 
\omega_K})\cdot \nabla q$, where 
$$\overline{\bm{x}}^{\partial \omega_K} = 
\frac{1}{|\partial \omega_K|} \int_{\partial \omega_K} \bm{x} \dd S = \sum_{F\in 
\mathcal{F}_h(\partial \omega_K)} \frac{|F|}{|\partial \omega_K|} 
\overline{\bm{x}}^F,$$ 
where $\overline{\bm{x}}^F\in \operatorname{conv}(F)$, hence
$\overline{\bm{x}}^{\partial \omega_K}\in \operatorname{conv}(\omega_K)$. As a 
result, $|\bm{x} - \overline{\bm{x}}^{\partial 
\omega_K}| \leq h_{\omega_K}$, and we have
\begin{equation}
\norm{q}_{0,K}^2 \leq \int_K |\bm{x} - \overline{\bm{x}}^{\partial 
\omega_K}|^2 \, |\nabla q|^2\,\dd \bm{x} \leq h_{\omega_K}^2 \norm{\nabla q}_{0,K}^2.
\end{equation}
\end{proof}

Notice that the constraint is imposed on the boundary of a bigger patch $\omega_K$ 
but the inequality holds on a smaller region $K$. When using this inequality in the 
a priori error estimate in order to get the optimal rate of convergence, the 
constant will dependent only on $\gamma_2$.

For the approximation property of the polynomial projection, we opt to use the 
Poincar\'e inequality on a convex domain, thus to utilize the convex hull of a 
possible non-convex element.
\begin{lemma}[Poincar\'{e} inequality on the convex hull]
\label{lemma:poincare-convex}
Let $\omega$ be a \revision{bounded simple polygon/polyhedron}, the following 
Poincar\'{e} inequality holds for any $v\in 
H^1\big(\operatorname{conv}(\omega)\big)$:
\begin{equation}
\label{eq:poincare}
\norm{v- \overline{v}^{\omega}}_{0,\omega}\leq \frac{h_\omega}{\pi}\norm{\nabla 
v}_{0,\operatorname{conv}(\omega)}.
\end{equation}
\end{lemma}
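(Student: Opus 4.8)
The plan is to reduce the claim to the classical Payne--Weinberger Poincar\'e inequality on a convex body, for which the sharp constant $h/\pi$ is known. Write $\Omega^{*} := \operatorname{conv}(\omega)$. This is convex by construction, and a standard fact is that a bounded set and its convex hull share the same diameter, so $h_{\Omega^{*}} = h_{\omega}$. The subtlety of the statement is only that the mean $\overline{v}^{\omega}$ is taken over the (possibly non-convex) polytope $\omega$ while the gradient is measured on the larger convex set $\Omega^{*}$; I would handle this by passing through the mean over $\Omega^{*}$ as an intermediate quantity.

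First I would use that, for fixed $v$, the constant minimizing the $L^{2}(\omega)$-deviation is exactly the mean over $\omega$: for every $c\in\mathbb{R}$, $\norm{v-\overline{v}^{\omega}}_{0,\omega}\leq \norm{v-c}_{0,\omega}$. Choosing $c=\overline{v}^{\Omega^{*}}$ yields
\begin{equation*}
\norm{v-\overline{v}^{\omega}}_{0,\omega}\leq \norm{v-\overline{v}^{\Omega^{*}}}_{0,\omega}.
\end{equation*}
Next, since $\omega\subseteq\Omega^{*}$, enlarging the domain of integration can only increase the $L^{2}$-norm, so $\norm{v-\overline{v}^{\Omega^{*}}}_{0,\omega}\leq \norm{v-\overline{v}^{\Omega^{*}}}_{0,\Omega^{*}}$.

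Finally I would invoke the Payne--Weinberger estimate on the convex domain $\Omega^{*}$, namely $\norm{v-\overline{v}^{\Omega^{*}}}_{0,\Omega^{*}}\leq (h_{\Omega^{*}}/\pi)\,\norm{\nabla v}_{0,\Omega^{*}}$, and substitute $h_{\Omega^{*}}=h_{\omega}$ and $\Omega^{*}=\operatorname{conv}(\omega)$. Chaining the three inequalities gives exactly
\begin{equation*}
\norm{v-\overline{v}^{\omega}}_{0,\omega}\leq \frac{h_{\omega}}{\pi}\norm{\nabla v}_{0,\operatorname{conv}(\omega)}.
\end{equation*}

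The only genuinely nontrivial ingredient is the Payne--Weinberger theorem supplying the sharp constant $h/\pi$ on convex domains; everything else is the elementary variational characterization of the mean together with monotonicity of the norm in the integration domain. Hence the \emph{main obstacle} is not a computation but correctly citing the convex Poincar\'e inequality with its optimal constant, and observing that convexity of $\operatorname{conv}(\omega)$ (rather than of $\omega$ itself) is precisely what licenses its use. I expect no further difficulty, since no shape-regularity of $\omega$ enters: the constant depends only on $h_{\omega}$, which is the point the lemma is meant to make.
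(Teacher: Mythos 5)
Your proposal is correct and coincides with the paper's own proof: best $L^2(\omega)$-approximation by the mean, monotonicity of the norm under enlarging the integration domain to $\operatorname{conv}(\omega)$, and the Payne--Weinberger inequality on the convex hull. Your explicit remark that $h_{\operatorname{conv}(\omega)} = h_\omega$ is a detail the paper leaves implicit, but the argument is identical.
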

\begin{proof}
 As $\overline{v}^{\omega}$ is the best constant approximation in $L^2(\omega)$-norm:
\begin{equation*}
\norm{v - \overline{v}^{\omega}}_{0, \omega}
\leq \big\| v - \overline{v}^{\operatorname{conv}(\omega)} \big\|_{0, K}
\leq \big\| v - \overline{v}^{\operatorname{conv}(\omega)} \big\|_{0, 
\operatorname{conv}(\omega)}
\leq  \frac{h_\omega}{\pi}\|\nabla v\|_{0, \operatorname{conv}(\omega)}.
\end{equation*}
In the last step, the Poincar\'e inequality on a convex 
set~\cite{Payne;Weinberger:1960optimal} is used.
\end{proof}

We then establish a similar result when the constraint is posed on the boundary 
integral.

\begin{lemma}[Poincar\'{e} inequality with zero boundary average on isotropic 
polyhedron]
\label{lemma:poincare-b}
Let $K$ be a polypotal element satisfying Assumptions \hyperref[asp:A]{\bf A--B}, 
then for any $v\in H^1(\operatorname{conv}(K))$, the following Poincar\'{e} 
inequality holds:
\begin{equation}
\label{eq:poincare-b}
\big\Vert{v- \overline{v}^{\partial K}}\big\Vert_{0,K}
\lesssim h_K\norm{\nabla v}_{0,\operatorname{conv}(K)}.
\end{equation}
\end{lemma}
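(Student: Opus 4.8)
The plan is to compare the boundary average $\overline{v}^{\partial K}$ against the volume average $\overline{v}^{K}$ and reduce everything to the convex-domain Poincar\'e inequality of Lemma~\ref{lemma:poincare-convex} together with the face trace inequality \eqref{eq:tr-h}. Since $\overline{v}^{K}-\overline{v}^{\partial K}$ is a constant, the triangle inequality gives
\[
\norm{v-\overline{v}^{\partial K}}_{0,K}\le \norm{v-\overline{v}^{K}}_{0,K}+|K|^{1/2}\bigl|\overline{v}^{K}-\overline{v}^{\partial K}\bigr|,
\]
and Lemma~\ref{lemma:poincare-convex} (with $\omega=K$) bounds the first term by $\tfrac{h_K}{\pi}\norm{\nabla v}_{0,\operatorname{conv}(K)}$. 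The whole difficulty thus concentrates on the constant $|\overline{v}^{K}-\overline{v}^{\partial K}|$. I would write $\overline{v}^{K}-\overline{v}^{\partial K}=|\partial K|^{-1}\sum_{F\in\mathcal{F}_h(K)}\int_F(\overline{v}^{K}-v)\,\dd S$ and keep the individual face weights $|F|$ throughout, since discarding them (say, a single Cauchy--Schwarz against $|\partial K|^{1/2}$) would produce an uncontrolled $\sum_F h_F^{-1}$ on short faces.

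Setting $w:=v-\overline{v}^{K}$ (so $\nabla w=\nabla v$), on each face I apply Cauchy--Schwarz and then the trace inequality \eqref{eq:tr-h} of Lemma~\ref{lemma:trace-l2}, which is available because Assumption~\hyperref[asp:B]{\bf B} holds:
\[
\Bigl|\int_F w\,\dd S\Bigr|\le |F|^{1/2}\norm{w}_{0,F}\lesssim |F|^{1/2}h_F^{-1/2}\norm{w}_{0,P_F}+|F|^{1/2}h_F^{1/2}\norm{\nabla v}_{0,P_F}.
\]
Rescaling $l_F\eqsim h_F$ (as permitted under \hyperref[asp:B]{\bf B}) gives $|P_F|\eqsim|F|h_F$, hence $|F|^{1/2}h_F^{1/2}\eqsim|P_F|^{1/2}$ and, crucially, $|F|^{1/2}h_F^{-1/2}\le h_F^{(d-2)/2}\le h_K^{(d-2)/2}$ from $|F|\le h_F^{d-1}$. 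The gradient terms, summed over the $O(n_K)$ faces with the finite overlap of the $P_F\subset K$ (Assumption~\hyperref[asp:A]{\bf A}), yield $\sum_F|P_F|^{1/2}\norm{\nabla v}_{0,P_F}\lesssim|K|^{1/2}\norm{\nabla v}_{0,K}$; after the prefactor $|K|^{1/2}/|\partial K|$ this becomes $\tfrac{|K|}{|\partial K|}\norm{\nabla v}_{0,K}\lesssim h_K\norm{\nabla v}_{0,K}$ by the isoperimetric inequality $|\partial K|\gtrsim|K|^{(d-1)/d}$. For the remaining terms I bound $\norm{w}_{0,P_F}\le\norm{w}_{0,K}\le\tfrac{h_K}{\pi}\norm{\nabla v}_{0,\operatorname{conv}(K)}$ and collect the $O(n_K)$ faces to get a contribution $\lesssim \tfrac{|K|^{1/2}h_K^{d/2}}{|\partial K|}\norm{\nabla v}_{0,\operatorname{conv}(K)}$.

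The last bound closes to $h_K\norm{\nabla v}_{0,\operatorname{conv}(K)}$ exactly when $|\partial K|\gtrsim h_K^{d-1}$, and establishing this is the main obstacle: under Assumptions~\hyperref[asp:A]{\bf A}--\hyperref[asp:B]{\bf B} an individual face may be arbitrarily thin, so $|\partial K|$ cannot be controlled by any single face and the estimate must aggregate the whole boundary. For $d=2$ this is elementary, since a set of diameter $h_K$ has perimeter at least $2h_K$. For $d=3$ I would argue geometrically: a pigeonhole on the projection of $\partial K$ onto the diameter direction, using the uniform bound on $n_K$ (Assumption~\hyperref[asp:A]{\bf A}), produces a face $F^{*}$ with $h_{F^{*}}\gtrsim h_K$; Assumption~\hyperref[asp:B]{\bf B} then inserts into $K$ a pyramid over $F^{*}$ of height $\gtrsim h_K$, so $K$ contains a two-dimensionally non-degenerate triangle whose orthogonal shadow $\pi_{e^{\perp}}(K)$ on a suitable hyperplane has area $\gtrsim h_K^{2}$; since any such shadow is covered by $\partial K$ with multiplicity at least two, $|\partial K|\ge 2\,|\pi_{e^{\perp}}(K)|\gtrsim h_K^{2}=h_K^{d-1}$. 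Combining this with the two contributions above gives $\norm{v-\overline{v}^{\partial K}}_{0,K}\lesssim h_K\norm{\nabla v}_{0,\operatorname{conv}(K)}$, as claimed.
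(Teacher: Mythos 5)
Your proof is correct in substance, and it shares the paper's skeleton up to the decisive step: both split off $\overline{v}^{K}$ by the triangle inequality, bound $\norm{v-\overline{v}^{K}}_{0,K}$ by Lemma~\ref{lemma:poincare-convex}, write the constant $\overline{v}^{K}-\overline{v}^{\partial K}$ as a weighted sum of face integrals with the weights $|F|$ retained, and invoke the trace inequality \eqref{eq:tr-h}. Where you diverge is the final aggregation. The paper closes purely by summation: from $|F|\lesssim h_F^{d-1}$ it gets $\sum_{F}(|F|/h_F)^{1/2}\lesssim n_K^{d/(2(d-1))}|\partial K|^{(d-2)/(2(d-1))}$ by H\"older, and the resulting prefactor $|K|^{1/2}/|\partial K|^{d/(2(d-1))}$ is $\lesssim 1$ by the isoperimetric inequality $|K|\leq C_d|\partial K|^{d/(d-1)}$ alone --- no lower bound on $|\partial K|$ is ever needed. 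You instead use the cruder bound $|F|^{1/2}h_F^{-1/2}\lesssim h_K^{(d-2)/2}$, which forces you to establish the genuinely additional geometric fact $|\partial K|\gtrsim h_K^{d-1}$ via the diameter pigeonhole and the shadow of an Assumption-\hyperref[asp:B]{\bf B} pyramid. Your route buys an intrinsic statement about elements satisfying Assumptions \hyperref[asp:A]{\bf A--B} (boundary area comparable to $h_K^{d-1}$), but at the cost of an extra geometric lemma that the paper's H\"older-plus-isoperimetric argument elegantly sidesteps; your treatment of the gradient terms, with $|F|^{1/2}h_F^{1/2}\eqsim|P_F|^{1/2}$, bounded overlap of the pyramids, and $|K|/|\partial K|\lesssim|K|^{1/d}\lesssim h_K$, is a fine localized variant of the same isoperimetric use (the paper simply folds those terms into the same face sum via $h_F\leq h_K$).

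One repair in your geometric lemma: when the piece $F_\beta$ of $F^{*}$ supplied by the decomposition in Assumption \hyperref[asp:B]{\bf B} is non-convex, the chord realizing its diameter need not lie in $F_\beta$, so the ``non-degenerate triangle'' you exhibit need not be contained in $K$. The conclusion survives if you project the pyramid $P(F_\beta,l,\bs x_P)\subset K$ itself onto the $2$-plane spanned by the chord direction and the normal of $F^{*}$: since orthogonal projection is linear, it commutes with the cone construction, so the shadow is exactly a planar triangle whose base is the projection of $F_\beta$ onto the chord line (length $\gtrsim h_K$ after pigeonholing over the uniformly bounded number of pieces $|B_1|$ --- note this uses Assumption \hyperref[asp:B]{\bf B}, not only the face count $n_K$ from \hyperref[asp:A]{\bf A}) and whose height is $l\geq\gamma_1 h_{F^{*}}\gtrsim h_K$; hence the shadow of $K$ has area $\gtrsim h_K^2$ and $|\partial K|\geq 2\,|\pi(K)|$ finishes as you say.
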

\begin{proof}
First triangle inequality implies
\begin{equation}
\label{eq:poincare-b1}
\big\Vert{v- \overline{v}^{\partial K}}\big\Vert_{0,K}\leq 
\norm{v-\overline{v}^{K}}_{0,K}   
+ \big\Vert{\overline{v}^{K}- \overline{v}^{\partial K}}\big\Vert_{0,K},
\end{equation}
where the first term can be estimated by Lemma \ref{lemma:poincare-convex}. 
Rewriting the second term above and using the Cauchy--Schwarz inequality yield:
\begin{equation}
\label{eq:poincare-b2}
\begin{aligned}
 \big\Vert{\overline{v}^{K}- \overline{v}^{\partial K}}\big\Vert_{0,K}
&= |K|^{1/2}\left|\frac{1}{|\partial K|}
\int_{\partial K} (\overline{v}^{K} - v ) \dd S  \right|
\\
&\leq 
 \frac{|K|^{1/2}}{|\partial K|}  \sum_{F\in \mathcal{F}_h(K)}
 |F|^{1/2}\norm{v- \overline{v}^{K}}_{0,F}.
\end{aligned}
\end{equation}
By the trace inequality in Lemma \ref{lemma:trace-l2}
\begin{equation}
\norm{v- \overline{v}^{K}}_{0,F}
\leq h_F^{-1/2} \norm{v- \overline{v}^{K}}_{0,K} 
+ h_F^{1/2} \norm{\nabla v}_{0,K}.
\end{equation}
Applying the Poincar\'{e} inequality in 
Lemma \ref{lemma:poincare-convex} on $\norm{v- \overline{v}^{K}}_{0,K}$ and the fact 
that $h_F\leq h_K$ yields:
\begin{equation}
\label{eq:poincare-1}
 \big\Vert{\overline{v}^{K}- \overline{v}^{\partial K}}\big\Vert_{0,K}
\leq
\frac{h_K|K|^{1/2}}{|\partial 
K|} \sum_{F\in \mathcal{F}_h(K)} \left(\frac{|F|}{h_F}\right)^{1/2}
\norm{\nabla v}_{0,\operatorname{conv}(K)}.
\end{equation}
As $|F|\lesssim h_F^{d-1}$ and $|\partial K| = \sum_{F\in \mathcal{F}_h(K)} |F|$,
$$
\sum_{F\in \mathcal{F}_h(K)} \left(\frac{|F|}{h_F}\right)^{1/2} \lesssim \sum_{F\in 
\mathcal{F}_h(K)} |F|^{\frac{d-2}{2(d-1)}} \leq n_K^{\frac{d}{2(d-1)} } 
|\partial K|^{\frac{d-2}{2(d-1)}}.
$$
Then
$$
\frac{|K|^{1/2}}{|\partial K|} \sum_{F\in \mathcal{F}_h(K)} 
\left(\frac{|F|}{h_F}\right)^{1/2} \lesssim C(n_K) 
\frac{|K|^{1/2}}{|\partial K|^{d/2(d-1)}} \lesssim C(n_K),
$$
where in the last step, we have used the isoperimetric inequality $|K| \leq C_d 
|\partial K|^{d/(d-1)}$. 
\end{proof}

\section{A Priori Error Analysis}
\label{sec:error}
The error analysis will be performed under a mesh dependent norm $\tnorm{\cdot}$ 
induced by $a_h(\cdot,\cdot)$, i.e., for $v\in H_0^1(\Omega) + V_h$
\begin{equation}
\label{eq:norm-ah}
\tnorm{v}^2 := a_h(v, v) = \sum_{K\in \mathcal  T_h} \left ( \norm{\nabla \Pi_K 
v}^2_K 
+ h_{\omega_K}^{-1}\sum_{F\in \mathcal F_h(K)}\norm{Q_F(v-\Pi_{\omega_K} v)}^2_F 
\right ).
\end{equation}
which is weaker than the $H^1$-seminorm $|\cdot|_{1}$ upon which the conventional 
VEM analysis is built. We denote the local norm on $K$ as $\tnorm{\cdot}_K$. As all 
projections $\Pi_K, \Pi_{\omega_K},$ and $Q_F$ can 
be computed using only on the DoFs (see \eqref{eq:rhspr} and 
\eqref{eq:pr-pw}), it is 
straightforward to verify that $\tnorm{v - v_I} = 0$ for the interpolant $v_I$ 
defined using DoFs in \eqref{eq:intp}.

\subsection{A mesh dependent norm}
Firstly, the following 
lemma is needed for proving $\tnorm{\cdot}$ is a norm on $V_h$ which bounds the 
projection $\Pi_{\omega_K}$ in an extended element 
measured in the $H^1$-seminorm.

\begin{lemma}[Bound of the projection $\Pi_{\omega_K}$ on the patch]
\label{lemma:pr-split}
Let $\overline{\omega} = \overline{\cup_{\alpha\in A}K_{\alpha}}$, for $v\in 
H^1(\omega)$, the following estimate holds:
\begin{equation}
\label{eq:pr-split}
\norm{\nabla \Pi_{\omega} v}_{0,\omega}^2 \leq \sum_{\alpha\in A}
\norm{\nabla \Pi_{K_{\alpha}} v}_{0,K_{\alpha}}^2. 
\end{equation}
\end{lemma}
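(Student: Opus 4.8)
The plan is to reduce both sides of \eqref{eq:pr-split} to explicit expressions in the cell averages of $\nabla v$ and then to invoke the Cauchy--Schwarz inequality. The starting point is the same identity that underlies the element-wise projection, where one has $\nabla \Pi_K v = Q_K(\nabla v)$: testing the defining relation \eqref{eq:pr-pw} with $q$ ranging over the coordinate functions $x_1,\dots,x_d$ shows that $\nabla \Pi_{\omega} v$ is a single constant vector. Since $v\in H^1(\omega)$ carries no interface jumps, $\sum_{\alpha\in A}(\nabla v,\nabla q)_{K_\alpha}=(\nabla v,\nabla q)_{\omega}$, so this constant is exactly the average of $\nabla v$ over the whole patch. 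I would thus record
$$
\nabla \Pi_{\omega} v = \frac{1}{|\omega|}\sum_{\alpha\in A}\int_{K_\alpha}\nabla v,
\qquad
\nabla \Pi_{K_\alpha} v = \frac{1}{|K_\alpha|}\int_{K_\alpha}\nabla v .
$$

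Writing $\bm{m}_\alpha:=\int_{K_\alpha}\nabla v$ and using that the $K_\alpha$ tile $\omega$ so that $|\omega|=\sum_{\alpha}|K_\alpha|$, the target inequality \eqref{eq:pr-split} becomes the purely algebraic statement
$$
\frac{1}{\sum_{\alpha}|K_\alpha|}\,\Big|\sum_{\alpha}\bm{m}_\alpha\Big|^2
\;\leq\;
\sum_{\alpha}\frac{|\bm{m}_\alpha|^2}{|K_\alpha|}.
$$
This I would settle componentwise: for each Cartesian component, factoring $(m_\alpha)_i=|K_\alpha|^{1/2}\cdot (m_\alpha)_i/|K_\alpha|^{1/2}$ and applying the scalar Cauchy--Schwarz inequality gives $\big(\sum_\alpha (m_\alpha)_i\big)^2\leq\big(\sum_\alpha |K_\alpha|\big)\big(\sum_\alpha (m_\alpha)_i^2/|K_\alpha|\big)$, and summing over $i=1,\dots,d$ yields the claim.

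An alternative, more structural route I would keep in reserve is to argue entirely by orthogonal projections in $L^2(\omega)^d$: the gradient $\nabla \Pi_{\omega} v$ is the $L^2$-projection of $\nabla v$ onto the constant vector fields on $\omega$, and these form a subspace of the piecewise-constant fields subordinate to $\{K_\alpha\}$, whose projection has squared norm exactly $\sum_\alpha\norm{\nabla \Pi_{K_\alpha} v}_{0,K_\alpha}^2$; composing projections onto nested subspaces and using that an orthogonal projection is a contraction then gives \eqref{eq:pr-split} at once. Either way the content is elementary, and I do not anticipate a genuine obstacle. The only step requiring any care is the first one, namely correctly identifying $\nabla \Pi_{\omega} v$ and $\nabla \Pi_{K_\alpha} v$ as the respective averages of $\nabla v$, so that the constraint \eqref{eq:pr-constraint-aniso}, which fixes only the additive constant and not the gradient, plays no role in the estimate.
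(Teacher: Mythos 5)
Your proof is correct, but your main route differs from the paper's. The paper argues variationally: for arbitrary $q\in\mathbb{P}_1(\omega)$ it writes $\left(\nabla\Pi_{\omega}v,\nabla q\right)_{\omega}=\sum_{\alpha}\left(\nabla\Pi_{K_{\alpha}}v,\nabla q\right)_{K_{\alpha}}$ using that $q\vert_{K_{\alpha}}\in\mathbb{P}_1(K_{\alpha})$ and the defining property \eqref{eq:pr} on each element, then applies the Cauchy--Schwarz inequality elementwise followed by an $\ell^2$--$\ell^2$ H\"older inequality, and concludes by taking $q=\Pi_{\omega}v$. You instead exploit the lowest-order structure explicitly: identifying $\nabla\Pi_{\omega}v$ and $\nabla\Pi_{K_{\alpha}}v$ as the averages of $\nabla v$ over $\omega$ and $K_{\alpha}$ (which the paper itself records as $\nabla\Pi_K v=Q_K(\nabla v)$), you reduce \eqref{eq:pr-split} to the discrete inequality $\bigl|\sum_{\alpha}\bm{m}_{\alpha}\bigr|^2\leq\bigl(\sum_{\alpha}|K_{\alpha}|\bigr)\bigl(\sum_{\alpha}|\bm{m}_{\alpha}|^2/|K_{\alpha}|\bigr)$, which is elementary and makes the sharpness of the constant transparent; your correct side remarks that no jump terms arise for $v\in H^1(\omega)$ and that the constraint \eqref{eq:pr-constraint-aniso} fixes only the additive constant are exactly the points needing care. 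The trade-off is that this computation is tied to $k=1$, where gradients of the projections are constants, whereas the paper's duality argument (and equally your reserve argument via nested orthogonal projections in $L^2(\omega)^d$, which is precisely the paper's proof in abstract form: a projection onto a subspace of the piecewise polynomial gradient fields is a contraction of the piecewise projection) extends verbatim to higher-order elliptic projections. Both routes are sound, and either suffices for the paper's purposes.
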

\begin{proof}
By definition \eqref{eq:pr}, for any $q\in \mathbb{P}_1(\omega)$, 
$q\big\vert_{K_{\alpha}} \in  \mathbb{P}_1(K_{\alpha})$, thus by \revision{the 
Cauchy-Schwarz inequality and an $\ell^2$--$\ell^2$ H\"{o}lder inequality}, we have
\begin{equation}
\begin{aligned}
& \left(\nabla \Pi_{\omega}  v, \nabla q\right)_{\omega} = 
\left(\nabla  v , \nabla q\right)_{\omega} 
= \sum_{\alpha\in A} \left(\nabla  v , \nabla q\right)_{K_{\alpha}}
= \sum_{\alpha\in A} 
\left(\nabla \Pi_{K_{\alpha}}  v , \nabla q\right)_{K_{\alpha}}
\\
\leq  & \sum_{\alpha\in A} 
\norm{\nabla \Pi_{K_{\alpha}}  v}_{0,K_{\alpha}}  \norm{\nabla q}_{0,K_{\alpha}}
\leq \left(\sum_{\alpha\in A} 
\norm{\nabla \Pi_{K_{\alpha}}  v}_{0,K_{\alpha}}^2 \right)^{1/2}\norm{\nabla 
q}_{0,\omega}.
\end{aligned}
\end{equation}
The lemma then follows from letting $q=\Pi_{\omega} v$. 
\end{proof}

\begin{lemma}
\label{lemma:norm}
$\tnorm{\cdot}$ defines a norm on the nonconforming VEM space $V_h$.
\end{lemma}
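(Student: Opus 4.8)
The plan is to check the three norm axioms for $\tnorm{\cdot}$ on $V_h$, reducing everything to one substantive point. Homogeneity and non-negativity are immediate: by \eqref{eq:norm-ah}, $\tnorm{v}^2 = a_h(v,v)$ is a nonnegative quadratic form, and each of $\Pi_K$, $\Pi_{\omega_K}$, $Q_F$ is linear in $v$, so $\tnorm{\lambda v} = |\lambda|\,\tnorm{v}$ and $\tnorm{v}\geq 0$. Since $a_h(\cdot,\cdot)$ is symmetric with $a_h(v,v)\geq 0$, it is a semi-inner product, and the triangle inequality for $\tnorm{\cdot}$ follows from Cauchy--Schwarz for semi-inner products. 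Thus $\tnorm{\cdot}$ is a seminorm, and the only nontrivial claim is definiteness: I must show $\tnorm{v}=0 \Rightarrow v=0$ for $v\in V_h$.

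So I would take $v\in V_h$ with $\tnorm{v}=0$. Because \eqref{eq:norm-ah} is a sum of squares, every summand vanishes. The first group gives $\norm{\nabla \Pi_K v}_K = 0$, hence $\nabla \Pi_K v = 0$ and $\Pi_K v$ is constant on each $K\in\mathcal{T}_h$. Feeding this into Lemma \ref{lemma:pr-split} with $\omega=\omega_K$ yields $\norm{\nabla \Pi_{\omega_K} v}^2_{0,\omega_K} \leq \sum_{\alpha}\norm{\nabla \Pi_{K_\alpha} v}^2_{0,K_\alpha} = 0$, so $\Pi_{\omega_K} v$ is itself a constant, say $c_K$, on the patch $\omega_K$. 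The second group gives $\norm{Q_F(v-\Pi_{\omega_K} v)}_F = 0$, hence $Q_F(v-\Pi_{\omega_K} v)=0$, for every $K$ and every $F\in\mathcal{F}_h(K)$.

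Next I would translate the vanishing face terms into the DoFs. Since $Q_F v = \chi_F(v)$ and $\Pi_{\omega_K} v \equiv c_K$ on $\omega_K$, the condition $Q_F(v-\Pi_{\omega_K} v)=0$ reads $\chi_F(v) = c_K$ for every $F\in\mathcal{F}_h(K)$; that is, all face DoFs of $K$ share the single value $c_K$. The decisive step is then a connectivity argument. For an interior face $F$ shared by $K$ and $K'$, the weak continuity $\int_F \jump{v}{F}\,\dd S = 0$ in \eqref{eq:space-global} makes $\chi_F(v)$ single-valued, forcing $c_K = c_{K'}$; since $\Omega$ is connected, the $c_K$ coincide across the whole mesh with one value $c$. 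For a boundary face $F\subset\partial\Omega$, the convention $\jump{v}{F}=v|_F$ combined with \eqref{eq:space-global} gives $\chi_F(v)=0$, so $c=0$ and hence $\chi_F(v)=0$ for all $F\in\mathcal{F}_h$. Unisolvence of the triple $(K, V_h(K), \mathcal{N}(K))$ then forces $v|_K = 0$ on each element, i.e. $v=0$. I expect the propagation of the constant $c_K$ through the mesh — correctly invoking both the interior weak-continuity condition and the boundary condition baked into $V_h$ in \eqref{eq:space-global} — to be the only delicate point; everything else is immediate once the sum of squares is split term by term.
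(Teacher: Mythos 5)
Your proof is correct, and it reaches the conclusion by a genuinely different organization of the middle of the argument. After the common opening --- splitting $\tnorm{v_h}^2 = 0$ into $\nabla \Pi_K v_h = \bm{0}$ on every element and $Q_F(v_h - \Pi_{\omega_K} v_h) = 0$ on every face, then invoking Lemma \ref{lemma:pr-split} to get $\nabla \Pi_{\omega_K} v_h = \bm{0}$ --- the paper stays at the function level: it computes $\norm{\nabla v_h}_{0,K}^2 = \bigl(\nabla v_h, \nabla(v_h - \Pi_{\omega_K} v_h)\bigr)_K$, integrates by parts, uses $\Delta v_h = 0$ and $\nabla v_h\cdot\bm{n}\big\vert_F \in \mathbb{P}_0(F)$ to insert $Q_F$ into the boundary pairing, and concludes $\nabla v_h = \bm{0}$ elementwise; only then does it propagate the resulting elementwise constants to zero via the continuity and boundary conditions in \eqref{eq:space-global}. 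You instead work at the DoF level: since $\Pi_{\omega_K} v_h \equiv c_K$ is constant, the vanishing stabilization terms say $\chi_F(v_h) = c_K$ on every face of $K$; weak continuity across interior faces makes the $c_K$ a single mesh-wide constant, boundary faces force that constant to vanish, and elementwise unisolvence of the triple $(K, V_h(K), \mathcal{N}(K))$ finishes. The two routes are essentially dual: the integration-by-parts computation you avoid is precisely the standard verification of the unisolvence you cite (which the paper asserts without proof when calling the triple a finite element in the sense of Ciarlet), so your argument outsources that energy identity rather than eliminating it. What your route buys is a cleaner treatment of general patches --- the paper restricts itself, nominally without loss of generality, to $\omega_K = K \cup K'$ --- and an explicit role for the constant value $c_K$ of $\Pi_{\omega_K} v_h$; what the paper's route buys is self-containedness. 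Two tacit points are shared by both proofs and worth noting: Lemma \ref{lemma:pr-split} is stated for $v \in H^1(\omega)$ but is applied to $v_h \in V_h$, which is only piecewise $H^1$, so one must read it through the broken-gradient definition \eqref{eq:pr-pw}; and your connectivity step (like the paper's closing sentence) silently uses that $\Omega$, hence the mesh through shared faces, is connected.
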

\begin{proof}
Since each component of $\tnorm{\cdot}$ supports the triangle inequality and is 
scalable, it suffices to verify that if $\tnorm{v_h} = 0$ for $v_h\in V_h$,  
then $v_h\equiv 0$. By definition, $a_h(v_h,v_h)=\tnorm{v_h}^2 = 0$ implies that
\begin{equation}
\label{eq:norm-1}
\nabla \Pi_K  v_h =\bm{0}, \; \forall K\in \mathcal{T}_h; \quad 
Q_F(v_h- \Pi_{\omega_K} v_h) = 0 \text{ on } F, \; \forall F\in 
\mathcal{F}_h(K).
\end{equation}
Without the loss of generality, we assume that $\omega_K$ consists $K$ and $K'$ 
sharing a face, which covers the case of $\omega_K = K$ while can be generalized to 
the case where $\omega_K$ contains three or more elements. 

Firstly by Lemma \ref{lemma:pr-split}, $\nabla \Pi_{\omega_K} v_h = \bm{0}$ 
since $\nabla \Pi_{K} v_h = \nabla \Pi_{K'} v_h=\bm{0}$. Restricting 
ourselves on $K$, consider the following quantity:
\begin{equation}
\begin{aligned}
& \;\norm{\nabla v_h}_{0,K}^2  = 
\bigl(\nabla v_h, \nabla(v_h -\Pi_{\omega_K} v_h) \bigr)_K
\\
& = -\bigl(\Delta v_h, v_h - \Pi_{\omega_K} v_h \bigr)_K+ 
\left\langle \nabla v_h\cdot \bm{n}, v_h - \Pi_{\omega_K} v_h 
\right\rangle_{\partial K}
\\
&= \sum_{F\in \mathcal{F}_h(K)} 
\bigl(\nabla v_h\cdot \bm{n}, 
Q_F(v_h - \Pi_{\omega_K} v_h) \bigr)_{F} = 0.
\end{aligned}
\end{equation}
In the last step, $\Delta v_h = 0$ in $K$ is used. Since $\nabla v_h\cdot \bm{n}\in 
\mathbb{P}_{0}(F)$, the $L^2$ projection $Q_F$ can be inserted into the pair. 

As a result of \eqref{eq:norm-1}, in every $K$, $\nabla v_h = \bm{0}$ thus $v_h = 
\text{constant}$. Finally, by the boundary condition and the continuity condition in 
\eqref{eq:space-global}, $v_h \equiv 0$. 
\end{proof}

\subsection{\revision{A priori error estimates on isotropic elements}}
Next, an error equation is developed for the lowest order nonconforming VEM 
following~\cite{Cao;Chen:2018Anisotropic,Mu;Wang;Ye:2015Galerkin}, and the a priori 
error analysis on isotropic elements is established.

\begin{lemma}[An error equation]
\label{lemma:error-omega}
Let $u_h$ and $u_I$ be the solution to problem \eqref{eq:VEM} and the 
canonical interpolation in \eqref{eq:intp} respectively, and let $u_{\pi}$ be any 
piecewise linear polynomial on $\mathcal T_h$, for any $v_h\in V_h$ and  
stabilization $S_K(\cdot,\cdot)$, it holds that
\begin{equation}
\label{eq:error}
\begin{aligned}
a_h(u_h-u_I,v_h)  =& \sum_{K\in \mathcal{T}_h}\sum_{F\in \mathcal{F}_h(K)} 
\left\langle \nabla (u -u_{\pi} )\cdot \bm{n}, 
Q_F v_h - \Pi_K v_h\right\rangle_{F}
\\
\quad & -  \sum_{K\in \mathcal{T}_h} 
S_{K}\bigl(u_I-\Pi_{\omega_K} u_I, v_h - \Pi_{\omega_K} v_h\bigr).
\end{aligned}
\end{equation}
\end{lemma}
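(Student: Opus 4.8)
The plan is to expand $a_h(u_h-u_I,v_h)$ using the definition of the bilinear form, split it additively into the consistency (elliptic projection) part and the stabilization part, and then process each separately. The key identity driving everything is the discrete equation~\eqref{eq:VEM} satisfied by $u_h$, namely $a_h(u_h,v_h)=\sum_K (f,\Pi_K v_h)_K$, together with the continuous PDE. So first I would write $a_h(u_h-u_I,v_h)=a_h(u_h,v_h)-a_h(u_I,v_h)$ and replace the first term by $\sum_K(f,\Pi_K v_h)_K$. Using $-\Delta u=f$ (from~\eqref{eq:model-weak}) and the fact that $\nabla\Pi_K v_h$ is the best constant approximation of $\nabla v_h$, I expect the right-hand side to rearrange into boundary terms after integration by parts.

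The main work is the consistency term. Here the idea is to introduce the arbitrary piecewise linear $u_\pi$ as a ``free'' comparison function: since $u_\pi$ is linear on each $K$, one has $(\nabla u_\pi,\nabla\Pi_K v_h)_K=(\nabla u_\pi,\nabla v_h)_K$ and the elliptic projection reproduces polynomials, so subtracting and adding $u_\pi$ creates the difference $\nabla(u-u_\pi)$ that appears in~\eqref{eq:error}. Concretely, I would write
\begin{equation*}
(f,\Pi_K v_h)_K=(-\Delta u,\Pi_K v_h)_K=(\nabla u,\nabla\Pi_K v_h)_K-\langle\nabla u\cdot\bm{n},\Pi_K v_h\rangle_{\partial K},
\end{equation*}
and likewise isolate the polynomial part $(\nabla\Pi_K u_I,\nabla\Pi_K v_h)_K$ coming from $a_h(u_I,v_h)$. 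Since $\nabla\Pi_K u_I=Q_K(\nabla u_I)$ and by Lemma~\ref{lemma:pr-id} the projection depends only on DoFs (so $\Pi_K u_I=\Pi_K u$), the volume terms $(\nabla u,\nabla\Pi_K v_h)_K-(\nabla\Pi_K u_I,\nabla\Pi_K v_h)_K$ should cancel against each other up to the $u_\pi$ bookkeeping, leaving only the face integral $\langle\nabla(u-u_\pi)\cdot\bm{n},\,Q_F v_h-\Pi_K v_h\rangle_F$. The appearance of $Q_F v_h$ rather than $v_h$ comes from $\nabla u_\pi\cdot\bm{n}\in\mathbb P_0(F)$, which lets me replace $v_h$ by its face average in the boundary pairing; the subtraction of $\Pi_K v_h$ is legitimate because summing $\langle\nabla u_\pi\cdot\bm{n},\Pi_K v_h\rangle$ over faces reconstructs the volume integral of the two linear polynomials and closes the consistency identity.

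The stabilization term is comparatively direct: from the definition~\eqref{eq:bilinear-aniso}, $a_h(u_h,v_h)$ and $a_h(u_I,v_h)$ each carry a term $S_K(\,\cdot-\Pi_{\omega_K}\,\cdot,\,v_h-\Pi_{\omega_K}v_h)$, and the stabilization part of $a_h(u_h-u_I,v_h)$ is $S_K(u_h-u_I-\Pi_{\omega_K}(u_h-u_I),\,v_h-\Pi_{\omega_K}v_h)$. Because $u_h$ itself solves~\eqref{eq:VEM}, the contribution of $u_h$ to this stabilization gets absorbed into the $(f,\Pi_K v_h)$ manipulation above (it is already counted in $a_h(u_h,v_h)$), so what survives is exactly $-S_K(u_I-\Pi_{\omega_K}u_I,\,v_h-\Pi_{\omega_K}v_h)$, matching the second line of~\eqref{eq:error}. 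The step I expect to be most delicate is the bookkeeping of which $\Pi_{\omega_K}$ and $\Pi_K$ terms belong to the consistency part versus the stabilization part, and in particular making sure the replacement of $v_h$ by $Q_F v_h-\Pi_K v_h$ in the face integrals is exact (not merely approximate) using $\nabla u_\pi\cdot\bm{n}\in\mathbb P_0(F)$ and the defining property~\eqref{eq:pr} of $\Pi_K$; getting the consistency cancellation to land precisely on $\nabla(u-u_\pi)$ with no residual volume term is the crux.
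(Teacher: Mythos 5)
Your outline tracks the paper's own proof almost step for step: expand $a_h(u_h-u_I,v_h)=a_h(u_h,v_h)-a_h(u_I,v_h)$, replace $a_h(u_h,v_h)$ by $\sum_K(f,\Pi_K v_h)_K$ via \eqref{eq:VEM} (which is also the correct accounting for why no stabilization of $u_h$ survives, only $-S_K(u_I-\Pi_{\omega_K}u_I,\,v_h-\Pi_{\omega_K}v_h)$), integrate $(-\Delta u,\Pi_K v_h)_K$ by parts, cancel the volume terms exactly using $\Pi_K u_I=\Pi_K u$ from Lemma \ref{lemma:pr-id} together with \eqref{eq:pr} (no ``$u_\pi$ bookkeeping'' is needed there), and dispose of the $u_\pi$ contribution through the element-wise identity $\sum_{F\in\mathcal F_h(K)}\left\langle\nabla u_\pi\cdot\bm n,\,Q_Fv_h-\Pi_K v_h\right\rangle_F=0$, which indeed follows from $\Delta u_\pi=0$, $\nabla u_\pi\cdot\bm n\in\mathbb P_0(F)$, and the defining property \eqref{eq:pr} of $\Pi_K$. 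All of that is sound and is precisely the paper's argument.

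However, there is a genuine gap at exactly the point you flag as ``the crux'': the appearance of $Q_Fv_h$ against the \emph{exact} flux. After your cancellations the computation leaves $-\sum_K\left\langle\nabla u\cdot\bm n,\Pi_K v_h\right\rangle_{\partial K}$, and to reach \eqref{eq:error} you must insert $\sum_K\sum_{F\in\mathcal F_h(K)}\left\langle\nabla u\cdot\bm n,Q_Fv_h\right\rangle_F$ and show it vanishes. Your stated justification --- that $\nabla u_\pi\cdot\bm n\in\mathbb P_0(F)$ permits replacing $v_h$ by its face average --- only covers the $u_\pi$ half of $\nabla(u-u_\pi)$; for the exact solution, $\nabla u\cdot\bm n$ is generally \emph{not} constant on $F$, so inserting $Q_F$ is not a face-local identity and the added term does not vanish element by element. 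It vanishes only globally: since $\Delta u=-f\in L^2(\Omega)$, $\nabla u\in H(\operatorname{div};\Omega)$ and its normal trace is single-valued across interior faces; $Q_Fv_h$ is single-valued there because the weak continuity $\int_F\jump{v_h}{F}\dd S=0$ in \eqref{eq:space-global} forces the two face averages to agree; and $Q_Fv_h=0$ on boundary faces. Hence the inserted sum telescopes to zero over the whole mesh. This is where the nonconformity of $V_h$ is actually paid for --- your proposal never invokes the continuity condition in \eqref{eq:space-global} at all, and without it the lemma is false, so this step cannot be left implicit.
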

\begin{proof}
Using the VEM discretization problem \eqref{eq:VEM}, the original PDE $-\Delta u = 
f$, the definition of the elliptic projection \eqref{eq:pr}, and 
the integration by parts, we have
\begin{equation}
\label{eq:err-intbyparts}
\begin{aligned}
& \;\quad a_h(u_h-u_I,v_h)
\\ 
& = \sum_{K\in \mathcal  T_h} 
\bigl(f,\Pi_K v_h\bigr)_K - a_h(u_I,v_h)
=\sum_{K\in \mathcal  T_h} 
\bigl(-\Delta u,\Pi_K v_h\bigr)_K - a_h(u_I,v_h)
\\
&= \sum_{K\in \mathcal  T_h} 
\bigl(\nabla \Pi_K u, \nabla \Pi_K v_h\bigr)_K
- \sum_{K\in \mathcal  T_h} 
\left\langle \nabla u \cdot \bm{n}, 
\Pi_K v_h\right\rangle_{\partial K} - a_h(u_I,v_h)
\\
& = \sum_{K\in \mathcal  T_h} 
\bigl(\nabla \Pi_K (u-u_I), \nabla \Pi_K v_h\bigr)_K
+ \sum_{K\in \mathcal  T_h} \sum_{F\in \mathcal{F}_h(K)}
\left\langle \nabla u \cdot \bm{n}, 
Q_F v_h - \Pi_K v_h\right\rangle_{F}
\\
& \quad -\sum_{K\in \mathcal  T_h} S_{K}\bigl(u_I- \Pi_{\omega_K} u_I, 
v_h- \Pi_{\omega_K} v_h\bigr).
\end{aligned}
\end{equation}
\revision{We note that in the derivation above, on each face $F$, $Q_F v_h$ which is 
single-valued on $F$ can be freely inserted into boundary integrals since the 
inter-element jump of $\nabla u\cdot \bm{n}$ on $F$ vanishes by the assumption that 
$f\in L^2(\Omega)$.} 

Moreover, since $\chi_F(u-u_I) = 0$ for all faces $F$ by 
\eqref{eq:intp-loc}, by Lemma 
\ref{lemma:pr-id} we have $\Pi_K (u-u_I)= 0$, the first 
term in \eqref{eq:err-intbyparts} vanishes. Lastly, using the fact that in the 
lowest order case, since 
$u_{\pi}\in\mathbb{P}_1(K)$, $\Delta u_{\pi} = 0$, the following zero term can be 
inserted into the boundary integral in \eqref{eq:err-intbyparts} to get 
\eqref{eq:error}:
\[
\begin{aligned}
&\sum_{F\in \partial K} \left\langle \nabla u_{\pi} \cdot \bm{n}, 
Q_F v_h - \Pi_K v_h\right\rangle_{F}
= \left\langle \nabla u_{\pi} \cdot \bm{n},  
v_h - \Pi_K v_h\right\rangle_{\partial K}
\\
=& \,\bigl(\Delta u_{\pi},  v_h - \Pi_K 
v_h\bigr)_K+ \bigl(\nabla u_{\pi}, \nabla (v_h - \Pi_K 
v_h)\bigr)_K=0.
\end{aligned}
\]
\end{proof}

\begin{lemma}[An a priori error estimate on isotropic meshes] 
\label{lemma:err-estimate-iso}
Under the same setting with Lemma \ref{lemma:error-omega}, when the mesh 
$\mathcal{T}_h$ satisfies Assumptions \hyperref[asp:B]{\bf A--B}, it 
holds that
\begin{equation}
\label{eq:err-estimate-iso}
\begin{aligned}
\tnorm{u_h-u_I}^2 \lesssim  & \; \sum_{K\in \mathcal T_h}\sum_{F\in 
\mathcal{F}_h(K)}
h_{K}\norm{\nabla\big(u - \Pi_{K} u \big)\cdot \bm{n}}_{0,F}^2\\
 \;+ &   
\sum_{K\in \mathcal T_h}\sum_{F\in \mathcal{F}_h(K)}
h_{K}^{-1} \norm{u-\Pi_{K} u}_{0,F}^2
\end{aligned}
\end{equation} 
\end{lemma}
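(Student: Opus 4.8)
The plan is to test the error equation of Lemma~\ref{lemma:error-omega} against $v_h = u_h - u_I$, so that the left-hand side becomes $\tnorm{u_h-u_I}^2 = a_h(u_h-u_I,u_h-u_I)$, and to make the element-wise choice $u_\pi|_K = \Pi_K u$, which is a legitimate piecewise linear polynomial. With this choice the two summands on the right of \eqref{eq:error} already carry $\nabla(u-\Pi_K u)\cdot\bm{n}$ and, after the interpolation identities, $u-\Pi_K u$. Since the mesh is isotropic, $\omega_K = K$, $h_{\omega_K}=h_K$, $\Pi_{\omega_K}=\Pi_K$, and Lemma~\ref{lemma:iso} gives $|K|\eqsim h_K^d$; I would use these simplifications throughout. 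The goal is to bound each summand by a product of (the square root of) the desired right-hand side and $\tnorm{u_h-u_I}$, and then cancel one power of $\tnorm{u_h-u_I}$.

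For the first (consistency) summand I would apply Cauchy--Schwarz face-by-face and insert the weights $h_K^{1/2}\cdot h_K^{-1/2}$, producing the factor $\bigl(\sum_K\sum_F h_K\norm{\nabla(u-\Pi_K u)\cdot\bm{n}}_{0,F}^2\bigr)^{1/2}$ against $\bigl(\sum_K\sum_F h_K^{-1}\norm{Q_F v_h - \Pi_K v_h}_{0,F}^2\bigr)^{1/2}$. The crux is to show the second factor is $\lesssim\tnorm{v_h}$. For this I would use the splitting $Q_F v_h - \Pi_K v_h = Q_F(v_h-\Pi_K v_h) - (\Pi_K v_h - \overline{\Pi_K v_h}^{F})$: the first piece is exactly the stabilization part of $\tnorm{v_h}^2$, while the second, by Lemma~\ref{lemma:poincare-linear-face} and $|\nabla_F\Pi_K v_h|\le|\nabla\Pi_K v_h|$, satisfies $h_K^{-1}\norm{\Pi_K v_h - \overline{\Pi_K v_h}^F}_{0,F}^2\lesssim h_K|F|\,|\nabla\Pi_K v_h|^2$. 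Summing over the uniformly bounded number of faces (Assumption~\textbf{A}) and using $h_K|\partial K|\lesssim h_K^d\eqsim|K|$ from Lemma~\ref{lemma:iso} bounds this by $\norm{\nabla\Pi_K v_h}_{0,K}^2$, the remaining part of $\tnorm{v_h}^2$.

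For the second (stabilization) summand I would again use Cauchy--Schwarz, separating the $u_I$-factor from the $v_h$-factor, the latter being $\le\tnorm{v_h}$ since it is precisely the stabilization part of the norm. The key reduction is that $\chi_F(u_I)=\chi_F(u)$ forces $Q_F u_I = Q_F u$, while Lemma~\ref{lemma:pr-id} forces $\Pi_K u_I = \Pi_K u$; hence $Q_F(u_I-\Pi_K u_I) = Q_F(u-\Pi_K u)$, and the contraction property of $Q_F$ gives $\norm{Q_F(u_I-\Pi_K u_I)}_{0,F}\le\norm{u-\Pi_K u}_{0,F}$. This bounds the summand by $\bigl(\sum_K\sum_F h_K^{-1}\norm{u-\Pi_K u}_{0,F}^2\bigr)^{1/2}\tnorm{v_h}$, the second piece of the target.

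Combining the two estimates with $v_h = u_h - u_I$ yields $\tnorm{u_h-u_I}^2\lesssim (\text{RHS})^{1/2}\,\tnorm{u_h-u_I}$; dividing by $\tnorm{u_h-u_I}$ (the case $\tnorm{u_h-u_I}=0$ being trivial) and squaring gives \eqref{eq:err-estimate-iso}. The main obstacle I anticipate is the second factor of the consistency summand, namely establishing $\sum_K\sum_F h_K^{-1}\norm{Q_F v_h - \Pi_K v_h}_{0,F}^2\lesssim\tnorm{v_h}^2$; everything else is routine Cauchy--Schwarz bookkeeping combined with the interpolation identities $Q_F u_I = Q_F u$ and $\Pi_K u_I = \Pi_K u$.
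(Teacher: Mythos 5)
Your proposal follows the paper's proof almost verbatim: same test function $v_h=u_h-u_I$, same choice $u_\pi=\Pi_K u$, the same reduction of the stabilization via $Q_Fu_I=Q_Fu$ and Lemma~\ref{lemma:pr-id} (giving $\Pi_K u_I=\Pi_K u$ and the contraction bound $\norm{Q_F(u-\Pi_Ku)}_{0,F}\le\norm{u-\Pi_Ku}_{0,F}$), and even the same splitting of $Q_Fv_h-\Pi_Kv_h$ — since $Q_F\Pi_Kv_h=\overline{\Pi_Kv_h}^F$, your decomposition is literally the paper's \eqref{eq:err-bd1}. The absorb-and-divide conclusion is also the paper's.

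However, there is one genuine gap, precisely at the step you identified as the crux. To bound $h_K^{-1}\norm{\Pi_Kv_h-\overline{\Pi_Kv_h}^F}_{0,F}^2$ you relax $h_F$ to $h_K$, sum to $h_K|\partial K|\,|\nabla\Pi_Kv_h|^2$, and then invoke Lemma~\ref{lemma:iso} to claim $h_K|\partial K|\lesssim h_K^d\eqsim|K|$. But Lemma~\ref{lemma:iso} is stated and proved under Assumptions \hyperref[asp:A]{\bf A--B--C}: its proof runs through the trace inequality of Lemma~\ref{lemma:trace-hK}, which requires the hourglass condition \hyperref[asp:C]{\bf C}. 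The present lemma assumes only \hyperref[asp:A]{\bf A--B}, so the lower bound $|K|\gtrsim h_K^d$ is not available to you, and your chain is not justified under the stated hypotheses. The paper avoids this by \emph{not} relaxing $h_F$ to $h_K$: it keeps the per-face quantity and uses the pyramid of Assumption \hyperref[asp:B]{\bf B} directly, via $|F|\,h_F\lesssim|F|\,l_F\le d\,|P_F|\le d\,|K|$ (summed over the boundedly many pieces $F_\beta$), which yields
\begin{equation*}
h_K^{-1}\norm{\Pi_Kv_h-\overline{\Pi_Kv_h}^F}_{0,F}^2
\le h_F\,|F|\,|\nabla\Pi_Kv_h|^2
\lesssim |K|\,|\nabla\Pi_Kv_h|^2
=\norm{\nabla\Pi_Kv_h}_{0,K}^2
\end{equation*}
using only \hyperref[asp:B]{\bf B} and $h_F\le h_K$, with Assumption \hyperref[asp:A]{\bf A} controlling the sum over faces. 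Replacing your $h_K|\partial K|\lesssim|K|$ step by this per-face bound repairs the argument without strengthening the hypotheses; as written, your version proves the lemma only under the additional Assumption \hyperref[asp:C]{\bf C}.
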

\begin{proof}
As $\mathcal{T}_h$ contains only isotropic elements, $\omega_K = K$ for all $K\in 
\mathcal{T}_h$. Let $u_{\pi} = \Pi_K u$ and $v_h = u_h - u_I$ in 
\eqref{eq:error}. Similarly to the proof of Lemma 
\ref{lemma:error-omega}, definition \eqref{eq:dofs-face} of DoFs with 
\eqref{eq:intp-loc} implies that $Q_F u = Q_F u_I$, hence $\Pi_{K} u_I = 
\Pi_{K} u$ by Lemma \ref{lemma:pr-id}. As a result, the stabilization term in 
\eqref{eq:error} can be estimated as follows:
\begin{equation}
\label{eq:err-sta}
\begin{aligned}
& \quad S_{K}\bigl(u_I- \Pi_{K} u_I, v_h- \Pi_{K} v_h\bigr)
\\
\leq &
\sum_{F\in \mathcal F_h(K)}h_K^{-1} \norm{Q_F (u_I-\Pi_{K} u_I)}_{0,F}
\norm{Q_F (v_h-\Pi_{K} v_h)}_{0,F}
\\
\leq &
\left(\sum_{F\in \mathcal F_h(K)}h_K^{-1} \norm{Q_F (u-\Pi_{K} 
u)}_{0,F}^2\right)^{1/2}
\left(\sum_{F\in \mathcal F_h(K)}h_K^{-1} \norm{Q_F (v_h-\Pi_{K} 
v_h)}_{0,F}\right)^{1/2},
\end{aligned}
\end{equation}
in which the first term can be estimated by $\norm{Q_F (u-\Pi_{K} 
u)}_{0,F} \leq \norm{ u-\Pi_{K} u}_{0,F}$, and the second term is a part of 
$\tnorm{v_h}$.
For the boundary integral term in \eqref{eq:error}, after using 
the Cauchy-Schwarz inequality on each face $F$, 
\begin{equation}
\label{eq:err-bd}
\left\langle \nabla (u -\Pi_K u)\cdot \bm{n}, 
Q_F v_h - \Pi_K v_h\right\rangle_{F}\leq 
\norm{\nabla (u -\Pi_K u )\cdot \bm{n}}_{0,F}
\norm{Q_F  v_h - \Pi_{K} v_h}_{0,F},
\end{equation}
we assign $h_K^{1/2}$ to the first term and $h_K^{-1/2}$ to the second term in 
\eqref{eq:err-bd}, and apply the triangle inequality as follows:
\begin{equation}
\label{eq:err-bd1}
\norm{Q_F  v_h - \Pi_{K} v_h}_{0,F}
\leq \norm{Q_F  (v_h -  \Pi_{K} v_h)}_{0,F}
+\norm{Q_F  \Pi_{K} v_h - \Pi_{K} v_h}_{0,F}.
\end{equation}
Consequently, the first term above, together with the weight $h_{K}^{-1/2}$, is now 
a part of $\tnorm{v_h}$. Applying the Poincar\'{e} inequality for the linear 
polynomial $\Pi_{K} v_h$ on face $F$ in Lemma \ref{lemma:poincare-linear-face} on 
the second term above, together with $|F|h_F\lesssim |F|l_F \leq |K|$ 
implied by the height condition \hyperref[asp:B]{\bf B}, leads to:
\begin{equation}
\label{eq:err-bdF}
h_{K}^{-1/2}\norm{Q_F  \Pi_{K} v_h - \Pi_{K} v_h}_{0,F} 
\leq h_F^{1/2}\norm{\nabla_F \Pi_{K} v_h}_{0,F} 
\lesssim \;\norm{\nabla \Pi_{K} v_h}_{0,K},
\end{equation}
which is a part of $\tnorm{v_h}$. Lastly summing up \eqref{eq:err-bd} in 
$\ell^2$-sense yields the lemma.
\end{proof}

With the a priori error estimate in Lemma \ref{lemma:err-estimate-iso}, it suffices 
to estimate the two terms from estimate \eqref{eq:err-estimate-iso}. First we 
estimate $(u - \Pi_{K}u)$ in the following lemma.

\begin{lemma}[\revision{Error estimate of $\Pi_{K}$ on an isotropic element}]
\label{lemma:err-prK}
When $K$ satisfies Assumptions \hyperref[asp:A]{\bf A--B--C}, for $u\in 
H^2\big(\operatorname{conv}(K)\big)$ it holds that:
\begin{equation}
\label{eq:err-prK}
h_{K}^{-1}\norm{u - \Pi_{K}u }_{0,K} +\norm{\nabla (u - \Pi_{K}u )}_{0,K} \lesssim 
h_{K} |u|_{2, \operatorname{conv}(K)}. 
\end{equation}
\end{lemma}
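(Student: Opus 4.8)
The plan is to estimate the gradient term $\norm{\nabla(u-\Pi_K u)}_{0,K}$ and the scaled $L^2$ term $h_K^{-1}\norm{u-\Pi_K u}_{0,K}$ separately, in both cases reducing matters to the Poincar\'e-type inequalities on the convex hull established earlier, and using the isotropy of $K$ only through the volume comparison $|K|\eqsim h_K^d$ of Lemma \ref{lemma:iso}.

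For the gradient term, the key is the identity $\nabla\Pi_K u = Q_K(\nabla u) = \overline{\nabla u}^K$ observed right after \eqref{eq:pr}: the gradient of the elliptic projection is exactly the $L^2$-best constant approximation of $\nabla u$ on $K$. Applying Lemma \ref{lemma:poincare-convex} componentwise to $v=\partial_i u$ with $\omega = K$ gives $\norm{\partial_i u-\overline{\partial_i u}^K}_{0,K}\le (h_K/\pi)\norm{\nabla\partial_i u}_{0,\operatorname{conv}(K)}$, and summing over $i$ yields $\norm{\nabla(u-\Pi_K u)}_{0,K}\lesssim h_K|u|_{2,\operatorname{conv}(K)}$. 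This disposes of the second half of the left-hand side directly.

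For the $L^2$ term I would exploit the normalization constraint \eqref{eq:pr-constraint}, which says precisely that $\overline{u-\Pi_K u}^{\partial K}=0$. Setting $w=u-\Pi_K u\in H^1(\operatorname{conv}(K))$ and invoking the zero-boundary-average Poincar\'e inequality of Lemma \ref{lemma:poincare-b} (valid under Assumptions \hyperref[asp:A]{\bf A--B}) gives $\norm{u-\Pi_K u}_{0,K}=\norm{w-\overline{w}^{\partial K}}_{0,K}\lesssim h_K\norm{\nabla w}_{0,\operatorname{conv}(K)}$. It then remains to control $\norm{\nabla(u-\Pi_K u)}_{0,\operatorname{conv}(K)}$ over the whole convex hull rather than over $K$.

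This last step is where the only genuine obstacle lies, since $\nabla\Pi_K u=\overline{\nabla u}^K$ is optimal on $K$ but not on $\operatorname{conv}(K)$. I would insert the convex-hull average $\overline{\nabla u}^{\operatorname{conv}(K)}$ by the triangle inequality. The first piece $\norm{\nabla u-\overline{\nabla u}^{\operatorname{conv}(K)}}_{0,\operatorname{conv}(K)}$ is again $\lesssim h_K|u|_{2,\operatorname{conv}(K)}$ by Lemma \ref{lemma:poincare-convex}. The second piece is a constant, and estimating $|\overline{\nabla u}^K-\overline{\nabla u}^{\operatorname{conv}(K)}|\le |K|^{-1/2}\norm{\nabla u-\overline{\nabla u}^{\operatorname{conv}(K)}}_{0,K}$ and multiplying by $|\operatorname{conv}(K)|^{1/2}$ produces the factor $(|\operatorname{conv}(K)|/|K|)^{1/2}$. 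Because $K$ is isotropic, Lemma \ref{lemma:iso} gives $|K|\eqsim h_K^d$ while $|\operatorname{conv}(K)|\le h_K^d$, so this ratio is uniformly bounded and is absorbed into the hidden constant. Combining the pieces yields $\norm{\nabla w}_{0,\operatorname{conv}(K)}\lesssim h_K|u|_{2,\operatorname{conv}(K)}$, whence $h_K^{-1}\norm{u-\Pi_K u}_{0,K}\lesssim h_K|u|_{2,\operatorname{conv}(K)}$, and adding the two contributions completes the estimate.
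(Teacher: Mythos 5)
Your proposal is correct and follows essentially the same route as the paper's proof: the gradient term via $\nabla\Pi_K u=\overline{\nabla u}^{K}$ and Lemma \ref{lemma:poincare-convex}, and the $L^2$ term via the boundary-average constraint \eqref{eq:pr-constraint}, Lemma \ref{lemma:poincare-b}, and the insertion of $\overline{\nabla u}^{\operatorname{conv}(K)}$ with the volume ratio $\bigl(|\operatorname{conv}(K)|/|K|\bigr)^{1/2}$ controlled by Lemma \ref{lemma:iso}. Your scalar-constant formulation of the mean-difference estimate is just a rewriting of the paper's Cauchy--Schwarz computation, so there is no substantive difference.
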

\begin{proof}
Since $\nabla \Pi_{K}u = \overline{\nabla u}^{K}$, the estimate in the second term 
follows from the Poincar\'e inequality in Lemma 
\ref{lemma:poincare-convex}. For the first term, by constraint 
\eqref{eq:pr-constraint}, applying 
the Poincar\'{e} inequality in Lemma \ref{lemma:poincare-b} and the triangle 
inequality lead to:
\begin{equation}
\begin{aligned}
& h_{K}^{-1}\norm{u - \Pi_{K}u }_{0,K} \lesssim \norm{\nabla(u - \Pi_{K}u) 
}_{0,\operatorname{conv}(K)} 
\\
\leq &\; 
\bigl\| \nabla u - \overline{\nabla u}^{\operatorname{conv}(K)} 
\bigr\|_{0,\operatorname{conv}(K)} 
+ \bigl\|\overline{\nabla u}^{\operatorname{conv}(K)} - \overline{\nabla 
u}^{K} \bigr\|_{0,\operatorname{conv}(K)}.
\end{aligned}
\end{equation} 
For the second term above, Cauchy-Schwarz inequality, 
$|\operatorname{conv}(K)|\lesssim h_K^d$, and $|K|\eqsim h_K^d$ in Lemma 
\ref{lemma:iso} imply that
\begin{equation}
\begin{aligned}
&\bigl\|\overline{\nabla u}^{\operatorname{conv}(K)} - \overline{\nabla 
u}^{K} \bigr\|_{0,\operatorname{conv}(K)}
= |\operatorname{conv}(K)|^{1/2}\left|\frac{1}{|K|} \int_K \Big( {\nabla u} - 
\overline{\nabla 
u}^{\operatorname{conv}(K)} \Big) \right|
\\
\leq &\; \frac{|\operatorname{conv}(K)|^{1/2}}{|K|^{1/2}}
\bigl\| \nabla u - \overline{\nabla u}^{\operatorname{conv}(K)} 
\bigr\|_{0,K} \lesssim 
\bigl\| \nabla u - \overline{\nabla u}^{\operatorname{conv}(K)} 
\bigr\|_{0,\operatorname{conv}(K)}.
\end{aligned}
\end{equation}
Consequently, the desired estimate follows from applying Lemma 
\ref{lemma:poincare-convex} on $\operatorname{conv}(K)$ and the fact that the 
diameter of $\operatorname{conv}(K)$ is $h_K$.
\end{proof}

\begin{lemma}[\revision{Error estimate of the normal derivative of $\Pi_{K}$}]
\label{lemma:err-prnd}
For $K\in \mathcal{T}_h$, provided that every $F$ 
satisfies Assumption \hyperref[asp:B]{\bf B--C}, the following 
error estimate holds on a face $F\in \mathcal{F}_h(K)$ for $u\in 
H^2\bigl(\operatorname{conv}(K)\bigr)$
\begin{equation}
h_{K}^{1/2}\norm{\nabla (u - \Pi_{K}  u )\cdot \bm{n}}_{0,F}
\lesssim h_{K} |u|_{2,\operatorname{conv}(K)}.
\end{equation}
\end{lemma}
\begin{proof}
By Assumption \hyperref[asp:B]{\bf B--C}, we apply trace 
inequality \eqref{eq:tr-hK} toward $K$
\begin{equation}
\label{eq:err-trF}
h_{K}^{1/2}\norm{\nabla (u - \Pi_{K}  u )\cdot \bm{n}}_{0,F} 
\lesssim \norm{\nabla (u - \Pi_{K}  u ) }_{0,K} 
+ h_{K} |{u}|_{2,K}.
\end{equation}
The lemma then follows from Lemma \ref{lemma:err-prK}. 
\end{proof}

\begin{lemma}[Error estimate of $\Pi_{K}$ on a face]
\label{lemma:err-pr}
For $K\in \mathcal{T}_h$, provided that $K$ 
satisfies Assumptions \hyperref[asp:B]{\bf A--B--C}, the 
following error estimate holds on a face $F\in \mathcal{F}_h(K)$ for $u\in 
H^2\bigl(\operatorname{conv}(K)\bigr)$:
\begin{equation}
\label{eq:err-pr-iso}
h_{K}^{-1/2}\norm{u - \Pi_{K}u }_{0,F} \lesssim h_{K} 
|u|_{2,\operatorname{conv}(K)}.
\end{equation}
\end{lemma}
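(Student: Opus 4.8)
The plan is to obtain this face estimate by simply converting the bulk estimate of Lemma \ref{lemma:err-prK} into a face estimate via the sharp trace inequality of Lemma \ref{lemma:trace-hK}. The key observation is that the right-hand side of \eqref{eq:tr-hK}, after rescaling, produces precisely the quantity that \eqref{eq:err-prK} already controls.

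Concretely, I would first apply the trace inequality \eqref{eq:tr-hK} to the function $v = u - \Pi_K u$ on the face $F$. This is legitimate because $K$ satisfies Assumptions \hyperref[asp:A]{\bf A--B--C}, so in particular the height and hourglass conditions \hyperref[asp:B]{\bf B} and \hyperref[asp:C]{\bf C} required by Lemma \ref{lemma:trace-hK} hold for $F$. This gives
\begin{equation*}
\norm{u - \Pi_K u}_{0,F} \lesssim h_K^{-1/2}\norm{u - \Pi_K u}_{0,K} + h_K^{1/2}\norm{\nabla(u - \Pi_K u)}_{0,K}.
\end{equation*}
Multiplying both sides by $h_K^{-1/2}$ then yields
\begin{equation*}
h_K^{-1/2}\norm{u - \Pi_K u}_{0,F} \lesssim h_K^{-1}\norm{u - \Pi_K u}_{0,K} + \norm{\nabla(u - \Pi_K u)}_{0,K}.
\end{equation*}

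The right-hand side is exactly the combination appearing on the left of \eqref{eq:err-prK}, so the second step is to invoke Lemma \ref{lemma:err-prK} directly to bound it by $h_K |u|_{2,\operatorname{conv}(K)}$, which completes the argument. I do not anticipate any genuine obstacle here: the statement is essentially a corollary combining the two previously established lemmas, and the only point requiring minor care is verifying that the hypotheses align—namely that the full set of Assumptions \hyperref[asp:A]{\bf A--B--C} assumed here supplies both the \hyperref[asp:B]{\bf B--C} needed for the trace inequality and the \hyperref[asp:A]{\bf A--B--C} needed for Lemma \ref{lemma:err-prK} (where Assumption \hyperref[asp:A]{\bf A} and Lemma \ref{lemma:iso} enter through $|K|\eqsim h_K^d$). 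Since $K$ is isotropic by hypothesis, all these prerequisites are automatically met.
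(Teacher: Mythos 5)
Your proposal is correct and is essentially identical to the paper's own proof: both apply the trace inequality of Lemma \ref{lemma:trace-hK} to $v = u - \Pi_K u$ (justified by Assumptions \hyperref[asp:B]{\bf B--C} on the faces of $K$) and then invoke Lemma \ref{lemma:err-prK} to bound the resulting volume terms by $h_K\,|u|_{2,\operatorname{conv}(K)}$. Your additional hypothesis check (that Assumption \hyperref[asp:A]{\bf A} enters through $|K|\eqsim h_K^d$ in Lemma \ref{lemma:err-prK}) is accurate but left implicit in the paper.
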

\begin{proof}
Since for every $F\in\mathcal{F}_h(K)$, $F$ satisfies Assumptions 
\hyperref[asp:B]{\bf B--C} 
with respect to $K$, by the trace inequality in Lemma \ref{lemma:trace-hK}, we have:
\begin{equation}
\label{eq:err-trFL2}
h_{K}^{-1/2}\norm{u - \Pi_{K}u }_{0,F} \lesssim 
h_{K}^{-1}\norm{u - \Pi_{K}u }_{0,K} 
+ \norm{\nabla (u - \Pi_{K}u) }_{0,K},
\end{equation}
which yields the desired estimate by Lemma \ref{lemma:err-prK}.
\end{proof}

Now the a priori convergence result for the lowest order nonconforming 
VEM on an isotropic mesh can be summarized as follows.
\begin{theorem}[\revision{Convergence on isotropic meshes}]
\label{theorem:convergence-isotropic}
Assume that the mesh $\mathcal{T}_h$ is isotropic in the sense of Assumptions 
\hyperref[asp:A]{\bf A--B--C--E}. When the 
solution $u$ to \eqref{eq:model-weak} satisfies $u\in H^2(\Omega)$, the 
following error estimate holds for the solution $u_h$ to \eqref{eq:VEM}:
\begin{equation}
\label{eq:convergence-isotropic} 
\tnorm{u-u_h} \lesssim h \| u \|_{2,\Omega}.
\end{equation}
\end{theorem}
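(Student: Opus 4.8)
The plan is to obtain \eqref{eq:convergence-isotropic} by assembling the preceding lemmas. Since the mesh is isotropic, we have $\omega_K=K$ and $h_{\omega_K}=h_K$ for every $K\in\mathcal T_h$, so every local estimate proved above (Lemmas~\ref{lemma:err-estimate-iso}, \ref{lemma:err-prnd}, \ref{lemma:err-pr}) applies verbatim. First I would split the error through the canonical interpolant $u_I$ and use the triangle inequality for $\tnorm{\cdot}$, which is a genuine norm on $V_h$ by Lemma~\ref{lemma:norm}:
\[
\tnorm{u-u_h}\le \tnorm{u-u_I}+\tnorm{u_I-u_h}.
\]
The first term vanishes since $\tnorm{u-u_I}=0$: all three projections $\Pi_K$, $\Pi_{\omega_K}$, $Q_F$ entering $\tnorm{\cdot}$ depend only on the face DoFs, and by \eqref{eq:intp} the interpolant reproduces every DoF of $u$. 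Hence it suffices to bound $\tnorm{u_h-u_I}$, for which Lemma~\ref{lemma:err-estimate-iso} is directly available.

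Next I would estimate the two right-hand-side sums in \eqref{eq:err-estimate-iso} element by element. Squaring the bound of Lemma~\ref{lemma:err-prnd} gives $h_K\norm{\nabla(u-\Pi_K u)\cdot\bm{n}}_{0,F}^2\lesssim h_K^2|u|_{2,\operatorname{conv}(K)}^2$ on each face, and squaring Lemma~\ref{lemma:err-pr} gives $h_K^{-1}\norm{u-\Pi_K u}_{0,F}^2\lesssim h_K^2|u|_{2,\operatorname{conv}(K)}^2$. Because $n_K=|\mathcal F_h(K)|$ is uniformly bounded by Assumption~\hyperref[asp:A]{\bf A}, summing over $\mathcal F_h(K)$ preserves the factor up to a constant, so that for each element
\[
\sum_{F\in\mathcal F_h(K)}\Big(h_K\norm{\nabla(u-\Pi_K u)\cdot\bm{n}}_{0,F}^2+h_K^{-1}\norm{u-\Pi_K u}_{0,F}^2\Big)\lesssim h_K^2\,|u|_{2,\operatorname{conv}(K)}^2.
\]

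Finally I would sum over all $K\in\mathcal T_h$, bound each $h_K$ by the global mesh size $h$, and convert the sum of local seminorms on convex hulls into a global one. Here Assumption~\hyperref[asp:E]{\bf E} is the crucial ingredient: with $\omega_K=K$ it asserts that the family $\{\operatorname{conv}(K)\}_{K\in\mathcal T_h}$ has uniformly bounded overlap, whence $\sum_K|u|_{2,\operatorname{conv}(K)}^2\lesssim \gamma_3\,|u|_{2,\Omega}^2\le \gamma_3\,\|u\|_{2,\Omega}^2$. This yields $\tnorm{u_h-u_I}^2\lesssim h^2\|u\|_{2,\Omega}^2$, and the theorem follows. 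I expect the only non-routine point to be this last passage: it is where Assumption~\hyperref[asp:E]{\bf E} prevents the overlapping convex hulls from accumulating a factor growing with the number of elements, and it tacitly needs the $H^2$-regularity of $u$ to hold on each $\operatorname{conv}(K)$ — obtained by a Sobolev extension for boundary elements whose convex hull leaves $\Omega$ — which should be recorded when invoking Lemmas~\ref{lemma:err-prnd} and~\ref{lemma:err-pr}.
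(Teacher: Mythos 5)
Your proposal is correct and follows essentially the same route as the paper: $\tnorm{u-u_I}=0$ by the DoF-based construction, Lemma~\ref{lemma:err-estimate-iso} combined with the squared bounds of Lemmas~\ref{lemma:err-prnd} and~\ref{lemma:err-pr}, and summation using Assumptions \hyperref[asp:A]{\bf A} and \hyperref[asp:E]{\bf E} for the bounded face count and bounded overlap of convex hulls. The extension you flag at the end is exactly what the paper does, only up front: it invokes Stein's extension theorem first to obtain $u_E\in H^2(\mathbb{R}^d)$ so that the local seminorms $|u_E|_{2,\operatorname{conv}(K)}$ are well defined even when $\operatorname{conv}(K)\not\subset\Omega$.
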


\begin{proof}
First of all, we apply the Stein's extension theorem (\cite{Stein:1970Singular} 
Theorem 6.5) to $u\in H^2(\Omega)$ to get a 
function $u_E \in H^2(\mathbb R^d), u_E |_{\Omega} = u|_{\Omega}$, and $\| u_E 
\|_{2, \mathbb R^d}\leq C(\Omega) \| u \|_{2,\Omega}$. With this extension $u_E\in 
H^2\big(\operatorname{conv}(K)\big)$ for any $K\in \mathcal T_h$. 

Secondly, the estimates from Lemma \ref{lemma:err-prnd} and \ref{lemma:err-pr} are 
plugged into Lemma \ref{lemma:err-estimate-iso}, and Assumption \hyperref[asp:A]{\bf 
A} ensures that these estimates are summed up bounded times on a fixed 
element. Meanwhile, Assumption \hyperref[asp:E]{\bf E} implies that the integral on 
the overlap $\operatorname{conv}(K)\cap \operatorname{conv}({K'})$ is repeated 
bounded times for neighboring $K, 
K'\in \mathcal{T}_h$. Therefore,
\begin{equation}
\tnorm{u_I-u_h}^2 \lesssim \sum_{K\in \mathcal T_h}h_{K}^2 
|u_E|_{2,\operatorname{conv}(K)}^2 
\lesssim h^2|u_E|_{2,\operatorname{conv}(\Omega)}^2 \lesssim h^2\|u\|_{2,\Omega}^2.
\end{equation}
As $\tnorm{u-u_I}=0$ by the construction of $u_I$ and \eqref{eq:norm-ah}, the 
theorem follows.
\end{proof}

\subsection{\revision{A priori error estimates on anisotropic elements}}
\revision{In the vanilla error equation \eqref{eq:error}, a boundary term that 
involves $\Pi_K v_h$ is present. For an anisotropic element $K$, key estimates 
including \eqref{eq:err-bdF}, \eqref{eq:err-trF}, and \eqref{eq:err-trFL2} will 
become 
problematic where the Assumptions \hyperref[asp:B]{\bf B--C} are violated.} Instead, 
the boundary term will be lifted to its 
isotropic extended element patch $\omega_K$, and thus in next lemma we aim to 
replace $\Pi_K v$ by $\Pi_{\omega_K} v$ in the error equation \eqref{eq:error} 
taking the anisotropic elements into account.

\begin{lemma}[Expanded error equation]
\label{lemma:error-exp}
Under the same setting with Lemma \ref{lemma:error-omega}, it holds that
\begin{equation}
\label{eq:error2}
\begin{aligned}
a_h(u_h-u_I,v_h)  =& \sum_{K\in \mathcal{T}_h}\sum_{F\in \mathcal{F}_h(K)} 
\left\langle \nabla (u -u_{\pi} )\cdot \bm{n}, 
Q_F v_h - \Pi_{\omega_K} v_h\right\rangle_{F}
\\
\quad & -  \sum_{K\in \mathcal{T}_h} 
S_{K}\bigl(u_I-\Pi_{\omega_K} u_I, v_h - \Pi_{\omega_K} v_h\bigr)\\
\quad &+ \sum_{K\in \mathcal{T}_h}\left(  \nabla (u -u_{\pi} ),  \nabla 
\bigl( \Pi_{\omega_K} v_h -\Pi_{K}  v_h \bigr)\right)_K \\
\quad & - \sum_{K\in \mathcal{T}_h}(f, \Pi_{\omega_K} v_h -\Pi_{K} v_h )_K.
\end{aligned}
\end{equation}
\end{lemma}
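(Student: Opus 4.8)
The plan is to start from the vanilla error equation \eqref{eq:error} in Lemma \ref{lemma:error-omega} and only rework its boundary term, introducing the patch projection $\Pi_{\omega_K} v_h$ in place of the local projection $\Pi_K v_h$. The algebraic identity driving everything is the splitting, on each face $F\in\mathcal F_h(K)$,
$$
Q_F v_h - \Pi_K v_h = \bigl(Q_F v_h - \Pi_{\omega_K} v_h\bigr) + \bigl(\Pi_{\omega_K} v_h - \Pi_K v_h\bigr),
$$
so that pairing against $\nabla(u-u_{\pi})\cdot\bm n$ reproduces the first sum of \eqref{eq:error2} plus a leftover boundary contribution $\sum_{K}\sum_{F} \langle \nabla(u-u_{\pi})\cdot\bm n,\,\Pi_{\omega_K}v_h-\Pi_K v_h\rangle_F$. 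The only thing left to check is that this leftover equals the last two sums of \eqref{eq:error2}; the stabilization sum is untouched and carries over verbatim from \eqref{eq:error}.

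Next I would collapse the face sum into a single boundary integral over $\partial K$ and integrate by parts. Writing $w:=(\Pi_{\omega_K}v_h-\Pi_K v_h)\big|_K$, which lies in $\mathbb P_1(K)$ because both projections produce linear polynomials on $K$, Green's identity gives
$$
\langle \nabla(u-u_{\pi})\cdot\bm n,\,w\rangle_{\partial K}
= \bigl(\nabla(u-u_{\pi}),\nabla w\bigr)_K + \bigl(\Delta(u-u_{\pi}),\,w\bigr)_K.
$$
Since $u_{\pi}$ is piecewise linear, $\Delta u_{\pi}=0$, and since $u$ solves $-\Delta u=f$, we have $\Delta(u-u_{\pi})=-f$ on each $K$. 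Substituting $w=\Pi_{\omega_K}v_h-\Pi_K v_h$ turns the two terms on the right into exactly $(\nabla(u-u_{\pi}),\nabla(\Pi_{\omega_K}v_h-\Pi_K v_h))_K$ and $-(f,\Pi_{\omega_K}v_h-\Pi_K v_h)_K$, which are the third and fourth sums of \eqref{eq:error2}.

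There is essentially no analytic obstacle here: the argument is a bookkeeping rearrangement of \eqref{eq:error} rather than a new estimate, and the only facts used are that $\Pi_{\omega_K}v_h$ restricts to an element of $\mathbb P_1(K)$ on $K$ (so $w$ is smooth and the integration by parts is legitimate) and the PDE identity $\Delta(u-u_{\pi})=-f$. The single point warranting care is the regularity needed to integrate by parts against the trace $\nabla(u-u_{\pi})\cdot\bm n$, which is justified by the standing assumption $u\in H^2$ together with $u_{\pi}$ being polynomial, exactly as in the derivation of \eqref{eq:error}. The purpose of the reformulation, to be exploited in the subsequent anisotropic estimates, is that the boundary pairing now involves $\Pi_{\omega_K}v_h$, whose gradient is controlled on the isotropic patch, while the newly created volume terms measure only the gap $\Pi_{\omega_K}v_h-\Pi_K v_h$.
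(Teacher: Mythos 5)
Your proposal is correct and follows essentially the same route as the paper: the paper likewise starts from \eqref{eq:error}, inserts $\Pi_{\omega_K} v_h$ into the boundary pairing, and converts the leftover term $\sum_{F\in \mathcal{F}_h(K)}\left\langle \nabla (u -u_{\pi})\cdot \bm{n},\, \Pi_{\omega_K} v_h- \Pi_K  v_h\right\rangle_{F}$ via Green's identity into $\bigl(\Delta (u - u_{\pi}),\, \Pi_{\omega_K} v_h- \Pi_K  v_h \bigr)_K+ \bigl(\nabla (u - u_{\pi}), \nabla (\Pi_{\omega_K} v_h- \Pi_K  v_h)\bigr)_K$, using $\Delta u_{\pi}=0$ and $-\Delta u = f$ exactly as you do. Your added remarks on why $w=\Pi_{\omega_K}v_h-\Pi_K v_h$ restricts to $\mathbb{P}_1(K)$ and on the trace regularity from $u\in H^2$ are consistent with, and slightly more explicit than, the paper's bookkeeping.
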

\begin{proof}
Starting with \eqref{eq:error}, we only need to expand the difference term as follows
\begin{align*}
&\sum_{F\in \mathcal{F}_h(K)} 
\left\langle \nabla (u -u_{\pi} )\cdot \bm{n}, 
\Pi_{\omega_K} v_h- \Pi_K  v_h\right\rangle_{F}\\
=& \,\bigl(\Delta (u - u_{\pi}),  \Pi_{\omega_K} v_h- \Pi_K  v_h \bigr)_K+ 
\bigl(\nabla (u - u_{\pi}), \nabla (\Pi_{\omega_K} v_h- \Pi_K  v_h)\bigr)_K\\
= & - (f, \Pi_{\omega_K} v_h -\Pi_{K} v_h )_K + \bigl(\nabla (u - u_{\pi}), 
\nabla (\Pi_{\omega_K} v_h- \Pi_K  v_h)\bigr)_K.
\end{align*}
\end{proof}

For the last term in \eqref{eq:error2} involving difference in an $L^2$-inner 
product, Poincar\'e inequalities with appropriate constraints can be applied to 
change it to the energy norm.
\begin{lemma}[Difference between projections] 
\label{lemma:err-prdiff}
If Assumption \hyperref[asp:D]{\bf D} is met for $K$, denote 
$\tnorm{v_h}_{\omega_K}^2 
:=\sum_{K\in \mathcal{T}_h(\omega_K)} \tnorm{v_h}_{K}^2$, then
\begin{equation}
\norm{\Pi_{K}  v_h - \Pi_{\omega_K} v_h}_{0,K} \lesssim h_{\omega_K} 
\tnorm{v_h}_{\omega_K}.
\end{equation}
\end{lemma}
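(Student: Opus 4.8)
The plan is to recognize that the difference $q := \Pi_{K} v_h - \Pi_{\omega_K} v_h$ is itself a linear polynomial on $\omega_K$ (the affine function $\Pi_K v_h$ extends uniquely from $K$ to all of $\omega_K$) and that it carries a zero-average constraint on $\partial\omega_K$. Indeed, the shared constraint \eqref{eq:pr-constraint-aniso} gives $\int_{\partial\omega_K}\Pi_K v_h\,\dd S = \int_{\partial\omega_K}\Pi_{\omega_K} v_h\,\dd S$, so that $\int_{\partial\omega_K} q\,\dd S = 0$. This is precisely the hypothesis of the patch Poincar\'e inequality, Lemma \ref{lemma:poincare-linear-patch}, applied to $q\in\mathbb{P}_1(\omega_K)$, which therefore yields the pivotal estimate
\begin{equation*}
\norm{\Pi_{K} v_h - \Pi_{\omega_K} v_h}_{0,K} \leq h_{\omega_K}\norm{\nabla(\Pi_{K} v_h - \Pi_{\omega_K} v_h)}_{0,K}.
\end{equation*}
It then remains only to control the gradient of $q$ on $K$ by $\tnorm{v_h}_{\omega_K}$.

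For this, I would split via the triangle inequality into $\norm{\nabla \Pi_K v_h}_{0,K}$ and $\norm{\nabla \Pi_{\omega_K} v_h}_{0,K}$. The first term is exactly the elliptic-projection component of the local norm in \eqref{eq:norm-ah}, hence $\norm{\nabla \Pi_K v_h}_{0,K}\leq\tnorm{v_h}_K\leq\tnorm{v_h}_{\omega_K}$ since $K\in\mathcal{T}_h(\omega_K)$. For the second term, the key observation is that $\nabla\Pi_{\omega_K} v_h$ is a \emph{constant} vector on all of $\omega_K$, so that $\norm{\nabla\Pi_{\omega_K} v_h}_{0,K}\leq\norm{\nabla\Pi_{\omega_K} v_h}_{0,\omega_K}$ simply because $K\subseteq\omega_K$. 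Applying Lemma \ref{lemma:pr-split} then bounds this last quantity by $\bigl(\sum_{K'\in\mathcal{T}_h(\omega_K)}\norm{\nabla\Pi_{K'} v_h}_{0,K'}^2\bigr)^{1/2}\leq\tnorm{v_h}_{\omega_K}$. Collecting the two contributions gives $\norm{\nabla q}_{0,K}\lesssim\tnorm{v_h}_{\omega_K}$, and substituting into the Poincar\'e estimate above completes the argument.

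I do not anticipate a genuine obstacle, as the lemma reduces to two earlier results once the correct constraint is identified. The one point requiring care is exactly this choice of constraint: the estimate works precisely because \eqref{eq:pr-constraint-aniso} pins \emph{both} projections to the same boundary average on $\partial\omega_K$ rather than on $\partial K$, which is what lets the patch-scale factor $h_{\omega_K}$ appear while the left-hand norm is measured only over the smaller set $K$. A secondary subtlety worth flagging is that Lemma \ref{lemma:pr-split} was stated for $H^1$ functions under definition \eqref{eq:pr}; one should confirm it applies to the discrete patch projection $\Pi_{\omega_K}$ of \eqref{eq:pr-pw}, which it does, since the only property its proof uses is the Galerkin-type orthogonality $\sum_{K'}(\nabla v_h,\nabla q)_{K'} = \sum_{K'}(\nabla\Pi_{K'} v_h,\nabla q)_{K'}$ against $q\in\mathbb{P}_1(\omega_K)$.
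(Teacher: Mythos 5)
Your proof is correct and takes essentially the same route as the paper: the shared constraint \eqref{eq:pr-constraint-aniso} makes $\Pi_K v_h - \Pi_{\omega_K} v_h$ a linear polynomial with zero boundary average on $\partial\omega_K$, so Lemma \ref{lemma:poincare-linear-patch} applies, and Lemma \ref{lemma:pr-split} together with the definition \eqref{eq:norm-ah} of the local norm controls the gradient. Your extra details --- the triangle-inequality split of $\norm{\nabla(\Pi_K v_h - \Pi_{\omega_K} v_h)}_{0,K}$ and the remark that Lemma \ref{lemma:pr-split} extends to the nonconforming $v_h$ via the definition \eqref{eq:pr-pw} --- simply make explicit what the paper's one-line proof leaves implicit.
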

\begin{proof}
As we choose the constraint $\int_{\partial \omega_K} \Pi_{K}  v_h = 
\int_{\partial  \omega_K} \Pi_{\omega_K}  v_h = \int_{\partial \omega_K}v_h$, and 
$\Pi_{K}  v_h - \Pi_{\omega_K} v_h$ is a linear polynomial on $K$, the 
estimate is a direct consequence of Lemma \ref{lemma:poincare-linear-patch} and 
 \ref{lemma:pr-split}.
\end{proof}

\begin{lemma}[A priori error estimate using the expanded error equation] 
\label{lemma:err-estimate}
Under the same setting with Lemma \ref{lemma:error-omega}, when $\mathcal{T}_h$ 
satisfies Assumptions \hyperref[asp:A]{\bf A--D}, it holds that
\begin{equation}
\label{eq:err-estimate}
\begin{aligned}
\tnorm{u_h-u_I}^2 \lesssim  & \;\sum_{K\in \mathcal T_h}\sum_{F\in \mathcal{F}_h(K)}
h_{\omega_K}\norm{\nabla\big(u - \Pi_{\omega_K} u \big)\cdot \bm{n}}_{0,F}^2\\
 \;+ &   
\sum_{K\in \mathcal T_h}\sum_{F\in \mathcal{F}_h(K)}
h_{\omega_K}^{-1} \norm{u-\Pi_{\omega_K} u}_{0,F}^2
\\
\; + & \sum_{K\in \mathcal T_h } 
\norm{\nabla(u-\Pi_{\omega_K} u)}_{0,K}^2+ \sum_{K\in \mathcal T_h} 
h_{\omega_K}^2\norm{f}_{0,K}^2 
\end{aligned}
\end{equation} 
\end{lemma}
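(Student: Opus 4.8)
The plan is to test the expanded error equation \eqref{eq:error2} against $v_h = u_h - u_I$ and to fix the piecewise linear polynomial $u_\pi$ elementwise by $u_\pi|_K = \Pi_{\omega_K} u$. With this choice $\tnorm{u_h - u_I}^2 = a_h(u_h - u_I, u_h - u_I)$ equals the four terms on the right-hand side of \eqref{eq:error2}. I will bound each of them by the square root of a corresponding term of \eqref{eq:err-estimate} multiplied by $\tnorm{v_h}$; dividing through by $\tnorm{u_h - u_I}$ and squaring then gives the claim. Throughout, the local patch energies are reassembled into the global one via the finite-overlap Assumption \hyperref[asp:E]{\bf E}: since $K' \subset \omega_K$ forces $\operatorname{conv}(\omega_{K'}) \cap \operatorname{conv}(\omega_K) \neq \varnothing$, each $\tnorm{v_h}_{K'}^2$ is repeated at most $\gamma_3$ times, so $\sum_{K} \tnorm{v_h}_{\omega_K}^2 \lesssim \tnorm{v_h}^2$.

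For the stabilization term and the boundary term the first ingredient is the identity $\Pi_{\omega_K} u_I = \Pi_{\omega_K} u$. This is the patch analogue of Lemma \ref{lemma:pr-id}: both the right-hand side of \eqref{eq:pr-pw} and the constraint \eqref{eq:pr-constraint-aniso} depend only on the face DoFs, which $u$ and $u_I$ share by \eqref{eq:intp}. Consequently $Q_F(u_I - \Pi_{\omega_K} u_I) = Q_F(u - \Pi_{\omega_K} u)$, and the Cauchy--Schwarz inequality applied to $S_K(\cdot,\cdot)$ factors the stabilization term into the $\tnorm{v_h}$-piece $\sum_F h_{\omega_K}^{-1}\norm{Q_F(v_h - \Pi_{\omega_K} v_h)}_{0,F}^2$ and the second term of \eqref{eq:err-estimate} (using $\norm{Q_F \cdot}_{0,F} \leq \norm{\cdot}_{0,F}$). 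For the boundary term I split, on each face, $Q_F v_h - \Pi_{\omega_K} v_h = Q_F(v_h - \Pi_{\omega_K} v_h) + (Q_F \Pi_{\omega_K} v_h - \Pi_{\omega_K} v_h)$ and assign the weights $h_{\omega_K}^{1/2}$ and $h_{\omega_K}^{-1/2}$ to the two factors of each pairing. The first summand, carrying $h_{\omega_K}^{-1/2}$, is again part of $\tnorm{v_h}$, while the second is controlled as in \eqref{eq:err-bdF}: the face Poincar\'e inequality of Lemma \ref{lemma:poincare-linear-face} bounds it by $h_F \norm{\nabla_F \Pi_{\omega_K} v_h}_{0,F}$, and since $\Pi_{\omega_K} v_h$ is linear, the isotropy scaling $|\omega_K| \eqsim h_{\omega_K}^d$ (Lemma \ref{lemma:iso}) together with $h_F|F|^{1/2} \lesssim h_{\omega_K}^{(d+1)/2}$ yields $h_{\omega_K}^{-1/2} h_F \norm{\nabla_F \Pi_{\omega_K} v_h}_{0,F} \lesssim \norm{\nabla \Pi_{\omega_K} v_h}_{0,\omega_K} \lesssim \tnorm{v_h}_{\omega_K}$ by Lemma \ref{lemma:pr-split}. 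The first term of \eqref{eq:err-estimate} emerges from the factor carrying $h_{\omega_K}^{1/2}$.

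The two remaining terms both feature the linear difference $\Pi_{\omega_K} v_h - \Pi_K v_h$ on $K$. In the $H^1$ term, Cauchy--Schwarz produces $\norm{\nabla(u - \Pi_{\omega_K} u)}_{0,K}$ --- the third term of \eqref{eq:err-estimate} --- against $\norm{\nabla(\Pi_{\omega_K} v_h - \Pi_K v_h)}_{0,K}$, which by the triangle inequality and Lemma \ref{lemma:pr-split} is $\lesssim \tnorm{v_h}_{\omega_K}$. In the source term, Cauchy--Schwarz produces $\norm{f}_{0,K}$ against $\norm{\Pi_{\omega_K} v_h - \Pi_K v_h}_{0,K}$, and Lemma \ref{lemma:err-prdiff} bounds the latter by $h_{\omega_K}\tnorm{v_h}_{\omega_K}$, yielding the fourth term of \eqref{eq:err-estimate}. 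Summing the four estimates and reassembling via Assumption \hyperref[asp:E]{\bf E} completes the argument. The main obstacle is the boundary term: it is the anisotropic counterpart of the isotropic estimate \eqref{eq:err-bdF}, and the delicate point is that the face-average correction $Q_F \Pi_{\omega_K} v_h - \Pi_{\omega_K} v_h$ must be absorbed into the patch energy $\tnorm{v_h}_{\omega_K}$ rather than the unavailable local energy on the anisotropic $K$, which is exactly where the isotropy of $\omega_K$ and Lemma \ref{lemma:pr-split} become indispensable.
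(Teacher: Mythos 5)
Your proposal is correct and takes essentially the same route as the paper's proof: the same choices $v_h = u_h - u_I$ and $u_\pi = \Pi_{\omega_K} u$ in \eqref{eq:error2}, the same backward four-term estimation using the DoF-identity $\Pi_{\omega_K} u_I = \Pi_{\omega_K} u$, Lemma \ref{lemma:poincare-linear-face} for the face correction, Lemma \ref{lemma:err-prdiff} for the source term, and Lemma \ref{lemma:pr-split} to absorb everything into $\tnorm{v_h}_{\omega_K}$. The only cosmetic deviations are that you bound the face-Poincar\'e correction via $|F| \lesssim h_F^{d-1}$ and $|\omega_K| \eqsim h_{\omega_K}^d$ (Lemma \ref{lemma:iso}) where the paper uses the pyramid-volume bound $h_F|F| \lesssim l_F |F| \leq |\omega_K|$ from the height condition toward $\omega_K$ --- both valid consequences of the isotropy of $\omega_K$ --- and that you make explicit, via Assumption \hyperref[asp:E]{\bf E}, the bounded-overlap argument for $\sum_K \tnorm{v_h}_{\omega_K}^2 \lesssim \tnorm{v_h}^2$ that the paper leaves implicit in its closing step.
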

\begin{proof}
We proceed similarly with the proof of Lemma \ref{lemma:err-estimate-iso} by 
choosing $v_h = u_h - u_I$, yet letting $u_{\pi} = \Pi_{\omega_K}u$ instead in 
\eqref{eq:error2}. The four terms in \eqref{eq:error2} shall be estimated in a 
backward order. For the fourth term, by the Cauchy-Schwarz inequality and applying 
Lemma \ref{lemma:err-prdiff}, we have
\begin{equation}
\big(f, (\Pi_{K} - \Pi_{\omega_K} )v_h \big)_K \leq \norm{f}_{0,K}\| 
(\Pi_{K} - \Pi_{\omega_K} )v_h\|_{K}\lesssim  h_{\omega_K} \norm{f}_{0,K} 
\tnorm{v_h}_{\omega_K}.
\end{equation}
The third term can be estimated in a similar fashion by applying the 
Cauchy-Schwarz inequality, and applying Lemma \ref{lemma:pr-split} 
to get  
\begin{equation}
\norm{\nabla (\Pi_{\omega_K} v_h -\Pi_{K}  v_h)}_{0,K} \leq 
\norm{\nabla \Pi_{\omega_K} v_h }_{0,\omega_K} + \norm{\nabla \Pi_{K} v_h }_{0,K}
\lesssim \tnorm{v_h}_{\omega_K}.
\end{equation} 
For the second term which is the stabilization, a similar argument with 
\eqref{eq:err-sta} in the proof of Lemma \ref{lemma:err-estimate-iso} can be used. 
By $\mathcal{F}_h(\omega_K)\subset \mathcal{F}_h$, Lemma 
\ref{lemma:pr-id} implies that $\Pi_{\omega_K} u_I = \Pi_{\omega_K} u$, which leads 
a similar estimate as the second term in \eqref{eq:err-estimate-iso}, and 
the difference is that $\Pi_K$ and $h_K$ are replaced in \eqref{eq:err-sta} by 
$\Pi_{\omega_K}$ and $h_{\omega_K}$, respectively.

The first term of \eqref{eq:error2} is treated similarly with 
\eqref{eq:err-bd} and \eqref{eq:err-bd1}, then since $\omega_K$ is isotropic, the 
rest of the proof, in which  $\Pi_K$ and $h_K$ are replaced by $\Pi_{\omega_K}$ and 
$h_{\omega_K}$, proceeds exactly the same with \eqref{eq:err-bdF}:
\begin{equation}
\label{eq:error-3}
h_{\omega_K}^{-1/2}\norm{Q_F  \Pi_{\omega_K} v_h - \Pi_{\omega_K} v_h}_{0,F} 
\lesssim h_F^{1/2}\norm{\nabla_F \Pi_{\omega_K} v_h}_{0,F} 
\lesssim \;\norm{\nabla \Pi_{\omega_K} v_h}_{0,\omega_K},
\end{equation}
and finally the lemma follows from Lemma \ref{lemma:pr-split}.
\end{proof}

With the a priori error estimate in Lemma \ref{lemma:err-estimate}, it suffices to 
estimate term by term in \eqref{eq:err-estimate}. Since now it involves  
only the error of the projection on the isotropic extended patch $\omega_K$, the 
estimates in Lemma \ref{lemma:err-prK}, \ref{lemma:err-prnd}, and \ref{lemma:err-pr} 
can be reused by replacing the $K$ with $\omega_K$, both of which are isotropic.

The next theorem summarizes an a priori convergence result that incorporates 
possible anisotropic elements (cf. Theore \ref{theorem:convergence-isotropic}), 
\revision{and we remark that 
Assumption \hyperref[asp:D]{\bf D} includes the scenarios when Assumptions 
\hyperref[asp:B]{\bf B--C} are met as $\omega_K = K$}. 
\begin{theorem}[Convergence on possible anisotropic meshes]
\label{theorem:convergence}
Assume that the mesh $\mathcal{T}_h$ satisfies Assumptions \hyperref[asp:D]{\bf 
A--D--E}. When the 
solution $u$ to problem \eqref{eq:model-weak} satisfies $u\in H^2(\Omega)$, the 
following error estimate holds for the solution $u_h$ to problem \eqref{eq:VEM}:
\begin{equation}
\label{eq:convergence} 
\tnorm{u-u_h} \lesssim h \| u \|_{2,\Omega}.
\end{equation}
\end{theorem}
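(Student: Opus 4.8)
The plan is to mirror the proof of Theorem \ref{theorem:convergence-isotropic}, but to start from the expanded a priori estimate of Lemma \ref{lemma:err-estimate} rather than Lemma \ref{lemma:err-estimate-iso}, so that the projection error is measured on the isotropic patch $\omega_K$ instead of on the possibly anisotropic element $K$. First I would invoke Stein's extension theorem to obtain $u_E\in H^2(\mathbb{R}^d)$ with $u_E|_\Omega = u$ and $\|u_E\|_{2,\mathbb{R}^d}\lesssim \|u\|_{2,\Omega}$; this guarantees $u_E\in H^2\bigl(\operatorname{conv}(\omega_K)\bigr)$ for every $K$, which is exactly the regularity required to apply the isotropic projection-error lemmas on each patch.

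Next I would bound the four terms of \eqref{eq:err-estimate} one by one. Because $\omega_K$ is isotropic by Assumption \hyperref[asp:D]{\bf D} and, by part 3 of that assumption, every face $F\in\mathcal{F}_h(K)$ satisfies Assumptions \hyperref[asp:B]{\bf B--C} toward $\omega_K$, the hypotheses of Lemmas \ref{lemma:err-prK}, \ref{lemma:err-prnd}, and \ref{lemma:err-pr} are all met with $K$ replaced by $\omega_K$. Each of the first three terms is then controlled by $h_{\omega_K}^2\,|u_E|_{2,\operatorname{conv}(\omega_K)}^2$: the first via the normal-derivative estimate (Lemma \ref{lemma:err-prnd}), the second via the $L^2$ face estimate (Lemma \ref{lemma:err-pr}), and the third via the interior gradient estimate (Lemma \ref{lemma:err-prK}) together with $K\subseteq\omega_K$.

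The fourth term, $h_{\omega_K}^2\|f\|_{0,K}^2$, has no counterpart in the isotropic argument and is handled using the PDE itself: since $-\Delta u=f$ and $\Delta u$ is a sum of second-order partial derivatives of $u$, one has $\|f\|_{0,K}=\|\Delta u\|_{0,K}\lesssim |u|_{2,K}\le |u_E|_{2,\operatorname{conv}(\omega_K)}$, so this term is again dominated by $h_{\omega_K}^2\,|u_E|_{2,\operatorname{conv}(\omega_K)}^2$. At this point all four contributions share the common bound $h_{\omega_K}^2\,|u_E|_{2,\operatorname{conv}(\omega_K)}^2$, the face-sums in the first two terms being absorbed into a fixed constant by Assumption \hyperref[asp:A]{\bf A}.

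Finally I would assemble the estimate. Assumption \hyperref[asp:D]{\bf D} gives $h_{\omega_K}\le\gamma_2 h$, and Assumption \hyperref[asp:E]{\bf E}---stated precisely in terms of the patch convex hulls $\operatorname{conv}(\omega_{K'})\cap\operatorname{conv}(\omega_K)$---guarantees that any point lies in only a bounded number of the sets $\operatorname{conv}(\omega_K)$, so the seminorms $|u_E|_{2,\operatorname{conv}(\omega_K)}^2$ overlap only $\gamma_3$ times. Summing thus yields $\tnorm{u_h-u_I}^2\lesssim h^2\sum_K |u_E|_{2,\operatorname{conv}(\omega_K)}^2\lesssim h^2\|u_E\|_{2,\mathbb{R}^d}^2\lesssim h^2\|u\|_{2,\Omega}^2$. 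Because $\tnorm{u-u_I}=0$ by the DoF-based construction of $u_I$ and the definition \eqref{eq:norm-ah}, the triangle inequality closes the argument. I expect the finite-overlap bookkeeping to be the main subtlety: the decisive point is that Assumption \hyperref[asp:E]{\bf E} is formulated for the patch convex hulls rather than for the elements themselves, which is exactly what permits the patchwise seminorms to be summed without accumulating an $h$-dependent constant.
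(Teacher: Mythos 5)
Your proposal is correct and follows essentially the same route as the paper's own proof: Stein extension, then the three projection-error lemmas (Lemmas \ref{lemma:err-prK}, \ref{lemma:err-prnd}, \ref{lemma:err-pr}) transplanted to the isotropic patch $\omega_K$ via Assumption \hyperref[asp:D]{\bf D}, the identity $\|f\|_{0,K}=\|\Delta u\|_{0,K}\le |u|_{2,K}$ for the data term, and summation using $h_{\omega_K}\le\gamma_2 h$ together with the finite overlap of $\operatorname{conv}(\omega_K)$ from Assumption \hyperref[asp:E]{\bf E}, closing with $\tnorm{u-u_I}=0$. Your explicit remark that Assumption \hyperref[asp:E]{\bf E} is deliberately phrased for patch convex hulls is exactly the bookkeeping point the paper relies on.
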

\begin{proof} We proceed exactly like Theorem 
\ref{theorem:convergence-isotropic} by extending $u$ to $H^2(\mathbb{R}^d)$ first. 
The estimate in Lemma \ref{lemma:err-prK} can be changed straightforwardly on 
$\omega_K$:
\begin{equation}
\norm{\nabla (u - \Pi_{\omega_K}u )}_{0,\omega_K} \lesssim h_{\omega_K} |u|_{2, 
\operatorname{conv}(\omega_K)}.
\end{equation}
Since $\omega_K$ satisfies Assumptions 
\hyperref[asp:B]{\bf B--C} by Assumption \hyperref[asp:D]{\bf D}, the estimates in 
Lemma \ref{lemma:err-pr} and \ref{lemma:err-prnd} are changed accordingly on 
$\omega_K$ as well:
\begin{align}
& h_{\omega_K}^{-1/2}\norm{u - \Pi_{\omega_K}u }_{0,F} \lesssim h_{\omega_K} 
|u|_{2,\operatorname{conv}(\omega_K)},
\\
\text{ and } & h_{\omega_K}^{1/2}\norm{\nabla \bigl(u - \Pi_{\omega_K}  u 
\bigr)\cdot 
\bm{n}}_{0,F}
\lesssim h_{\omega_K} |u|_{2,\operatorname{conv}(\omega_K)}.
\end{align}
After these estimates are plugged 
into Lemma \ref{lemma:err-estimate}, Assumptions \hyperref[asp:E]{\bf A--E} are 
applied in the same way with Theorem \ref{theorem:convergence-isotropic}, except now 
we consider the integral overlap on patches 
$\operatorname{conv}(\omega_K)\cap \operatorname{conv}(\omega_{K'})$ for neighboring 
elements. Upon using the 
fact that $\norm{f}_{0,K} = \norm{\Delta u}_{0,K} \leq |u|_{2,K}$, we obtain 
\begin{equation}
\tnorm{u_I-u_h}^2 \lesssim \sum_{K\in \mathcal T_h}h_{\omega_K}^2 
|u_E|_{2,\operatorname{conv}(\omega_K)}^2 
\lesssim h^2|u_E|_{2,\operatorname{conv}(\Omega)}^2 \lesssim h^2\|u\|_{2,\Omega}^2,
\end{equation}
and the rest of the proof is the same with the one in Theorem 
\ref{theorem:convergence-isotropic}.
\end{proof}

\section{Concluding remarks and future study}
\label{sec:remark}
The error analysis in this paper further relaxes and extends to 3-D of the geometry 
constraints for the linear VEM's conforming counterpart 
in~\cite{Cao;Chen:2018Anisotropic}, and Assumptions 
\hyperref[asp:B]{\bf B--C} can generalized for arbitrary dimension. Since the 
stabilization is of a weighted $L^2$-type, unlike the analysis in 
\cite{Cao;Chen:2018Anisotropic} bridging the stabilization with a discrete 
$H^{1/2}$-norm on boundary, the versatility of the VEM framework allows the 
stabilization in the nonconforming VEM to be more flexible and localizable. 

As a result, even for the isotropic case in 3D, the current analysis allows a tiny 
face and anisotropic face provided that the element is isotropic in the sense of 
Assumptions \hyperref[asp:B]{\bf B--C}, in addition to two alternative shape 
regularity conditions in Assumptions \hyperref[asp:A]{\bf A--E}. \revision{In our 
view, being ``isotropic'' for an element is a localized property near a face, in 
that the tangential direction and the normal direction of this face are comparable 
by the height condition \hyperref[asp:B]{\bf B}. Furthermore, the hourglass 
condition \hyperref[asp:C]{{\bf C}} can be viewed as a localized star-shaped 
condition for face $F$, where $K_F$ can be different for different $F$. The 
convexity of $K_F$ allows that any line connecting a point in $K_F$ to a face of 
$P_F$ is entirely in $K_F$ (cf. Remark \ref{remark:trace}), which makes $K_F$'s 
role similar to the inscribed ball to which $K$ is uniformly star-shaped in the 
traditional VEM analysis}. Meanwhile 
the existence of concave faces are allowed in the decomposition sense. 

One of the major factors facilitating the new analysis is the introduction of the 
stabilization on an extended element patch in the discretization 
\eqref{eq:bilinear-aniso}. We remark some of the concerns regarding 
the implementation using a 2-D example in the following subsection.
\subsection{Implementation remarks on the extended patch}
\label{sec:implementation}
When $\mathcal{T}_h$ is a body-fitted mesh generated by 
cutting a shape-regular background grid, $\omega_K$, which is only needed for 
certain anisotropic cut elements $\{K\}$, can be naturally chosen as the patch 
joining $K$ with one of $K$'s nearest neighbors in the background mesh. Here we 
shall illustrate using the elements $K_1,K_2\in \mathcal{T}_h$ in Figure 
\ref{fig:cutelem}, which are cut from a Cartesian 
mesh in 2-D.

When $\mathcal{T}_h$ is not generated from cutting a background shape-regular mesh, 
the situation is much more complicated, as the search for a possible extended 
element patch may produce more overhead. To pin down $\omega_K$ for an anisotropic 
$K$, one possible procedure is to 
estimate the chunkiness parameter of $K$ first: computing $|K|$ and diameter $h_F$ 
of an edge or a face $F\subset \partial K$, if the ratio $h_F/|K|^{1/d}$ is bigger 
than a threshold, then $K$ 
shall be treated as anisotropic. Starting from an anisotropic element, we can 
join its immediate neighbor sharing an edge or a face with $K$ to form $\omega_K$ 
and having the minimum chunkiness parameter among all neighbors. 
Lastly this test is repeated when necessary until $\omega_K$ passes the test. 

In the implementation, using the data structure for polyhedral 
elements~\cite{Chen;Wei;Wen:2017interface-fitted}, we can use one array to store all 
faces and another to story the indices of the polyhedra to which 
the every face belongs. In this regard, the elements are represented 
by these two arrays, and the merging of neighboring elements is very efficient; we 
refer the reader to~\cite[Section 3.2]{Chen;Wei;Wen:2017interface-fitted} for 
technical details.

\subsection{\revision{Implementation of the new stabilization}}
\label{sec:implementstablization}
In the element-wise assembling of the matrix corresponding to the bilinear form 
\eqref{eq:bilinear-aniso}, we shall separate the terms 
of the projected gradient part $\bigl(\nabla \Pi_K u_h, \nabla \Pi_K v_h\bigr)_K$ 
and the stabilization part $S_{K}\bigl(u_h-\Pi_{\omega_K} u_h,v_h- \Pi_{\omega_K} 
v_h\bigr)$. The former remains unchanged from the unmodified formulation. We focus 
on the implementation of the stabilization term.

For each anisotropic element $K$, assume that we have found an extended patch 
$\omega_K$ which itself is also represented as a polytopal element. Then 
${\Pi}_{\omega_K}$ 
can be realized by a matrix $\bm{\Pi}_{\omega_K}$ of size $(d+1)\times 
n_{\omega_K}$. The $L^2$-projection $Q_F$ on $F\in \mathcal{F}_h(\partial K)$ 
applied to a linear 
polynomial is realized by the DoF matrix $\mathbf{D}$ of size $n_K \times (d+1)$. 
See 
\cite{Beirao-da-Veiga;Brezzi;Marini;Russo:2014hitchhikers} for detailed formulations 
of matrices $\bs \Pi_{\omega_K}$ and $\mathbf{D}$. Denote by $\bar{\mathbf{I}} = 
(\mathbf{I} \quad \bs 0)_{n_K \times n_{\omega_K}}$ the extended identity matrix. 
The stabilization on 
$K$ can be realized by an $n_{\omega_K}\times n_{\omega_K}$ local matrix
\begin{equation}\label{eq:matrixSK}
h_{\omega_K}^{-1} (\bar{\mathbf{I}} - \mathbf{D} \bs \Pi_{\omega_K})^{\intercal}
{\rm diag}(|F_1|, \ldots, |F_{n_K}|)(\bar{\mathbf{I}} - \mathbf{D}\bs 
\Pi_{\omega_K}).
\end{equation}
Thus the standard assembling procedure looping over all elements can be applied to 
assemble a global one. 

As a comparison, the original stabilization using $\Pi_K$ is a matrix of size 
$n_{K}\times n_{K}$ and in the form $h_{K}^{-1} (\mathbf{I} - \mathbf{D}\bs 
\Pi_{K})^{\intercal}{\rm diag}(|F_1|, \ldots, |F_{n_K}|)(\mathbf{I} - \mathbf{D}\bs 
\Pi_{K})$. We note that one effect 
of enlarging the element is that the stabilization matrix \eqref{eq:matrixSK} is 
denser or equivalently the stencil is larger.

\section*{Acknowledgments}
We appreciate an anonymous reviewer for bringing up several insightful 
questions which improved an early version of the paper.

\bibliographystyle{siam}

\begin{thebibliography}{10}

\bibitem{Ahmad;Alsaedi;Brezzi;Marini:2013Equivalent}
{\sc B.~Ahmad, A.~Alsaedi, F.~Brezzi, L.~D. Marini, and A.~Russo}, {\em
  Equivalent projectors for virtual element methods}, Computers \& Mathematics
  with Applications, 66 (2013), pp.~376--391.

\bibitem{Antonietti;Cangiani;Collis;Dong:2016Review}
{\sc P.~F. Antonietti, A.~Cangiani, J.~Collis, Z.~Dong, E.~H. Georgoulis,
  S.~Giani, and P.~Houston}, {\em Review of discontinuous galerkin finite
  element methods for partial differential equations on complicated domains},
  in Building Bridges: Connections and Challenges in Modern Approaches to
  Numerical Partial Differential Equations, Springer, 2016, pp.~281--310.

\bibitem{Astaneh;Fuentes;Mora;Demkowicz:2018High-order}
{\sc A.~V. Astaneh, F.~Fuentes, J.~Mora, and L.~Demkowicz}, {\em High-order
  polygonal discontinuous petrov--galerkin (polydpg) methods using ultraweak
  formulations}, Computer Methods in Applied Mechanics and Engineering, 332
  (2018), pp.~686--711.

\bibitem{Ayuso-de-Dios;Lipnikov;Manzini:2016nonconforming}
{\sc B.~Ayuso~de Dios, K.~Lipnikov, and G.~Manzini}, {\em {The nonconforming
  virtual element method}}, ESAIM: Mathematical Modelling and Numerical
  Analysis (M2AN), 50 (2016), pp.~879--904.

\bibitem{Beirao-da-Veiga;Brezzi;Cangiani;Manzini:2012Principles}
{\sc L.~{Beir{\~{a}}o da Veiga}, F.~Brezzi, A.~Cangiani, G.~Manzini, L.~D.
  Marini, and A.~Russo}, {\em {Basic Principles of Virtual Element Methods}},
  Mathematical Models and Methods in Applied Sciences, 23 (2012), pp.~1--16.

\bibitem{Beirao-da-Veiga;Brezzi;Marini;Russo:2016Virtual}
{\sc L.~Beir{\~a}o~da Veiga, F.~Brezzi, L.~Marini, and A.~Russo}, {\em Virtual
  element method for general second-order elliptic problems on polygonal
  meshes}, Mathematical Models and Methods in Applied Sciences, 26 (2016),
  pp.~729--750.

\bibitem{Beirao-da-Veiga;Brezzi;Marini;Russo:2014hitchhikers}
{\sc L.~Beir{\~a}o~da Veiga, F.~Brezzi, L.~D. Marini, and A.~Russo}, {\em The
  hitchhiker's guide to the virtual element method}, Mathematical models and
  methods in applied sciences, 24 (2014), pp.~1541--1573.

\bibitem{Beirao-da-Veiga;Dassi;Russo:2017High-order}
{\sc L.~Beir{\~a}o~da Veiga, F.~Dassi, and A.~Russo}, {\em High-order virtual
  element method on polyhedral meshes}, Computers \& Mathematics with
  Applications,  (2017).

\bibitem{Beirao-da-Veiga;Lipnikov;Manzini:2014mimetic}
{\sc L.~Beirao~da Veiga, K.~Lipnikov, and G.~Manzini}, {\em The mimetic finite
  difference method for elliptic problems}, vol.~11, Springer, 2014.

\bibitem{Beirao-da-Veiga;Lovadina;Russo:2017Stability}
{\sc L.~Beir{\~{a}}o~da Veiga, C.~Lovadina, and A.~Russo}, {\em Stability
  analysis for the virtual element method}, Mathematical Models and Methods in
  Applied Sciences, 27 (2017), pp.~2557--2594.

\bibitem{Benedetto;Berrone;Pieraccini;Scialo:2014virtual}
{\sc M.~F. Benedetto, S.~Berrone, S.~Pieraccini, and S.~Scial{\`o}}, {\em The
  virtual element method for discrete fracture network simulations}, Computer
  Methods in Applied Mechanics and Engineering, 280 (2014), pp.~135--156.

\bibitem{Berrone;Borio:2017Orthogonal}
{\sc S.~Berrone and A.~Borio}, {\em Orthogonal polynomials in badly shaped
  polygonal elements for the virtual element method}, Finite Elements in
  Analysis and Design, 129 (2017), pp.~14--31.

\bibitem{Bonelle;Ern:2014Analysis}
{\sc J.~Bonelle and A.~Ern}, {\em Analysis of compatible discrete operator
  schemes for elliptic problems on polyhedral meshes}, ESAIM: Mathematical
  Modelling and Numerical Analysis, 48 (2014), pp.~553--581.

\bibitem{Brenner;Sung:2018Virtual}
{\sc S.~C. Brenner and L.-Y. Sung}, {\em Virtual element methods on meshes with
  small edges or faces}, Mathematical Models and Methods in Applied Sciences,
  28 (2018), pp.~1291--1336.

\bibitem{Brezzi;Buffa;Lipnikov:2009Mimetic}
{\sc F.~Brezzi, A.~Buffa, and K.~Lipnikov}, {\em Mimetic finite differences for
  elliptic problems}, ESAIM: Mathematical Modelling and Numerical Analysis, 43
  (2009), pp.~277--295.

\bibitem{Brezzi;Falk;Marini:2014principles}
{\sc F.~Brezzi, R.~S. Falk, and L.~D. Marini}, {\em Basic principles of mixed
  virtual element methods}, ESAIM: Mathematical Modelling and Numerical
  Analysis, 48 (2014), pp.~1227--1240.

\bibitem{Burman;Claus;Hansbo;Larson:2015CutFEM:}
{\sc E.~Burman, S.~Claus, P.~Hansbo, M.~G. Larson, and A.~Massing}, {\em
  Cutfem: Discretizing geometry and partial differential equations},
  International Journal for Numerical Methods in Engineering, 104 (2015),
  pp.~472--501.

\bibitem{Cangiani;Manzini;Sutton:2016Conforming}
{\sc A.~Cangiani, G.~Manzini, and O.~J. Sutton}, {\em Conforming and
  nonconforming virtual element methods for elliptic problems}, IMA Journal of
  Numerical Analysis,  (2016), p.~drw036.

\bibitem{Cao;Chen:2018Anisotropic}
{\sc S.~Cao and L.~Chen}, {\em Anisotropic error estimates of the linear
  virtual element method on polygonal meshes}, SIAM Journal on Numerical
  Analysis, 56 (2018), pp.~2913--2939.

\bibitem{Castillo;Cockburn;Perugia;Schotzau:2000priori}
{\sc P.~Castillo, B.~Cockburn, I.~Perugia, and D.~Sch{\"o}tzau}, {\em An a
  priori error analysis of the local discontinuous galerkin method for elliptic
  problems}, SIAM Journal on Numerical Analysis, 38 (2000), pp.~1676--1706.

\bibitem{Chen;Wei;Wen:2017interface-fitted}
{\sc L.~Chen, H.~Wei, and M.~Wen}, {\em An interface-fitted mesh generator and
  virtual element methods for elliptic interface problems}, Journal of
  Computational Physics, 334 (2017), pp.~327--348.

\bibitem{Ciarlet:1978Finite}
{\sc P.~G. Ciarlet}, {\em The Finite Element Method for Elliptic Problems},
  vol.~4 of Studies in Mathematics and its Applications, North-Holland
  Publishing Co., Amsterdam-New York-Oxford, 1978.

\bibitem{Cockburn;Di-Pietro;Ern:2016Bridging}
{\sc B.~Cockburn, D.~A. Di~Pietro, and A.~Ern}, {\em Bridging the hybrid
  high-order and hybridizable discontinuous galerkin methods}, ESAIM:
  Mathematical Modelling and Numerical Analysis, 50 (2016), pp.~635--650.

\bibitem{Crouzeix;Raviart:1973Conforming}
{\sc M.~Crouzeix and P.-A. Raviart}, {\em Conforming and nonconforming finite
  element methods for solving the stationary stokes equations i}, Revue
  fran{\c{c}}aise d'automatique informatique recherche op{\'e}rationnelle.
  Math{\'e}matique, 7 (1973), pp.~33--75.

\bibitem{Dassi;Mascotto:2018Exploring}
{\sc F.~Dassi and L.~Mascotto}, {\em Exploring high-order three dimensional
  virtual elements: Bases and stabilizations}, Computers \& Mathematics with
  Applications, 75 (2018), pp.~3379 -- 3401.

\bibitem{Di-Pietro;Ern:2015hybrid}
{\sc D.~A. Di~Pietro and A.~Ern}, {\em A hybrid high-order locking-free method
  for linear elasticity on general meshes}, Computer Methods in Applied
  Mechanics and Engineering, 283 (2015), pp.~1--21.

\bibitem{Di-Pietro;Ern;Lemaire:2014arbitrary-order}
{\sc D.~A. Di~Pietro, A.~Ern, and S.~Lemaire}, {\em An arbitrary-order and
  compact-stencil discretization of diffusion on general meshes based on local
  reconstruction operators}, Computational Methods in Applied Mathematics, 14
  (2014), pp.~461--472.

\bibitem{Dolbow;Belytschko:1999finite}
{\sc J.~Dolbow and T.~Belytschko}, {\em A finite element method for crack
  growth without remeshing}, Int. J. Numer. Meth. Engng, 46 (1999),
  pp.~131--150.

\bibitem{Gillette;Rand;Bajaj:2012Estimates}
{\sc A.~Gillette, A.~Rand, and C.~Bajaj}, {\em {Error Estimates for Generalized
  Barycentric Interpolation.}}, Advances in computational mathematics, 37
  (2012), pp.~417--439.

\bibitem{Haslinger;Renard:2009fictitious}
{\sc J.~Haslinger and Y.~Renard}, {\em A new fictitious domain approach
  inspired by the extended finite element method}, SIAM Journal on Numerical
  Analysis, 47 (2009), pp.~1474--1499.

\bibitem{Li;Melenk;Wohlmuth;Zou:2010Optimal}
{\sc J.~Li, J.~M. Melenk, B.~Wohlmuth, and J.~Zou}, {\em Optimal a priori
  estimates for higher order finite elements for elliptic interface problems},
  Applied numerical mathematics, 60 (2010), pp.~19--37.

\bibitem{Li;Lin;Wu:2003Cartesian}
{\sc Z.~Li, T.~Lin, and X.~Wu}, {\em New cartesian grid methods for interface
  problems using the finite element formulation}, Numerische Mathematik, 96
  (2003), pp.~61--98.

\bibitem{Mascotto:2018Ill-conditioning}
{\sc L.~Mascotto}, {\em Ill-conditioning in the virtual element method:
  Stabilizations and bases}, Numerical Methods for Partial Differential
  Equations, 34 (2018), pp.~1258--1281.

\bibitem{Mu;Wang;Ye:2015Galerkin}
{\sc L.~Mu, J.~Wang, and X.~Ye}, {\em {A weak Galerkin finite element method
  with polynomial reduction}}, Journal of Computational and Applied
  Mathematics, 285 (2015), pp.~45--58.

\bibitem{Payne;Weinberger:1960optimal}
{\sc L.~E. Payne and H.~F. Weinberger}, {\em {An optimal Poincar{\'{e}}
  inequality for convex domains}}, Archive for Rational Mechanics and Analysis,
  5 (1960), pp.~286--292.

\bibitem{Stein:1970Singular}
{\sc E.~M. Stein}, {\em Singular Integrals and Differentiability Properties of
  Functions}, vol.~2, Princeton University Press, 1970.

\bibitem{Sukumar;Moes;Moran;Belytschko:2000Extended}
{\sc N.~Sukumar, N.~Mo{\"e}s, B.~Moran, and T.~Belytschko}, {\em Extended
  finite element method for three-dimensional crack modelling}, International
  journal for numerical methods in engineering, 48 (2000), pp.~1549--1570.

\bibitem{Wang;Ye:2014Galerkin}
{\sc J.~Wang and X.~Ye}, {\em A weak {G}alerkin mixed finite element method for
  second order elliptic problems}, Mathematics of Computation, 83 (2014),
  pp.~2101--2126.

\bibitem{Wriggers;Rust;Reddy:2016virtual}
{\sc P.~Wriggers, W.~Rust, and B.~Reddy}, {\em A virtual element method for
  contact}, Computational Mechanics, 58 (2016), pp.~1039--1050.

\end{thebibliography}

\end{document}